\documentclass[a4paper,12pt]{amsart}

\usepackage{amsmath}
\usepackage[latin1]{inputenc}
\usepackage{amsfonts}
\usepackage{amssymb}
\usepackage{geometry}
\usepackage{xcolor}
\geometry{paper=a4paper,left=3cm,right=3cm,top=3.5cm,bottom=4cm}

\newtheorem{thm}{Theorem}[section]
\newtheorem{cor}[thm]{Corollary}
\newtheorem{lem}[thm]{Lemma}

\newtheorem{defn}[thm]{Definition}
\newtheorem{rem}[thm]{Remark}

\numberwithin{equation}{section}

\def\R{\mathbb R}

\def\b{\mathrm{b}}
\def\a{a}
\def\B{B}

\newcommand{\mand}{\quad\text{and}\quad}

\title[General Reiteration Theorems for ${\mathcal R}$ and ${\mathcal L}$ classes]
{General Reiteration Theorems for ${\mathcal R}$ and ${\mathcal L}$ classes:
case of right ${\mathcal R}$-spaces and left ${\mathcal L}$-spaces}

\author[Fern\'andez-Mart\'{\i}nez]{Pedro Fern\'andez-Mart\'{\i}nez}
\address[Pedro Fern\'andez-Mart\'{\i}nez and Teresa M. Signes]{Departamento de Matem\'aticas \\
Facultad de Matemáticas \\ Universidad de Murcia \\ Campus de
Espinardo \\ 30071 Espinardo (Murcia), Spain}

\author[Signes]{Teresa M. Signes}

%\thanks{The second author would like to thank the Isaac  Newton Institute for Mathematical Sciences, Cambridge, for support and hospitality during the  programme ``Approximation, Sampling and Compression in Data Science'' where the work on this paper was undertaken. This work was supported by EPSRC grand no. EP/R014604/1. The authors have been supported in part by MTM2017-84058-P (AEI/FEDER, UE)}
\date{\today}
%\subjclass[2010]{ 46B70, 46E30, 26A12}

\keywords{Real interpolation, $K$-functional, reiteration theorems, slowly varying functions, rearrangement invariant spaces.}
\begin{document}
\begin{abstract}
Given $E_0, E_1, E, F$ rearrangement invariant spaces, $\a, \b, \b_0, \b_1$ slowly varying functions and $0\leq \theta_0<\theta_1\leq 1$, we characterize the interpolation space
$$(\overline{X}_{\theta_0,\b_0,E_0}, \overline{X}^{\mathcal R}_{\theta_1, \b_1,E_1,\a,F})_{\theta,\b,E}\mand
(\overline{X}^{\mathcal L}_{\theta_0, \b_0,E_0,\a,F}, \overline{X}_{\theta_1,\b_1,E_1})_{\theta,\b,E},$$
for all possible values of $\theta\in[0,1]$. Applications to interpolation identities for grand and small Lebesgue spaces, Gamma spaces and $A$ and $B$-type spaces are given.
\end{abstract}
\maketitle

\section{Introduction}\label{introduction}

Reiteration theorems are important results in Interpolation Theory. These results not only assure the interpolation process is stable under reiteration, but also they are very useful identifying interpolation spaces.
The classical results can be found in  the monographs \cite{Bennett-Sharpley,Bergh-Lofstrom,B-K,triebel}. Additionally, there is an extensive literature concerning explicit reiteration formulae in various special cases, see e.g. \cite{AEEK,Ah1,ALM,CSe-2,Do,EOP,GOT,Mi,Nill}.

This paper is the second of a series in which we study reiteration results for couples formed by the spaces
\begin{equation}\label{a0}
\overline{X}_{\theta,\b,E},\  \overline{X}^{\mathcal R}_{\theta,\b,E,\a,F},\  \overline{X}^{\mathcal L}_{\theta,\b,E,\a,F}.  
\end{equation}
Here, $0 \leq \theta \leq 1$, $a$ and $b$ are slowly varying functions and, finally, $E$ and $F$ are rearrangement invariant function  (r.i.)  spaces. In fact,   given a couple  $\overline{X}=(X_0,X_1)$,  these spaces are defined as 
$$
 \overline{X}_{\theta,\b,E}=\Big\{f\in X_0+X_1\;\colon\;
\big \| t^{-\theta} {\b}(t) K(t,f) \big \|_{\widetilde{E}} < \infty\Big\},
$$
$$\overline{X}_{\theta,\b,E,a,F}^{\mathcal L}=\{f\in X_0+X_1\;\colon\;
\Big\|\b(t) \|s^{-\theta} a(s) K(s,f) \|_{\widetilde{F}(0,t)}\Big\|_{\widetilde{E}} < \infty\}$$
and 
$$\overline{X}_{\theta,\b,E,a,F}^{\mathcal R}=\{f\in X_0+X_1\;\colon\; \Big \|  \b(t) \|   s^{-\theta} a(s) K(s,f) \|_{\widetilde{F}(t,\infty)}      \Big   \|_{\widetilde{E}}<\infty\}.$$

\ 

In the previous paper \cite{FMS-RL1}, we identified the interpolation spaces
\begin{equation}\label{a1}
\Big(\overline{X}^{\mathcal R}_{\theta_0, \b_0,E_0,\a,F}, \overline{X}_{\theta_1,\b_1,E_1}\Big)_{\theta,\b,E}\mand 
\Big(\overline{X}_{\theta_0,\b_0,E_0}, \overline{X}^{\mathcal L}_{\theta_1, \b_1,E_1,\a,F}\Big)_{\theta,\b,E}.
\end{equation} 
This time we shall focus in the ``dual'' situation
\begin{equation}\label{a2}
\Big(\overline{X}_{\theta_0,\b_0,E_0}, \overline{X}^{\mathcal R}_{\theta_1, \b_1,E_1,\a,F}\Big)_{\theta,\b,E}\mand
\Big(\overline{X}^{\mathcal L}_{\theta_0, \b_0,E_0,\a,F}, \overline{X}_{\theta_1,\b_1,E_1}\Big)_{\theta,\b,E},
\end{equation}
when $0\leq \theta_0<\theta_1\leq 1$, $0\leq \theta\leq 1$, $\a, \b, \b_0, \b_1$  are slowly varying functions and $E_0, E_1, E, F$ are r.i.\!  spaces.  

\

We remark that the identities in \eqref{a2} do not follow from those in \eqref{a1} by a usual  ``symmetry'' argument
 (i.e., interchanging $X_0$ and $X_1$), since such argument would not preserve the condition $0\leq \theta_0<\theta_1\leq1$, which is crucial in  the identification of the spaces  in \eqref{a1}. 
Moreover, in the limiting cases $\theta=0,\ 1$ the reiteration spaces obtained in \eqref{a2} will no longer belong to the scales in \eqref{a0}.
In fact, new interpolation functors
\begin{equation}\label{a3}
\overline{X}^{\mathcal R,\mathcal R}_{\theta,c,E,\b,E,\a,G}\mand  \overline{X}^{\mathcal L,\mathcal L}_{\theta,c,E,\b,F,\a,G}
\end{equation}
will be needed; see Definition \ref{defLRR}.

\ 

As in \cite{FMS-RL1}, a motivation  for this study arises from various recent applications (see \cite{AFH,AFFGR,FFGKR})
to the so-called
 \textit{grand} and \textit{small Lebesgue} spaces \[
L^{p),\alpha}\mand L^{(p,\alpha}, \quad \mbox{$1<p<\infty$, \;$\alpha>0$;}
\]
see Definition \ref{def_gLp} below.  As
observed in \cite{FK, op1}, one can write
\[
L^{p),\alpha}=(L_1,L_\infty)^{\mathcal R}_{1-\frac1p,\ell^{-\frac{\alpha}{p}}(t),L_\infty,1,L_p} \mand
L^{(p,\alpha}=(L_1,L_\infty)^{\mathcal L}_{1-\frac1p,\ell^{-\frac{\alpha}{p}+\alpha-1}(t),L_1,1,L_p}\;,
\]
with $\ell(t)=1+|\log(t)|$, $t\in(0,1)$. 
So, \eqref{a2} allows in particular to identify 
the  interpolation spaces
$$\big(L_{p_0},L^{p_1),\alpha}\big)_{\theta,\b,E}\quad\mbox{and}\quad \big(L^{(p_0,\alpha},L_{p_1}\big)_{\theta,\b,E}.$$
for $1\leq p_0<p_1\leq \infty$, $\alpha>0$, and $0\leq \theta\leq 1$. Moreover, using additionally reiteration results from \cite{FMS-2} or using limiting cases for $\theta_0$ and $\theta_1$ in \eqref{a2}, one computes as well the pairs 
$$\big(L^{(p_0,\alpha},L^{p_1),\beta}\big)_{\theta,\b,E},\quad \big(L\log L,L^{p_1),\beta}\big)_{\theta,\b,E}\mand \big(L^{(p_0,\alpha},L_{\exp}\big)_{\theta,\b,E},$$ 
see Theorem \ref{thm58} and Corollaries \ref{714}, \ref{712} below.

\

Some of these special cases are contained in the recent papers \cite{AFH,AFFGR,FFGKR},
together with other interpolation formulae for pairs involving grand or small Lebesgue (Lorentz) spaces. 
Our goal here is to present a unified study for such identities
in the setting of general couples $\overline{X}=(X_0,X_1)$ of (quasi-) Banach spaces, 
and arbitrary parameters $\b$ and $E$. 
This point of view, besides being more general, also produces new formulas compared to \cite{AFH,AFFGR,FFGKR},
and allows to apply the results to  other situations, such as \textit{Gamma} spaces and $A$ and $B$-type spaces; see \S\ref{applications} below.

\

As in \cite{FMS-RL1}, the proofs use a direct approach, which closely follows the classical methods of reiteration. 
The main point is to obtain Holmstedt type formulae for the interpolation couples described above; 
these can  be seen as quantitative forms of the reiteration theorems. 
We also make use of standard techniques that already appeared in \cite{FMS-RL1}, such as Hardy type inequalities in the context of 
r.i. spaces and slowly varying functions (see \S\ref{lemmas}),
and an estimate that is specific of this situation (Lemma \ref{thmFMS-4}). 

\

The paper is organized as follows. In Section 2 we recall basic concepts regarding rearrangement invariant spaces and  slowly varying functions.  We also describe the  interpolation methods we shall work with, namely $\overline{X}_{\theta,\b,E}$, the ${\mathcal R}$ and ${\mathcal L}$spaces, $\overline{X}^{\mathcal R}_{\theta,\b,E,a,F}$, $\overline{X}^{\mathcal L}_{\theta,\b,E,a,F}$, and the new constructions in \eqref{a3}.
Generalized Holmstedt type formulae for the $K$-functional  of the couples involved  can be found in Section 3.
The reiteration results appear in Section 4 and finally Section 5 is devoted to applications.

\section{Preliminaries} \label{preliminaries}

We refer to the monographs \cite{Bennett-Sharpley,Bergh-Lofstrom,B-K,KPS,triebel} for the  basic concepts  and facts on Interpolation Theory and Banach function spaces. A Banach function space $E$ on $(0,\infty)$ is called \textit{rearrangement invariant}  (r.i.) if, for any two measurable functions $f$, $g$,
\begin{equation*}
g\in E \text{ and } f^*\leq g^*  \Longrightarrow f\in E \text{ and } \|f\|_E\leq \|g\|_E,
\end{equation*}
where $f^*$ and $g^*$ stand for the decreasing rearrangements of $f$ and $g$.
Following  \cite{Bennett-Sharpley},  we assume that every Banach function space $E$ enjoys the \textit{Fatou property}. Under this assumption  every r.i. space
$E$ can be obtained by applying an exact interpolation method
to the couple $(L_{1}, L_{\infty})$.

Along this paper we will handle two different measures on $(0,\infty)$; the usual Lebesgue measure and the homogeneous measure $\nu(A)=\int _0^\infty
\chi_A(t)\tfrac{dt}{t}$.
We use a tilde to denote rearrangement invariant spaces with respect to the second measure.
For example
$$  \|f\|_{\widetilde{L}_{1}} = \int_{0}^{\infty}  |f(t)|  \frac{dt}{t}\quad\mbox{and}\quad \|f\|_{\widetilde{L}_\infty}=\|f\|_{L_\infty}.$$
If  the space $E$ is obtained by applying the interpolation functor $\mathcal{F}$ on the couple $(L_{1}, L_{\infty})$,  $E=\mathcal{F}(L_1,L_\infty)$,
then we write $\widetilde{E}=\mathcal{F}(\widetilde{L}_{1}, L_{\infty})$.

Sometimes we will need to restrict the space to some partial interval $(a,b)\subset (0,\infty)$.
Then we will use the notation $E(a,b)$ and $\widetilde{E}(a,b)$. The norms in $E$ and $E(a,b)$ are related by: $\|f\|_{E(a,b)}=\|f(t)\chi_{(a,b)}(t)\|_{E}$.

For  two (quasi-) Banach spaces  $X$ and $Y$, we write $X\hookrightarrow Y$ if $Y\subset X$ and the natural embedding is continuous. The symbol  $X=Y$ means that $X\hookrightarrow Y$ and $Y\hookrightarrow X$.

Let $A$ and $B$ be two non-negative quantities depending on certain parameters.  We write $A\lesssim B$ if there is a constant $c>0$, independent of the parameters involved in $A$ and $B$, such that  $A\leq cB$.  If $A\lesssim B$ and $B\lesssim A$,  we say that $A$ and $B$ are equivalent and write $A\sim B$.

\subsection{Slowly varying functions}\hspace{1mm}

\vspace{2mm}
In this subsection we recall the definition and basic properties of \textit{slowly varying functions}.  See  \cite{BGT,Matu}.

\begin{defn}\label{def1}
A positive Lebesgue measurable function $\b$, $0\not\equiv\b\not\equiv\infty$,
is said to be \textit{slowly varying} on $(0,\infty)$ (notation $\b\in SV$) if, for each $\varepsilon>0$, the function $t \leadsto t^\varepsilon\b(t)$  is equi\-va\-lent to a non-decreasing function on $(0,\infty)$
and $ t \leadsto t^{-\varepsilon}\b(t)$ is equivalent to a non-increasing function on $(0,\infty)$.
\end{defn}

Examples of $SV$-functions include powers of logarithms,
$$\ell^\alpha(t)=(1+|\log t|)^\alpha,\quad t>0,\quad \alpha\in\R,$$ ``broken" logarithmic functions defined as
\begin{equation}\label{ellA}
\ell^{(\alpha, \beta)}(t)=\left\{\begin{array}{ll}
\ell^\alpha(t),& 0<t\leq1 \\
\ell^\beta(t)
,& t>1
\end{array}\right.,\quad (\alpha, \beta)\in\R^2,
\end{equation}
reiterated logarithms $(\ell\circ\ldots\circ\ell)^\alpha (t),\ \alpha\in\R,\ t>0$ and also the family of functions  $\exp(|\log t|^\alpha), \ \alpha \in (0,1), \; t >0$. 

In the following lemmas we summarized some of the basic properties of slowly varying functions.

\begin{lem}\label{lem0}
Let $\b, \b_1, \b_2\in SV$.
\begin{itemize}
\item[(i)]  Then $\b_1\b_2\in SV$, $\b(1/t)\in SV$ and $\b^r\in SV$ for all $r\in\R$.
\item[(ii)] If $\alpha>0$, then $\b(t^\alpha \b_1(t))\in SV$.
\item[(iii)] If $\epsilon,s>0$ then there are positive constants $c_\epsilon$ and $C_\epsilon$ such that
$$c_\epsilon\min\{s^{-\epsilon},s^\epsilon\}\b(t)\leq \b(st)\leq C_\epsilon \max\{s^\epsilon,s^{-\epsilon}\}\b(t)\quad \mbox{for every}\  t>0.$$
\end{itemize}
\end{lem}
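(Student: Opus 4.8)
The statement to be proved is Lemma~\ref{lem0}, which collects three basic properties of slowly varying functions. Let me sketch proofs for the three parts.

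\medskip

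\textbf{Plan of proof.} The strategy throughout is to translate the defining equivalences of $SV$ — that $t\mapsto t^\varepsilon\b(t)$ is equivalent to a non-decreasing function and $t\mapsto t^{-\varepsilon}\b(t)$ is equivalent to a non-increasing function, for \emph{every} $\varepsilon>0$ — into usable multiplicative estimates, and then combine them.

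\medskip

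For part (i), closure under products is immediate: if $t^{\varepsilon/2}\b_i(t)\sim$ (non-decreasing) for $i=1,2$, then multiplying gives $t^\varepsilon\b_1(t)\b_2(t)$ equivalent to a product of non-decreasing functions, hence to a non-decreasing function; symmetrically for the non-increasing side with exponent $-\varepsilon$. For $\b(1/t)$, note that $g(t)$ non-decreasing $\iff g(1/t)$ non-increasing, so $t^\varepsilon\b(1/t)=\big((1/t)^{-\varepsilon}\b(1/t)\big)$ is, after the substitution $s=1/t$, the reflection of $s^{-\varepsilon}\b(s)$ which is equivalent to non-increasing, hence its reflection is equivalent to non-decreasing; and vice versa. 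For $\b^r$: if $r\ge 0$, raising $t^{\varepsilon/r}\b(t)\sim$(non-decreasing) to the $r$-th power preserves monotonicity and equivalence (the $r$-th power is monotone on the positive reals), giving $t^\varepsilon\b^r(t)$; handle $r<0$ by first applying the $r=-1$ case (which follows from $\b^{-1}$ having the roles of its two monotonicity conditions swapped) and then $|r|$. For part (ii), I set $g(t)=t^\alpha\b_1(t)$ and must check $t\mapsto t^\varepsilon\b(g(t))$ is equivalent to non-decreasing and $t\mapsto t^{-\varepsilon}\b(g(t))$ to non-increasing. The key observation is that $g$ is itself essentially increasing and behaves like a power: since $\b_1\in SV$, for small $\delta$ the function $t^{\alpha-\delta}\b_1(t)$ is equivalent to non-increasing and $t^{\alpha+\delta}\b_1(t)$ is equivalent to non-decreasing, so $g(t)$ satisfies $g(st)\lesssim \max\{s^{\alpha+\delta},s^{\alpha-\delta}\}\,g(t)$ type bounds; combining with the definition of $\b\in SV$ applied at the argument $g(t)$ and using that $\alpha-\delta>0$ for $\delta$ small yields the claim. (This is a standard argument; one essentially reduces to part~(iii) applied at the rescaled argument.)

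\medskip

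Part (iii) is the quantitative heart and the part I expect to require the most care. Fix $\varepsilon>0$ and $s>0$; I want $\b(st)\sim\b(t)$ up to factors $\max\{s^\varepsilon,s^{-\varepsilon}\}$. Consider first $s\ge 1$. Since $t\mapsto t^\varepsilon\b(t)$ is equivalent to non-decreasing, $(st)^\varepsilon\b(st)\gtrsim t^\varepsilon\b(t)$, i.e. $\b(st)\gtrsim s^{-\varepsilon}\b(t)$. Since $t\mapsto t^{-\varepsilon}\b(t)$ is equivalent to non-increasing, $(st)^{-\varepsilon}\b(st)\lesssim t^{-\varepsilon}\b(t)$, i.e. $\b(st)\lesssim s^{\varepsilon}\b(t)$. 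Thus for $s\ge 1$, $\;c_\varepsilon s^{-\varepsilon}\b(t)\le \b(st)\le C_\varepsilon s^{\varepsilon}\b(t)$, which is of the stated form since $s^{\varepsilon}=\max\{s^\varepsilon,s^{-\varepsilon}\}$ and $s^{-\varepsilon}=\min\{s^\varepsilon,s^{-\varepsilon}\}$. For $0<s<1$, apply the case just proved with $s$ replaced by $1/s\ge 1$ and $t$ replaced by $st$, which gives $\b(t)=\b\big(\tfrac1s\cdot st\big)$ sandwiched between $c_\varepsilon s^{\varepsilon}\b(st)$ and $C_\varepsilon s^{-\varepsilon}\b(st)$; rearranging produces exactly the desired two-sided bound with $\max$ and $\min$ of $s^{\pm\varepsilon}$. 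The main obstacle here is purely bookkeeping: carefully tracking which monotonicity hypothesis gives which inequality for which range of $s$, and making sure the constants $c_\varepsilon,C_\varepsilon$ depend only on $\varepsilon$ (they come from the implied constants in the two equivalences in Definition~\ref{def1}, which are independent of $s$ and $t$). No deep idea is needed — just a disciplined case analysis on $s\gtrless 1$.
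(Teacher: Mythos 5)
The paper does not prove this lemma itself; it simply cites \cite{GOT} (and \cite{FMS-1} for the companion Lemma~2.3), so there is no internal proof to compare against, only the question of whether your argument is correct. Parts (i) and (iii) are correct and essentially complete: in (iii) the case split $s\gtrless 1$, the use of the non-decreasing equivalence for the lower bound and the non-increasing one for the upper bound, and the reduction of $0<s<1$ to the $s\ge1$ case via $(s,t)\mapsto(1/s,st)$ are all sound, and the constants indeed depend only on $\varepsilon$ through the constants in Definition~\ref{def1}. In part (ii) there is one factual slip: you assert that for small $\delta$ the map $t\mapsto t^{\alpha-\delta}\b_1(t)$ is equivalent to \emph{non-increasing}, but since $\alpha-\delta>0$ for $\delta<\alpha$ it is in fact equivalent to \emph{non-decreasing}; what you actually use, and what is true, is the two-sided estimate $g(st)\asymp s^{\alpha}\b_1(st)/\b_1(t)\cdot g(t)$, which by applying (iii) to $\b_1$ gives $c\,\min\{s^{\alpha+\delta},s^{\alpha-\delta}\}\,g(t)\le g(st)\le C\,\max\{s^{\alpha+\delta},s^{\alpha-\delta}\}\,g(t)$. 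Your parenthetical ``reduce to (iii) at the rescaled argument'' is the right plan, but note that to verify membership in $SV$ you must establish, for each $\varepsilon>0$, that $t^{\varepsilon}\b(g(t))$ is \emph{equivalent to a non-decreasing function} (and the analogous non-increasing statement), not merely a pointwise multiplicative bound; this requires, for $\tau\le t$ and $s=t/\tau\ge1$, writing $g(t)=r\,g(\tau)$ with $c\,s^{\alpha-\delta}\le r\le C\,s^{\alpha+\delta}$, applying (iii) to $\b$ with a parameter $\eta$ chosen so that $\eta(\alpha+\delta)\le\varepsilon$, and checking that the losses in the constants are absorbed by $s^{\varepsilon}$. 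That argument does close, so your proposal is correct in substance, but (ii) as written is too compressed and contains the sign error noted above.
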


\vspace{2mm}
\begin{lem} \label{lem1}
Let $E$ be an r.i. space on $(0,\infty)$ and $\b\in SV$.
\begin{itemize}
\item[(i)] If $\alpha>0$, then, for all $t>0$,
$$\|s^\alpha\b(s)\|_{\widetilde{E}(0,t)}\sim t^\alpha\b(t)\quad \mbox{ and } \quad
\|s^{-\alpha}\b(s)\|_{\widetilde{E}(t,\infty)}\sim t^{-\alpha}\b(t).$$
\item[(ii)] The following functions belong to $SV$
$$\B_0(t):=\|\b\|_{\widetilde{E}(0,t)}
\quad \text{and}\quad
\B_\infty(t):=\|\b\|_{\widetilde{E}(t,\infty)}, \quad t >0.$$
\item[(iii)] For all $t >0$,
$$\b(t) \lesssim \| \b \|_{\widetilde{E}(0,t)} \quad \text{and} \quad
\b(t) \lesssim \| \b \|_{\widetilde{E}(t,\infty)}.$$
\end{itemize}
\end{lem}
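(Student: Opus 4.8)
The plan is to prove the three items in order, each reducing to the defining property of slowly varying functions (Definition 2.3) together with Lemma 2.5(iii).

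For item (i), I would treat the two equivalences separately but symmetrically. To show $\|s^\alpha\mathrm{b}(s)\|_{\widetilde E(0,t)}\sim t^\alpha\mathrm{b}(t)$, I first get the lower bound from Lemma 2.7(iii) (which in the final paper is proved independently, or directly: the norm on $(0,t)$ dominates $t^\alpha\mathrm{b}(t)$ times $\|\chi_{(0,t)}\|_{\widetilde E}$-type quantities, but more cleanly one uses monotonicity on a subinterval $(t/2,t)$). For the upper bound, fix $\varepsilon\in(0,\alpha)$; by Definition 2.3 the function $s\leadsto s^{-\varepsilon}\mathrm{b}(s)$ is equivalent to a non-increasing function, so for $s\in(0,t)$ we have $\mathrm{b}(s)\lesssim (s/t)^{-\varepsilon}\mathrm{b}(t)$, whence $s^\alpha\mathrm{b}(s)\lesssim t^\varepsilon\mathrm{b}(t)\,s^{\alpha-\varepsilon}$. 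Then $\|s^\alpha\mathrm{b}(s)\|_{\widetilde E(0,t)}\lesssim t^\varepsilon\mathrm{b}(t)\,\|s^{\alpha-\varepsilon}\|_{\widetilde E(0,t)}$, and since $\widetilde E$ is r.i.\ with the homogeneous measure, $\|s^{\alpha-\varepsilon}\|_{\widetilde E(0,t)}\sim t^{\alpha-\varepsilon}$ (this is the well-known fact that $\|s^\beta\|_{\widetilde E(0,t)}\sim t^\beta$ for $\beta>0$, coming from a change of variables $s\mapsto ts$ and $\|1\|_{\widetilde E(0,1)}<\infty$; the finiteness uses $\widetilde E\supset \widetilde L_1\cap L_\infty$ on bounded-away-from-$0$-and-$\infty$ sets, or the Fatou-property normalization). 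Multiplying, the upper bound $\lesssim t^\alpha\mathrm{b}(t)$ follows. The second equivalence is identical with $(0,t)$ replaced by $(t,\infty)$ and the roles of the two monotonicity conditions swapped, using $\|s^{-\beta}\|_{\widetilde E(t,\infty)}\sim t^{-\beta}$ for $\beta>0$.

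For item (ii), I must check that $\mathrm{B}_0(t)=\|\mathrm{b}\|_{\widetilde E(0,t)}$ and $\mathrm{B}_\infty(t)=\|\mathrm{b}\|_{\widetilde E(t,\infty)}$ lie in $SV$, i.e.\ that for each $\varepsilon>0$ the functions $t^\varepsilon\mathrm{B}_0(t)$ and $t^{-\varepsilon}\mathrm{B}_0(t)$ are equivalent to monotone functions of the appropriate type (and similarly for $\mathrm{B}_\infty$). The key observation is that $\mathrm{B}_0$ is automatically non-decreasing and $\mathrm{B}_\infty$ non-decreasing in the relevant sense once one inserts a power: writing $t^\varepsilon\mathrm{b}(s)=t^\varepsilon s^{-\varepsilon}\cdot s^{\varepsilon}\mathrm{b}(s)$ and using that $s^\varepsilon\mathrm b(s)$ is (equivalent to) non-decreasing, one shows $t^\varepsilon\mathrm B_0(t)=\|t^\varepsilon\mathrm b(s)\|_{\widetilde E(0,t)}$ is equivalent to a non-decreasing function by a direct monotonicity-in-the-domain argument combined with the dilation estimate $\|g(s)\|_{\widetilde E(0,\lambda t)}$ vs $\|g(\lambda s)\|_{\widetilde E(0,t)}$; for $t^{-\varepsilon}\mathrm B_0(t)$ one argues it is equivalent to non-increasing in the same way using that $s^{-\varepsilon}\mathrm b(s)$ is equivalent to non-increasing. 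An efficient alternative, which I would likely adopt to keep the writing short, is to invoke item (i): since for any $\varepsilon>0$ one has $\mathrm b\sim s^{\varepsilon}(s^{-\varepsilon}\mathrm b(s))$ with $s^{-\varepsilon}\mathrm b(s)\in SV$, Lemma 2.7(i) is not quite directly applicable because the exponent sign inside changes on $(0,t)$; so instead I would just verify the two monotonicity requirements of Definition 2.3 by hand using Lemma 2.5(iii) (the dilation bounds $\mathrm b(\sigma t)\lesssim\max\{\sigma^\delta,\sigma^{-\delta}\}\mathrm b(t)$) together with the r.i.\ dilation behaviour of $\widetilde E$.

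For item (iii), the inequality $\mathrm b(t)\lesssim\|\mathrm b\|_{\widetilde E(0,t)}$ is obtained by restricting the norm to the subinterval $(t/2,t)$ and using that on this interval $\mathrm b(s)\sim\mathrm b(t)$ by Lemma 2.5(iii) (take $\epsilon$ fixed, $s=\sigma t$ with $\sigma\in(1/2,1)$), hence $\|\mathrm b\|_{\widetilde E(0,t)}\geq\|\mathrm b\|_{\widetilde E(t/2,t)}\gtrsim\mathrm b(t)\|1\|_{\widetilde E(t/2,t)}\sim\mathrm b(t)$, the last equivalence since $\|\chi_{(t/2,t)}\|_{\widetilde E}=\|\chi_{(1/2,1)}\|_{\widetilde E}$ is a positive constant by scale-invariance of $\nu$. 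The estimate with $(t,\infty)$ is identical using the subinterval $(t,2t)$.

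\medskip

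The main obstacle is item (ii): the content is not in any single clever step but in correctly packaging the interplay between \emph{monotonicity in the domain of integration} (which makes $\mathrm B_0$ increasing and $\mathrm B_\infty$ decreasing outright) and \emph{monotonicity of the integrand up to a power} (the $SV$ property of $\mathrm b$), all while respecting that $\widetilde E$ is only an abstract r.i.\ space, so the only tools available are the lattice/rearrangement property, the Fatou property, and the homogeneity of $\nu$ (equivalently, the dilation operator acting boundedly with the expected norm on $\widetilde E$). One must be careful that the hidden constants stay uniform in $t$; this is exactly where Lemma 2.5(iii) and the scale-invariance $\|f(s)\|_{\widetilde E}=\|f(\lambda s)\|_{\widetilde E}$ do the work. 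Items (i) and (iii) are then short corollaries of the same circle of ideas.
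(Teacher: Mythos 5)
The paper does not actually prove Lemma~\ref{lem1}; it only cites \cite{FMS-1} for it (``We refer to \cite{GOT,FMS-1} for the proof of Lemma~\ref{lem0} and~\ref{lem1}, respectively''). So there is no in-paper proof to compare against, and I assess your argument on its own terms: it is correct and self-contained, and it identifies the right tools, namely the lattice property, the dilation invariance $\|f(\lambda\cdot)\|_{\widetilde E}=\|f\|_{\widetilde E}$ (coming from the homogeneity of $\nu=dt/t$) and the $SV$-growth bound of Lemma~\ref{lem0}(iii). Your treatment of item~(ii), although only sketched, contains the essential observation: $\B_0$ is trivially non-decreasing, so $t^\varepsilon\B_0(t)$ is non-decreasing for free, while the almost-monotonicity of $t^{-\varepsilon}\B_0(t)$ follows from $\B_0(\lambda t)=\|\b(\lambda s)\|_{\widetilde E(0,t)}\lesssim\lambda^\delta\B_0(t)$ for $\lambda>1$ and any $\delta<\varepsilon$; the same argument, mirrored, handles $\B_\infty$. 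Item~(iii) via the subinterval $(t/2,t)$ is clean and correct.

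One small slip in item~(i): to obtain the upper bound $\b(s)\lesssim (t/s)^{\varepsilon}\b(t)$ for $s\in(0,t)$ you should invoke that $s\leadsto s^{\varepsilon}\b(s)$ is equivalent to a \emph{non-decreasing} function, not that $s\leadsto s^{-\varepsilon}\b(s)$ is equivalent to a non-increasing one; the latter gives the reverse-type estimate $\b(s)\gtrsim(s/t)^{\varepsilon}\b(t)$. The inequality you write down is nonetheless correct, and the rest of the upper-bound argument (lattice property plus $\|s^{\alpha-\varepsilon}\|_{\widetilde E(0,t)}\sim t^{\alpha-\varepsilon}$ by dilation, the implied constant being $\|s^{\alpha-\varepsilon}\|_{\widetilde E(0,1)}<\infty$ since $s^{\alpha-\varepsilon}\in\widetilde L_1(0,1)\cap L_\infty(0,1)\hookrightarrow\widetilde E(0,1)$) goes through. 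Also note that in the lower bound you cannot literally apply item~(iii) to $s^\alpha\b(s)$ (which is not slowly varying); but the subinterval argument you offer as the ``cleaner'' route is exactly right and should be the one you keep.
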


We refer to \cite{GOT,FMS-1} for the proof of Lemma \ref{lem0} and \ref{lem1}, respectively.
\begin{rem}\label{rem23}
\textup{ The property (iii) of Lemma \ref{lem0} implies that if $\b\in SV$ is such that $\b(t_0)=0$ ($\b(t_0)=\infty$) for some $t_0>0$, then $\b\equiv0$ ($\b\equiv\infty$). Thus, by Lemma \ref{lem1} (ii), if $\|\b\|_{\widetilde{E}(0,1)}<\infty$ then
$\|\b\|_{\widetilde{E}(0,t)}<\infty$ for all $t>0$, and if $\|\b\|_{\widetilde{E}(1,\infty)}<\infty$ then
$\|\b\|_{\widetilde{E}(t,\infty)}<\infty$ for all $t>0$.}

\textup{Moreover, if $f\sim g$ then, using Definition \ref{def1} and Lemma \ref{lem0} (iii), one can show that  $\b\circ f\sim \b\circ g$ for any $b\in SV$.}
\end{rem}

\subsection{Interpolation Methods} \label{interpolation methods}\hspace{2mm}

\vspace{2mm}
In what follows $\overline{X}=(X_{0}, X_{1})$ will be a \textit{compatible (quasi-) Banach couple}, that is, two  (quasi-) Banach spaces continuously embedded in a Hausdorff topological vector space. The \textit{Peetre $K$-functional} $K(t,f)\equiv K(t,f;X_0,X_1)$ is defined for $f\in X_0+X_1$ and $t>0$ by
 \begin{align*}
K(t,f;X_0,X_1)=\inf \Big \{\|f_0\|_{X_0}+t\|f_1\|_{X_1}:\ f=f_0+f_1,\ f_i\in X_i , \; i=0,1
\Big \}.
\end{align*}
It is known that the function $t\rightsquigarrow K(t,f)$ is non-decreasing, while $t\rightsquigarrow t^{-1}K(t,f)$, $t>0$, is non-increasing.  Other important property of the $K$-functional is the fact that 
\begin{equation}\label{eKK}
K(t,f;X_0,X_1)=tK(t^{-1},f;X_1,X_0)\quad \mbox{for all}\ t>0,
\end{equation}
(see \cite[Chap. 5, Proposition 1.2]{Bennett-Sharpley}).

Now, we recall the definition and some properties of the real interpolation method  $\overline{X}_{\theta,\b,E}$ and of the limiting $\mathcal{L}$ and $\mathcal{R}$ constructions. See \cite{FMS-1}  for the proof of the results of this subsection.

\begin{defn}\label{defrealmethod}
Let $E$ be an r.i.  space,  $\b\in SV$ and $0\leq\theta \leq 1$. The real interpolation space $\overline{X}_{\theta,\b,E}\equiv(X_0,X_1)_{\theta, \b, E}$ consists of all $f$ in $X_{0} + X_{1}$ that satisfy
$$ \|f\|_{\theta,\b,E} := \big \| t^{-\theta} {\b}(t) K(t,f) \big \|_{\widetilde{E}} < \infty.$$
\end{defn}

The space  $\overline{X}_{\theta, \b,E}$ is a (quasi-) Banach space, and is an intermediate space for the couple  $(X_0, X_1)$, that is,
$$X_0\cap X_1\hookrightarrow \overline{X}_{\theta, \b,E}\hookrightarrow X_0+X_1,$$
provided that one of the following conditions holds
$$ \left\{\begin{array}{cc}
\quad 0< \theta< 1&\\
\theta=0, & \|\b\|_{\widetilde{E}(1,\infty)}<\infty\\
\theta=1, & \|\b\|_{\widetilde{E}(0,1)}<\infty.
\end{array}\right.
$$
If none of the above conditions holds, then $\overline{X}_{\theta, \b,E}=\{0\}$.

When $E=L_q$ and $\b \equiv 1$, then $\overline{X}_{\theta,\b,E}$ coincides with the classical real interpolation space $\overline{X}_{\theta,q}$. We emphasize, however, that the spaces $\overline{X}_{\theta,b,E}$ are well defined even for the extremal values of the parameter $\theta =0$ and $\theta =1$. Thus, this scale contains extrapolation spaces in the sense of Milman~\cite{Mi} and Gómez and Milman \cite{Go-Mi}.  The interpolation spaces $\overline{X}_{\theta,\b,L_q}$ have been studied in detail by Gogatishvilli, Opic and Trebels in~\cite{GOT}, while the special cases $\overline{X}_{\theta,\ell^{(\alpha,\beta)}(t),L_q}$, see \eqref{ellA}, were considered earlier by  Evans, Opic and Pick in \cite{EOP} and by Evans and Opic in \cite{EO}. See also \cite{clku,Do,gustav,per,tesisalba}, among other articles.

In the next remark we collect some elementary estimates  that will be used in the rest of the paper.
\begin{rem}
\textup{ Using Lemma \ref{lem0} (iv) and the monotonicity of $t \leadsto K(t,\cdot)/t$,  it is easy to check that for $t>0$ and $f\in X_0+X_1$ 
\begin{equation}\label{e1}
t^{-\theta}\b(t)K(t,f)\lesssim\big \| s^{-\theta} {\b}(s) K(s,f) \big \|_{\widetilde{E}(0,t)},\qquad 0\leq \theta<1 \end{equation}
and
\begin{equation}\label{e2}
t^{-1}\|\b\|_{\widetilde{E}(0,t)}K(t,f)\lesssim\big \| s^{-1} {\b}(s) K(s,f) \big \|_{\widetilde{E}(0,t)}.\qquad\qquad\qquad\qquad\qquad
\end{equation}
Similarly, 
\begin{equation}\label{e3}
t^{-\theta}\b(t)K(t,f)\lesssim\big \| s^{-\theta} {\b}(s) K(s,f) \big \|_{\widetilde{E}(t,\infty)},\qquad 0<\theta\leq 1
\end{equation}
and for $\theta=0$
\begin{equation}\label{e4}
\|\b\|_{\widetilde{E}(t,\infty)}K(t,f)\lesssim\big \|{\b}(s) K(s,f) \big \|_{\widetilde{E}(t,\infty)}.\qquad\qquad\qquad\qquad
\end{equation}
It is worth to remark that by \eqref{e2} and \eqref{e3}, we have 
\begin{equation}\label{eK}
K(t,f)\lesssim \frac{t^\theta}{\b(t)}\|f\|_{\theta,\b,E},\qquad 0<\theta<1,
\end{equation}
for all $t>0$ and  $f\in \overline{X}_{\theta,\b,E}$. In the cases $\theta=0,1$ the estimate \eqref{eK} is also true if we replace $\b(t)$ by $\|\b\|_{\widetilde{E}(t,\infty)}$ or $\|\b\|_{\widetilde{E}(0,t)}$, respectively.}
\end{rem}

\begin{defn}\label{defLR}
 Let $E$, $F$ be two r.i. spaces, $a, \b\in SV$ and $0\leq \theta\leq 1$. The space $\overline{X}_{\theta,\b,E,a,F}^{\mathcal L}\equiv(X_0,X_1)_{ \theta,\b,E,a,F}^{\mathcal L}$ consists of all $f \in X_{0} + X_{1}$ for which 
$$ \|f \|_{\mathcal L;\theta,\b,E,a,F}:=
\Big\|\b(t) \|s^{-\theta} a(s) K(s,f) \|_{\widetilde{F}(0,t)}\Big\|_{\widetilde{E}} < \infty . $$
\end{defn}
This is  a (quasi-) Banach  intermediate space for the couple $\overline{X}$, 
$$ X_0\cap X_1 \hookrightarrow  \overline{X}_{\theta,\b,E,a,F}^{\mathcal L} \hookrightarrow X_0+X_1,$$
provided that 
$$\left\{\begin{array}{cl}
0 < \theta < 1& \|\b\|_{\widetilde{E}(1,\infty)} <\infty \\
\theta =0& \|\b\|_{\widetilde{E}(1,\infty)} \!<\infty,\quad \Big \| \b(t)\|a\|_{\widetilde{F}(1,t)}  \Big \|_{\widetilde{E}(1,\infty)} \!< \infty, \|\a\b\|_{\widetilde{E}(1,\infty)}<\infty\\
\theta =1& \|\b\|_{\widetilde{E}(1,\infty)} \!<\infty,\quad  \Big \| \b(t) \|a\|_{\widetilde{F}(0,t)} \Big \|_{\widetilde{E}(0,1)} \!< \infty.
\end{array}\right.
$$
If none of these conditions holds, then $\overline{X}^{\mathcal L}_{\theta, \b,E,a,F}=\{0\}$.

\begin{defn}
 Let $E$, $F$ be two r.i. spaces, $a, \b\in SV$ and $0\leq \theta\leq 1$. The space
$\overline{X}_{\theta,\b,E,a,F}^{\mathcal R}\equiv(X_0,X_1)_{\theta,\b,E,a,F}^{\mathcal R}$ consists of all $f\in X_0+X_1$ for which
$$\| f  \|_{\mathcal{R};\theta,\b,E,a,F} := \Big \|  \b(t) \|   s^{-\theta} a(s) K(s,f) \|_{\widetilde{F}(t,\infty)}      \Big   \|_{\widetilde{E}}<\infty.$$
\end{defn}

The space $\mathcal{R}$ is a (quasi)-Banach  intermediate space for the couple $\overline{X}$, that is,
$$ X_0\cap X_1 \hookrightarrow  \overline{X}_{\theta,\b,E,a,F}^{\mathcal R} \hookrightarrow X_0+X_1$$
provided that
$$\left\{\begin{array}{cl}
0 < \theta < 1& \|\b\|_{\widetilde{E}(0,1)} <\infty \\
\theta =0& \|\b\|_{\widetilde{E}(0,1)} \!<\infty,\quad \Big \| \b(t)\|a\|_{\widetilde{F}(t,\infty)}  \Big \|_{\widetilde{E}(1,\infty)} \!< \infty\\
\theta =1& \|\b\|_{\widetilde{E}(0,1)} \!<\infty,\quad  \Big \| \b(t) \|a\|_{\widetilde{F}(t,1)} \Big \|_{\widetilde{E}(0,1)} \!< \infty, \|\a\b\|_{\widetilde{E}(0,1)}<\infty.
\end{array}\right.
$$
If none of these conditions holds, then $\overline{X}^{\mathcal L}_{\theta, \b,E,a,F}$ is a trivial space.

The above definitions  generalize a previous notion by  Evans and Opic in \cite{EO}, in which $\a, \b$ are broken logarithms while $E, F$ remain within the classes $L_q$. Earlier versions of these spaces appeared in a paper  by Doktorskii \cite{Do}, with $a, \b$ powers of logarithms and  $(X_0,X_1)$ an ordered couple.  These spaces  also appear in  the work of  Gogatishvili, Opic and Trebels \cite{GOT} and Ahmed \textit{et al.} \cite{AEEK}. In all these cases the spaces $E$ and $F$ remain within the $L_q$ classes.

Some inclusions between the three constructions can be obtained 
using  inequalities  \eqref{e1}-\eqref{e4}. In fact,
\begin{equation}\label{ec18}
\begin{aligned}
 &\overline{X}_{\theta,\b,E,a,F}^{\mathcal L}  \hookrightarrow\overline{X}_{\theta,\b\a,E} & \text{ if }\ \ 0\leq \theta< 1 \\
 &\overline{X}_{1,\b,E,a,F}^{\mathcal L}  \hookrightarrow\overline{X}_{1,\b\|\a\|_{\widetilde{F}(0,t)},E} & \text{ if } \theta= 1,
\end{aligned}
\end{equation}
and
\begin{equation}\label{ec19}
\begin{aligned}
& \overline{X}_{\theta,\b,E,a,F}^{\mathcal R}  \hookrightarrow\overline{X}_{\theta,\b\a,E} & \text{ if } 0< \theta \leq 1 \\
&\overline{X}_{0,\b,E,a,F}^{\mathcal R}  \hookrightarrow\overline{X}_{0,\b\|\a\|_{\widetilde{F}(t,\infty)},E} & \text{ if } \theta =0.
\end{aligned}
\end{equation}
See \cite[Lemma 6.7]{FMS-1} for the identities in the  case $E=F=L_q$.

\

The following lemma shows the effects that changing the order of the spaces in the couple have on these  constructions.

\begin{lem}\label{symLR}
Let $E, F$ be r.i. spaces, $\a$, $\b\in SV$ and $0\leq\theta\leq1$. Then
$$(X_0,X_1)_{\theta,\b,E}=(X_1,X_0)_{1-\theta,\overline{\b},E},\quad (X_0,X_1)^{\mathcal{L}}_{\theta,\b,E,a,F}=(X_1,X_0)^{\mathcal{R}}_{1-\theta,\overline{\b},E,\overline{a},F},$$
where $\overline{a}(t)=a(1/t)$ and  $\overline{\b}(t)=\b(1/t)$,  $t>0$.
\end{lem}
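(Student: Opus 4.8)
The plan is to reduce everything to the single identity \eqref{eKK}, namely $K(t,f;X_0,X_1)=tK(t^{-1},f;X_1,X_0)$, and then carry out a change of variable $t\mapsto 1/t$ in each of the defining norms. The first identity, $(X_0,X_1)_{\theta,\b,E}=(X_1,X_0)_{1-\theta,\overline{\b},E}$, is the model case: starting from $\|t^{-\theta}\b(t)K(t,f;X_0,X_1)\|_{\widetilde E}$, I substitute \eqref{eKK} to get $\|t^{1-\theta}\b(t)K(t^{-1},f;X_1,X_0)\|_{\widetilde E}$, and then perform the substitution $s=1/t$. The key point is that the measure $\nu(A)=\int\chi_A(t)\,dt/t$ is invariant under $t\mapsto 1/t$, so $\|g(1/s)\|_{\widetilde E}=\|g(s)\|_{\widetilde E}$ for any r.i.\ space $E$ with respect to $\nu$; after the substitution the exponent $t^{1-\theta}$ becomes $s^{-(1-\theta)}$ and $\b(1/s)=\overline{\b}(s)$, which is exactly the norm defining $(X_1,X_0)_{1-\theta,\overline{\b},E}$. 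Both $\overline{\b}\in SV$ and $\overline{a}\in SV$ by Lemma~\ref{lem0}(i), so the right-hand side spaces are legitimate members of the scales.

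For the second identity I would do the same thing one layer deeper. Take $f$ with $\|\b(t)\,\|s^{-\theta}a(s)K(s,f;X_0,X_1)\|_{\widetilde F(0,t)}\|_{\widetilde E}<\infty$. Inside the inner norm apply \eqref{eKK} to replace $K(s,f;X_0,X_1)$ by $sK(s^{-1},f;X_1,X_0)$, turning the integrand into $s^{1-\theta}a(s)K(s^{-1},f;X_1,X_0)$; then substitute $u=1/s$, which sends the interval $(0,t)$ to $(1/t,\infty)$ and, using the dilation-invariance of $\widetilde F$ noted above, converts $\|\cdot\|_{\widetilde F(0,t)}$ into $\|u^{-(1-\theta)}\overline{a}(u)K(u,f;X_1,X_0)\|_{\widetilde F(1/t,\infty)}$. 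This is precisely the inner ingredient of an $\mathcal R$-norm, now as a function of the variable $1/t$. Finally substitute $v=1/t$ in the outer norm, again invoking invariance of $\widetilde E$, and replace $\b(1/v)$ by $\overline{\b}(v)$; the result is $\|\overline{\b}(v)\,\|u^{-(1-\theta)}\overline{a}(u)K(u,f;X_1,X_0)\|_{\widetilde F(v,\infty)}\|_{\widetilde E}$, i.e.\ $\|f\|_{\mathcal R;1-\theta,\overline{\b},E,\overline{a},F}$. Since each step is a norm identity (not merely an inequality), the two membership conditions are equivalent and the norms are equal, proving $(X_0,X_1)^{\mathcal L}_{\theta,\b,E,a,F}=(X_1,X_0)^{\mathcal R}_{1-\theta,\overline{\b},E,\overline{a},F}$.

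The only genuinely delicate point—the one I would state carefully rather than wave through—is the behaviour of the tilde-spaces $\widetilde E(a,b)$, $\widetilde F(a,b)$ under the reflection $t\mapsto 1/t$. One needs: (a) $\nu$ is invariant under $t\mapsto 1/t$, hence $\|g\|_{\widetilde E}=\|g(1/\cdot)\|_{\widetilde E}$; and (b) this reflection carries the truncation to $(0,t)$ onto the truncation to $(1/t,\infty)$, i.e.\ $\chi_{(0,t)}(s)=\chi_{(1/t,\infty)}(1/s)$, so that $\|h(s)\|_{\widetilde F(0,t)}=\|h(1/u)\|_{\widetilde F(1/t,\infty)}$. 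Point (a) is where the definition $\widetilde E=\mathcal F(\widetilde L_1,L_\infty)$ is used: since $\mathcal F$ is an interpolation functor and the reflection is an isometry of both $\widetilde L_1$ and $L_\infty$, it is an isometry of $\widetilde E$; everything else is just bookkeeping with change of variables and Lemma~\ref{lem0}(i) to keep $\overline{a},\overline{\b}$ inside $SV$. I expect no real obstacle beyond making this reflection-invariance explicit.
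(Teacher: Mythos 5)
Your proof is correct, and it uses exactly the ingredients the paper relies on (the lemma is stated without proof, but the paper invokes $K(t,f;X_0,X_1)=tK(t^{-1},f;X_1,X_0)$ from \eqref{eKK} together with the reflection identity $\|g\|_{\widetilde E(1/t,\infty)}=\|g(1/s)\|_{\widetilde E(0,t)}$ in the proof of Theorem \ref{5.11.2}, which is precisely what you have spelled out). Your observation that reflection is an isometry of $\widetilde E$ because it is an isometry of both $\widetilde L_1$ and $L_\infty$, and $\widetilde E=\mathcal F(\widetilde L_1,L_\infty)$ with $\mathcal F$ an exact functor, is the right way to make point (a) rigorous.
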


We conclude this subsection with some inequalities that will be used later.

\begin{lem}\label{lemLRK}
Let $E, F$ be  r.i. spaces, $a, \b\in SV$ and $0\leq \theta\leq 1$. Then, for all $f\in X_0+X_1$ and $u>0$
\begin{equation}\label{e5}
u^{-\theta}\a(u)\|\b\|_{\widetilde{E}(0,u)}K(u,f)\lesssim \Big\| \b(t) \|s^{- \theta} \a(s) K(s,f)\|_{ \widetilde{F}(t,u)}\Big\|_{ \widetilde{E}(0,u)}
\end{equation}
and
\begin{equation}\label{e6}
u^{-\theta}\a(u)\b(u)K(u,f)\lesssim \Big\| \b(t) \|s^{- \theta} \a(s) K(s,f)\|_{ \widetilde{F}(t,\infty)}\Big\|_{ \widetilde{E}(u,\infty)}.
\end{equation}
\end{lem}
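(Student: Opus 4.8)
The plan is to establish both \eqref{e5} and \eqref{e6} by the same device that underlies the self-improvement estimates \eqref{e1}--\eqref{e4}: restrict each of the two nested rearrangement invariant norms on the right-hand side to a bounded dyadic subinterval, chosen so that on it the slowly varying weights $\a,\b$, the power $s^{-\theta}$ and the $K$-functional are all comparable to their values at $u$; then use that, since $\widetilde E$ and $\widetilde F$ are rearrangement invariant with respect to the dilation invariant measure $\tfrac{dt}{t}$, the quantities $\|\chi_{(a,2a)}\|_{\widetilde E}$ and $\|\chi_{(a,2a)}\|_{\widetilde F}$ are positive constants independent of $a>0$. Throughout one may assume $K(u,f)>0$ and that the right-hand side is finite, since otherwise the inequality is trivial.

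For \eqref{e6}, I would first use the lattice property of $\widetilde E$ to bound the right-hand side from below by $\Big\|\b(t)\,\|s^{-\theta}\a(s)K(s,f)\|_{\widetilde F(t,\infty)}\Big\|_{\widetilde E(u,2u)}$. When $t\in(u,2u)$ one has $(2u,4u)\subseteq(t,\infty)$, so the inner $\widetilde F$-norm is at least $\|s^{-\theta}\a(s)K(s,f)\|_{\widetilde F(2u,4u)}$; on $(2u,4u)$ Lemma \ref{lem0}(iii) gives $\a(s)\sim\a(u)$, one has $s^{-\theta}\sim u^{-\theta}$ uniformly in $u$ for every $\theta\in[0,1]$, and $K(s,f)\ge K(2u,f)\ge K(u,f)$ because $K(\cdot,f)$ is non-decreasing. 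Hence this inner norm is $\gtrsim u^{-\theta}\a(u)K(u,f)\,\|\chi_{(2u,4u)}\|_{\widetilde F}\sim u^{-\theta}\a(u)K(u,f)$. Inserting this bound and using $\b(t)\sim\b(u)$ for $t\in(u,2u)$ together with $\|\chi_{(u,2u)}\|_{\widetilde E}\sim 1$ then proves \eqref{e6}.

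For \eqref{e5}, analogously the right-hand side dominates $\Big\|\b(t)\,\|s^{-\theta}\a(s)K(s,f)\|_{\widetilde F(t,u)}\Big\|_{\widetilde E(0,u/2)}$. For $t\in(0,u/2)$ we have $(u/2,u)\subseteq(t,u)$, so the inner norm is at least $\|s^{-\theta}\a(s)K(s,f)\|_{\widetilde F(u/2,u)}$; on $(u/2,u)$ again $\a(s)\sim\a(u)$ and $s^{-\theta}\sim u^{-\theta}$, while $K(s,f)\ge K(u/2,f)\ge\tfrac12 K(u,f)$ by the monotonicity of $K(\cdot,f)$ and of $t\mapsto K(t,f)/t$. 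Thus the inner norm is $\gtrsim u^{-\theta}\a(u)K(u,f)$, so the right-hand side is $\gtrsim u^{-\theta}\a(u)K(u,f)\,\|\b\|_{\widetilde E(0,u/2)}$. It then remains to replace $\|\b\|_{\widetilde E(0,u/2)}$ by $\|\b\|_{\widetilde E(0,u)}$: since $\b(s)\sim\b(u/2)$ on $(u/2,u)$ one has $\|\b\|_{\widetilde E(u/2,u)}\sim\b(u/2)\lesssim\|\b\|_{\widetilde E(0,u/2)}$ by Lemma \ref{lem1}(iii), so by the triangle inequality $\|\b\|_{\widetilde E(0,u)}\le\|\b\|_{\widetilde E(0,u/2)}+\|\b\|_{\widetilde E(u/2,u)}\lesssim\|\b\|_{\widetilde E(0,u/2)}$; this yields \eqref{e5}.

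Apart from routine bookkeeping with the dilation estimate of Lemma \ref{lem0}(iii), the monotonicity of $K$, and the dilation invariance of $\widetilde E$ and $\widetilde F$, the only point specific to this lemma is the final comparison $\|\b\|_{\widetilde E(0,u/2)}\sim\|\b\|_{\widetilde E(0,u)}$ used for \eqref{e5}; I do not expect this to cause any real difficulty, and the scheme covers the endpoint values $\theta=0,1$ without modification.
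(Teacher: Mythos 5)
Your proof is correct, and it takes a genuinely different route from the paper's. For \eqref{e6} the paper pulls out $K(u,f)$ by the monotonicity of the $K$-functional and then invokes Lemma~\ref{lem1}~(i) twice to simplify the resulting nested norms, reading
$\|s^{-\theta}\a(s)\|_{\widetilde F(t,\infty)}\sim t^{-\theta}\a(t)$ and
$\|t^{-\theta}\a\b\|_{\widetilde E(u,\infty)}\sim u^{-\theta}\a(u)\b(u)$; strictly speaking this uses $\theta>0$, and for $\theta=0$ one must fall back on Lemma~\ref{lem1}~(iii) to get only the needed $\gtrsim$. For \eqref{e5} the paper simply defers to \cite[Lemma~2.12]{FMS-RL1}, so no argument appears here at all. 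Your approach is instead a localization to dyadic bands: restrict each nested norm to an interval of fixed $\nu$-length, use the dilation estimate of Lemma~\ref{lem0}~(iii) to freeze $\a$, $\b$, $s^{-\theta}$ and $K$ at their values near $u$, and use that $\|\chi_{(a,2a)}\|_{\widetilde E}$ is a positive constant independent of $a$ — a valid consequence of the dilation-invariance of the measure $\tfrac{dt}{t}$ and of the construction $\widetilde E=\mathcal F(\widetilde L_1,L_\infty)$. This buys you two things: the argument covers $\theta=0$ and $\theta=1$ with no case distinction (you never need Lemma~\ref{lem1}~(i)), and the same scheme handles both \eqref{e5} and \eqref{e6} uniformly, whereas the paper's Lemma~\ref{lem1}~(i) route does not directly apply to the truncated inner norm $\|\cdot\|_{\widetilde F(t,u)}$ appearing in \eqref{e5}. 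Your final step $\|\b\|_{\widetilde E(0,u/2)}\sim\|\b\|_{\widetilde E(0,u)}$, via Lemma~\ref{lem1}~(iii) and the triangle inequality, is also correct.
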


\begin{proof}
We refer to \cite[Lemma 2.12]{FMS-RL1} for the proof of  the first inequality \eqref{e5}.
The second  inequality \eqref{e6} is an easy consequence of Lemma \ref{lem1} (i) and the monotonicity of the $K$-functional. Indeed,
\begin{align*}
\Big\| \b(t) \|s^{-\theta} \a(s) K(s,f)\|_{ \widetilde{F}(t,\infty)}\Big\|_{ \widetilde{E}(u,\infty)}& \gtrsim
K(u,f)\Big\| \b(t) \|s^{- \theta} \a(s)\|_{ \widetilde{F}(t,\infty)}\Big\|_{ \widetilde{E}(u,\infty)}\\
&\sim K(u,f)\|t^{-\theta} \b(t) \a(t)\|_{ \widetilde{E}(u,\infty)}\\&\sim u^{-\theta}\a(u)\b(u)K(u,f).
\end{align*}
\end{proof}

Consequently, when $0\leq\theta\leq1$ and $f\in\overline{X}^{\mathcal R}_{\theta,\b,E,\a,F}$, we have
\begin{equation}\label{eK2}
K(u,f)\lesssim \frac{u^\theta}{\a(u)\|\b\|_{\widetilde{E}(0,u)}}\|f\|^{\mathcal R}_{\theta,\b,E,a,F},\quad u>0.
\end{equation}

\subsection{New extremal spaces}\label{extremalspaces}\hspace{1mm}

\vspace{2mm}
Our next definition introduces two new types of extremal interpolation spaces that will appear in relation with the extreme reiteration results that will be studied in \S 4. 
\begin{defn}\label{defLRR}
Let $E, F, G$ be  r.i. spaces, $\a, \b, c \in SV$ and $0\leq \theta\leq1$.  The space
$\overline{X}_{\theta,c,E,\b,F,\a,G}^{\mathcal L,\mathcal L}\equiv(X_0,X_1)_{ \theta,c,E,\b,F\a,G}^{\mathcal L,\mathcal L}$ is the set of all $f\in X_0+X_1$ for which 
$$ \|f \|_{\mathcal L,\mathcal L;\theta,c,E,\b,F,\a,G} :=
\bigg\|c(u)\Big\|\b(t) \|s^{-\theta} \a(s) K(s,f) \|_{\widetilde{G}(0,t)}\Big\|_{\widetilde{F}(0,u)}\bigg\|_{\widetilde{E}}<\infty.$$

Similarly, the space
$\overline{X}_{\theta,c,E,\b,F,\a,G}^{\mathcal R,\mathcal R}\equiv(X_0,X_1)_{ \theta,c,E,\b,F\a,G}^{\mathcal R,\mathcal R}$ is the set of all  $f\in X_0+X_1$ such that
$$ \|f \|_{\mathcal R,\mathcal R;\theta,c,E,\b,F,\a,G} :=
\bigg\|c(u)\Big\|\b(t) \|s^{-\theta} \a(s) K(s,f) \|_{\widetilde{G}(t,\infty)}\Big\|_{\widetilde{F}(u,\infty)}\bigg\|_{\widetilde{E}} < \infty.$$
\end{defn}

A standard reasoning (see \cite{Bennett-Sharpley,EO}) shows that the above two classes are (quasi-) Banach spaces, provided $X_0$ and $X_1$ are so.  Moreover,  using estimates \eqref{e1}-\eqref{e4}  one can obtain the following inclusions

$$\begin{aligned}
 &\overline{X}^{\mathcal L,\mathcal L}_{\theta,c,E,\b,F,\a,G}\hookrightarrow \overline{X}^{\mathcal L}_{\theta, c,E,\b\a,F}& \text{ if }\ \ 0\leq \theta< 1 \\
 &\overline{X}^{\mathcal L,\mathcal L}_{1,c,E,\b,F,\a,G} \hookrightarrow\overline{X}^{\mathcal L}_{1,c,E,\b\|\a\|_{\widetilde{G}(0,t)},F} & \text{ if } \theta= 1,
\end{aligned}$$
and
$$
\begin{aligned}
& \overline{X}^{\mathcal R,\mathcal R}_{\theta,c,E,\b,F,\a,G}\hookrightarrow \overline{X}^{\mathcal R}_{\theta, c,E,\a\b,F} & \text{ if } 0< \theta \leq 1 \\
&\overline{X}^{\mathcal R,\mathcal R}_{0,c,E,\b,F,a,G}  \hookrightarrow\overline{X}^{\mathcal R}_{0,c,E,\b\|\a\|_{\widetilde{G}(t,\infty)},F} & \text{ if } \theta =0.
\end{aligned}
$$

\vspace{2mm}
It is easy to show that the spaces $\mathcal L,\mathcal L$ and $\mathcal R,\mathcal R$ are also related by the following symmetry property.
\begin{lem}\label{symLL}
Let $E$, $F$, $G$ be r.i. spaces, $\a$, $\b$, $c\in SV$ and $0\leq\theta\leq1$. Then
$$(X_0,X_1)^{\mathcal{L,L}}_{\theta,c,E,\b,F,a,G}=(X_1,X_0)^{\mathcal{R,R}}_{1-\theta,\overline{c},E,\overline{b},F,\overline{a},G}$$
where $\overline{a}(t)=a(1/t)$, $\overline{\b}(t)=\b(1/t)$ and $\overline{c}(t)=c(1/t)$,   $t>0$.
\end{lem}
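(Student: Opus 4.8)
The plan is to establish the symmetry identity
$$(X_0,X_1)^{\mathcal{L,L}}_{\theta,c,E,\b,F,a,G}=(X_1,X_0)^{\mathcal{R,R}}_{1-\theta,\overline{c},E,\overline{\b},F,\overline{a},G}$$
by a direct change of variables in the iterated norms, exactly as one proves Lemma \ref{symLR}. First I would recall the fundamental reflection identity for the $K$-functional, equation \eqref{eKK}: $K(t,f;X_1,X_0)=tK(t^{-1},f;X_0,X_1)$. Writing $L(s):=K(s,f;X_1,X_0)$ and $K(s):=K(s,f;X_0,X_1)$, we have $L(s)=sK(1/s)$. The goal is to start from the defining norm of the right-hand space, namely
$$\bigg\|\overline{c}(u)\Big\|\overline{\b}(t) \|s^{-(1-\theta)} \overline{a}(s) L(s) \|_{\widetilde{G}(t,\infty)}\Big\|_{\widetilde{F}(u,\infty)}\bigg\|_{\widetilde{E}},$$
and transform it into the defining norm of the left-hand space.

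The key computational step is the innermost norm. Substituting $L(s)=sK(1/s)$ and $\overline{a}(s)=a(1/s)$, the integrand in $\widetilde{G}(t,\infty)$ becomes $s^{-(1-\theta)}a(1/s)\cdot s K(1/s) = s^{\theta}a(1/s)K(1/s)$. Now perform the substitution $s\mapsto 1/s$; since the measure $\tfrac{ds}{s}$ is invariant under $s\mapsto 1/s$ and it maps $(t,\infty)$ onto $(0,1/t)$, and since every r.i. space $\widetilde{G}$ with respect to $\nu$ is invariant under this reflection (because it preserves decreasing rearrangements with respect to $\nu$), one gets
$$\|s^{-(1-\theta)}\overline{a}(s)L(s)\|_{\widetilde{G}(t,\infty)}=\|s^{-\theta}a(s)K(s)\|_{\widetilde{G}(0,1/t)}.$$
Iterating this idea: the middle norm $\widetilde{F}(u,\infty)$ in the variable $t$, after substituting $t\mapsto 1/t$ (again invariant for $\nu$, mapping $(u,\infty)$ to $(0,1/u)$) and using $\overline{\b}(t)=\b(1/t)$, becomes a norm over $\widetilde{F}(0,1/u)$ with weight $\b(1/t)\mapsto\b(t)$. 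Finally the outermost norm $\widetilde{E}$ in $u$, after the substitution $u\mapsto 1/u$ and using $\overline{c}(u)=c(1/u)$, produces exactly
$$\bigg\|c(u)\Big\|\b(t) \|s^{-\theta} a(s) K(s,f;X_0,X_1) \|_{\widetilde{G}(0,t)}\Big\|_{\widetilde{F}(0,u)}\bigg\|_{\widetilde{E}},$$
which is $\|f\|_{\mathcal L,\mathcal L;\theta,c,E,\b,F,a,G}$. Since each step is an equality (up to the harmless equivalences built into the definition of r.i. spaces, which is already how Lemma \ref{symLR} is stated), the two quasi-norms are equivalent, giving the asserted identity of spaces.

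I do not expect a genuine obstacle here; the result is a routine "reflection" lemma and the proof is essentially three nested applications of the same change of variables. The one point that requires a moment's care is justifying that the reflection $s\mapsto 1/s$ leaves each r.i. space $\widetilde{E},\widetilde{F},\widetilde{G}$ invariant with equal norm: this follows because for a function $h$ on $(0,\infty)$, the map $h(s)\mapsto h(1/s)$ preserves the distribution function with respect to $\nu$ (as $\nu$ is the multiplicatively invariant measure $\tfrac{ds}{s}$), hence preserves the $\nu$-decreasing rearrangement, and r.i. spaces depend only on that rearrangement. The bookkeeping of which weight turns into which (tracking $a\mapsto\overline a$, $\b\mapsto\overline\b$, $c\mapsto\overline c$ and $\theta\mapsto1-\theta$) is the only place one can slip, but it is purely mechanical. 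Since the statement is symmetric in the two constructions, the reverse inclusion follows by applying the same argument with the roles of $(X_0,X_1)$ and $(X_1,X_0)$ interchanged and using $\overline{\overline a}=a$, etc.
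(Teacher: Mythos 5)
Your proof is correct and follows precisely the computation the paper has in mind (the paper offers no explicit proof, stating only that the lemma is ``easy to show,'' but it relies on the same reflection identity $\|f\|_{\widetilde{E}(1/t,\infty)}=\|f(1/s)\|_{\widetilde{E}(0,t)}$ in the proof of Theorem \ref{5.11.2}). The justification that $s\mapsto1/s$ is an isometry of each tilde space because it preserves the $\nu$-decreasing rearrangement is exactly the right point to flag, and the three-fold bookkeeping of $\theta\mapsto1-\theta$, $a\mapsto\overline a$, $\b\mapsto\overline\b$, $c\mapsto\overline c$ is carried out correctly.
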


%%%%%%%%%%%%%%%%%%%%%%%%%%%%%%%%%%%%%%%%%%%%%%%%%%%%%%%%%%%%%%%%%%%%%%%%%%%%%%%%%%%%%%%%%%%%%%%%%%%%%%%%%%%%%%%%%%%%%%%%%%%%%%%%%%%%%%%%%%%
\section{Generalized Holmstedt Type formulae}
For  parameters $0\leq \theta_0<\theta_1\leq 1$ and $\a$, $\b_0$, $\b_1$ slowly varying functions, the couples
 $$(\overline{X}_{\theta_0,\b_0,E_0}, \overline{X}^{\mathcal R}_{\theta_1, \b_1,E_1,\a,F})_{\theta,\b,E}\mand
(\overline{X}^{\mathcal L}_{\theta_0, \b_0,E_0,\a,F}, \overline{X}_{\theta_1,\b_1,E_1})_{\theta,\b,E},$$
are compatible (quasi-) Banach couples (assuming additional conditions on $\b_0$ and $\b_1$ in the extreme cases $\theta_0=0$, $\theta_1=1$, respectively).  In this section we relate the $K$-functionals of these couples with the $K$-functional of the original one, $\overline{X}$, by means of some generalized Holmstedt type formulae. 

%%%%%%%%%%%%%%%%%%%%%%%%%%%%%%%%%%%%%%%%%%%%%%%%%%%%%%%%%%%%%%%%%%
%%%%%%%%%%%%%%%%%%%%%%%%%%%%%%%%%%%%%%%%%%%%%%%%%%%%%%%%%%%%%%%%%%
%%%%%%%%%%%%%%%%%%%%%%%%%%%%%%%%%%%%%%%%%%%%%%%%%%%%%%%%%%%%%%%%%%

\subsection{The $K$-functional of the couple $(\overline{X}_{\theta_0,\b_0,E_0},\overline{X}^{\mathcal R}_{\theta_1, \b_1,E_1,a,F})$, $0\leq \theta_0<\theta_1<1$}\label{HoR}\hspace{2mm}

%\vspace{2mm}
%In the following result we present three different formulae for the $K$-functional of the couple
%$$(\overline{X}_{\theta_0,\b_0,E_0},\overline{X}^{\mathcal R}_{\theta_1, \b_1,E_1,\a,F}),$$
%that correspond to the cases $0<\theta_0$, $\theta_0=0$ and $\overline{X}_{\theta_0,\b_0,E_0}=X_0$ .

% It is important to observe that the $K$-functional of this couple is different from the $K$-functional of the couple
%$(\overline{X}^{\mathcal R}_{\theta_0,\b_0,E_0,a,F},\overline{X}_{\theta_1, \b_1,E_1})$, with $0<\theta_0<\theta_1<1$, given in \cite[Thm. 5.1]{FMS-RL1}, and it cannot be obtained by a symmetry argument.

\begin{thm} \label{5.10.2} 
Let $0<\theta_0<\theta_1<1$. Let $E_0$, $E_1$, $F$ be r.i. spaces and  $\a$, $\b_0$, $\b_1\in SV$ such that  $\|\b_1\|_{\widetilde{E}_1(0,1)}<\infty$.
\vspace{2mm}

\noindent a) Then, for every  $f \in \overline{X}_{\theta_0, \b_0,E_0}+ \overline{X}^{\mathcal R}_{\theta_1,\b_1,E_1,a,F}$ and all $u>0$
 \begin{align*}
 K\big( \rho(u), f;\overline{X}_{\theta_0,\b_0,E_0},\overline{X}^{\mathcal R}_{\theta_1, \b_1,E_1,\a,F}\big ) &\sim
 \| t^{- \theta_0} \b_0(t) K(t,f)\|_{ \widetilde{E}_0(0,u)}\\
&+\rho(u) \| \b_1\|_{\widetilde{E}_1(0,u)} \|t^{- \theta_1} \a(t) K(t,f)\|_{ \widetilde{F}(u,\infty)}\\
&+\rho(u)\Big\| \b_1(t) \|s^{- \theta_1} \a(s) K(s,f)\|_{ \widetilde{F}(t,\infty)}\Big\|_{ \widetilde{E}_1(u,\infty)},
\end{align*}
where 
\begin{equation}\label{rho1}
\rho(u) =u^{\theta_1-\theta_0}\frac{\b_0(u)}{\a(u)\|\b_1\|_{\widetilde{E}_1(0,u)}},\quad u>0.
\end{equation} 
\vspace{1mm}

\noindent b) If  $\|\b_0\|_{\widetilde{E}_0(1,\infty)}<\infty$, then, for every $f \in \overline{X}_{0,\b_0,E_0}+\overline{X}^{\mathcal R}_{\theta_1, \b_1,E_1,\a,F}$ and all $u>0$
 \begin{align*}
 K\big( \rho(u), f;\overline{X}_{0,\b_0,E_0},\overline{X}^{\mathcal R}_{\theta_1, \b_1,E_1,\a,F}\big )&\sim
 \|\b_0(t) K(t,f)\|_{ \widetilde{E}_0(0,u)}\\
&+\rho(u) \| \b_1\|_{\widetilde{E}_1(0,u)} \|t^{- \theta_1} \a(t) K(t,f)\|_{ \widetilde{F}(u,\infty)}\\
&+\rho(u)\Big\| \b_1(t) \|s^{- \theta_1} \a(s) K(s,f)\|_{ \widetilde{F}(t,\infty)}\Big\|_{ \widetilde{E}_1(u,\infty)},
\end{align*}
where $\rho(u) =u^{\theta_1}\frac{\|\b_0\|_{\widetilde{E}_0(u,\infty)}}{\a(u)\|\b_1\|_{\widetilde{E}_1(0,u)}}$, $u>0$
\vspace{1mm}

\noindent c) Moreover, for every  $f \in X_0+\overline{X}^{\mathcal R}_{\theta_1, \b_1,E_1,\a,F}$ and all $u>0$
\vspace{2mm}
 \begin{align}
 K\big( \rho(u), f;X_0,\overline{X}^{\mathcal R}_{\theta_1, \b_1,E_1,\a,F}\big )&\sim
 \rho(u) \| \b_1\|_{\widetilde{E}_1(0,u)} \|t^{- \theta_1} \a(t) K(t,f)\|_{ \widetilde{F}(u,\infty)}\label{x01}\\
&+\rho(u)\Big\| \b_1(t) \|s^{- \theta_1} \a(s) K(s,f)\|_{ \widetilde{F}(t,\infty)}\Big\|_{ \widetilde{E}_1(u,\infty)},\nonumber
\end{align}
where $\rho(u) =u^{\theta_1}\frac{1}{\a(u)\|\b_1\|_{\widetilde{E}_1(0,u)}}$, $u>0$. 
\end{thm}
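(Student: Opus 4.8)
The plan is to prove all three parts of Theorem~\ref{5.10.2} simultaneously, following the classical Holmstedt scheme adapted to the $\mathcal{R}$-construction. The key is to set up a near-optimal decomposition $f = f_0 + f_1$ at the point $\rho(u)$, where $f_0 \in \overline{X}_{\theta_0,\b_0,E_0}$ (or $X_0$) and $f_1 \in \overline{X}^{\mathcal R}_{\theta_1,\b_1,E_1,\a,F}$, by choosing a suitable ``cut'' of the $K$-functional of the underlying couple $\overline{X}$ at parameter $u$. Concretely, for each $u>0$ one picks a decomposition $f = g_0 + g_1$ in $X_0 + X_1$ with $\|g_0\|_{X_0} + u\|g_1\|_{X_1} \lesssim K(u,f)$, and sets $f_0 = g_0$, $f_1 = g_1$. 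The lower bound ($\gtrsim$) will come from testing an arbitrary decomposition against the three terms on the right-hand side: that is, I would show each of the three summands is dominated by $K(\rho(u),f;\cdot,\cdot)$ using the definitions of the norms, the monotonicity of $K(t,\cdot)$ and $t^{-1}K(t,\cdot)$, inequality~\eqref{eK2} and Lemma~\ref{lemLRK} (estimates~\eqref{e5}, \eqref{e6}), together with Lemma~\ref{lem1}(i) to evaluate the $SV$-norms of power functions.

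For the upper bound in part~a), with the above decomposition I would estimate $\|f_0\|_{\theta_0,\b_0,E_0} = \|t^{-\theta_0}\b_0(t)K(t,f_0)\|_{\widetilde{E}_0}$ by splitting the $\widetilde{E}_0$-norm into the intervals $(0,u)$ and $(u,\infty)$. On $(0,u)$ one uses $K(t,f_0) \lesssim K(t,f) + K(t,f_1)$ and controls the $f_1$-contribution via~\eqref{eK2}; this yields the first term $\|t^{-\theta_0}\b_0(t)K(t,f)\|_{\widetilde{E}_0(0,u)}$ plus something absorbed by the $\rho(u)$-weighted terms. On $(u,\infty)$ one uses $K(t,f_0) \le \|g_0\|_{X_0} \lesssim K(u,f)$ and the fact that $\|t^{-\theta_0}\b_0(t)\|_{\widetilde{E}_0(u,\infty)} \sim u^{-\theta_0}\b_0(u)$ (Lemma~\ref{lem1}(i), using $\theta_0>0$), then compares $u^{-\theta_0}\b_0(u)K(u,f)$ with $\rho(u)$ times the $\mathcal{R}$-terms via~\eqref{e5}. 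Symmetrically, for $\|f_1\|^{\mathcal R}_{\theta_1,\b_1,E_1,\a,F}$ one splits the outer $\widetilde{E}_1$-norm at $u$: on $(u,\infty)$ one gets directly the third term (after estimating $K(s,f_1) \lesssim K(s,f) + K(s,f_0)$ and using $K(s,f_0)\lesssim K(u,f)$ for the correction), while on $(0,u)$ one uses that $\|\cdot\|_{\widetilde{F}(t,\infty)}$ with $t<u$ splits as $(t,u) \cup (u,\infty)$; the $(u,\infty)$ piece is independent of $t$ and factors out as $\|\b_1\|_{\widetilde{E}_1(0,u)}\|s^{-\theta_1}\a(s)K(s,f)\|_{\widetilde{F}(u,\infty)}$ giving the second term, and the $(t,u)$ piece involves only small $s$ where $K(s,f_1)\lesssim K(s,f) + s\|g_1\|_{X_1} \lesssim K(s,f) + (s/u)K(u,f)$, the last part being handled by a Hardy-type inequality (\S\ref{lemmas}). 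Multiplying by $\rho(u)$ throughout gives the claimed formula. Parts b) and c) follow by the same argument: in b) one replaces $\theta_0 = 0$, so that the weight on the $f_0$-side uses $\|\b_0\|_{\widetilde{E}_0(u,\infty)}$ via Lemma~\ref{lem1}(ii)-(iii) and the condition $\|\b_0\|_{\widetilde{E}_0(1,\infty)}<\infty$ to make this finite, and in c) the $f_0$-term vanishes entirely since $X_0$ contributes only $\|g_0\|_{X_0}\lesssim K(u,f)$, which is already controlled.

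A technical point worth isolating is that $\rho$ is (equivalent to) a strictly increasing continuous bijection of $(0,\infty)$ onto itself: because $\theta_1 - \theta_0 > 0$ (resp.\ $\theta_1 > 0$ in b) and c)) and $\b_0$, $\a$, $\|\b_1\|_{\widetilde{E}_1(0,\cdot)}$ are all slowly varying, Lemma~\ref{lem0}(iii) shows $\rho(u) \sim u^{\theta_1-\theta_0} w(u)$ with $w \in SV$, so $\rho$ ranges over all of $(0,\infty)$; this guarantees that writing $K(\rho(u),f;\cdot,\cdot)$ for $u>0$ covers all relevant values of the parameter of the new couple, and it is what the reiteration theorems in Section~4 will need.

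The main obstacle I expect is the bookkeeping in the $(0,u)$ portion of the $\mathcal{R}$-norm of $f_1$ — specifically, controlling the inner norm $\|s^{-\theta_1}\a(s)K(s,f_1)\|_{\widetilde{F}(t,u)}$ for $t$ ranging over $(0,u)$ and then taking the outer $\widetilde{E}_1(0,u)$-norm with weight $\b_1$. Here one cannot simply dominate $K(s,f_1)$ by $K(s,f)$, so the correction term $(s/u)K(u,f)$ must be carried along and the resulting double integral controlled by a Hardy inequality in the r.i./$SV$ setting; getting the constants and the weight $\rho(u)\|\b_1\|_{\widetilde{E}_1(0,u)}$ to come out exactly as stated (matching the second term on the right-hand side) is the delicate computation. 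Everything else reduces to repeated application of Lemma~\ref{lem1}(i), the monotonicity properties of $K$, and the auxiliary estimates~\eqref{eK2}, \eqref{e5}, \eqref{e6}.
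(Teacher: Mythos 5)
Your proposal follows essentially the same route as the paper: decompose $f$ at parameter $u$ using a near-optimal splitting in $X_0+X_1$, then split the $\widetilde{E}_0$- and $\widetilde{E}_1$-norms at $u$ and control each piece via the monotonicity of $K(t,\cdot)$ and $K(t,\cdot)/t$, Lemma~\ref{lem1}(i), and the estimates \eqref{eK2}, \eqref{e5}, \eqref{e6}; the lower bound from an arbitrary decomposition is likewise handled as you describe. One correction to the obstacle you isolate: the $(t,u)$ portion of the $\mathcal{R}$-norm of $f_1$ does \emph{not} require a Hardy-type inequality. Because $K(s,f_1)\leq s\|g_1\|_{X_1}\lesssim (s/u)K(u,f)$ pointwise, the inner norm becomes $\frac{K(u,f)}{u}\|s^{1-\theta_1}\a(s)\|_{\widetilde{F}(t,u)}\leq\frac{K(u,f)}{u}\|s^{1-\theta_1}\a(s)\|_{\widetilde{F}(0,u)}\sim u^{-\theta_1}\a(u)K(u,f)$ by Lemma~\ref{lem1}(i), exploiting $1-\theta_1>0$; taking the outer $\widetilde{E}_1(0,u)$-norm with weight $\b_1$ then gives $u^{-\theta_1}\a(u)\|\b_1\|_{\widetilde{E}_1(0,u)}K(u,f)$, which is dominated by the second term on the right-hand side by the same Lemma~\ref{lem1}(i) and monotonicity of $K$. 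There is no need to carry the extra $K(s,f)$ along and no double-integral estimate; the Hardy lemmas in \S\ref{lemmas} enter only in the reiteration proofs of Section~4, not in this Holmstedt formula.
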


\begin{proof}
We only proof a), the proofs of b) and c) are similar.
Given $f\in X_0+X_1$ and $u>0$ we consider the  (quasi-) norms
\begin{align*}
(P_0 f)(u) &= \|t^{- \theta_0} \b_0(t) K(t,f) \|_{ \widetilde{E}_0(0,u)},\\
(Q_0 f)(u) &= \| t^{- \theta_0} \b_0(t) K(t,f)\|_{ \widetilde{E}_0(u,\infty)},\\
(P_1 f)(u) &= \Big\| \b_1(t) \|s^{- \theta_1} \a(s) K(s,f)\|_{ \widetilde{F}(t,u)}\Big\|_{ \widetilde{E}_1(0,u)} ,\\
(R_1 f)(u) &= \| \b_1\|_{ \widetilde{E}_1(0,u)} \|t^{- \theta_1} \a(t) K(t,f)\|_{ \widetilde{F}(u,\infty)},\\
(Q_1 f)(u) &= \Big\| \b_1(t)\| s^{- \theta_1} \a(s) K(s,f)\|_{ \widetilde{F}(t,\infty)}\Big\|_{\widetilde{E}_1(u,\infty)},
\end{align*}
and we denote $Y_0= \overline{X}_{\theta_0, \b_0,E_0}$ and $Y_1=\overline{X}^{\mathcal R}_{\theta_1,\b_1,E_1,a,F}$.  With this notation what we pursue to show is the equivalence
\begin{equation}\label{5.17.3.11}
K(\rho(u), f;Y_0,Y_1) \sim (P_{0}f)(u) + \rho(u)[(R_1f)(u)+ (Q_{1}f)(u)],
\end{equation}
for all $f\in Y_0+Y_1$ and $u>0$, where $\rho$ is defined by \eqref{rho1}.

We first prove the upper estimate $\lesssim$ of \eqref{5.17.3.11} for  all $f\in X_0+X_1$ and any positive function $\rho:(0,\infty)\rightarrow(0,\infty)$.

Suppose that $f \in X_0 + X_1$ and fix $u\in(0,\infty)$. We may assume with no loss of generality  that $(P_0f)(u)$, $(R_1 f)(u)$ and $(Q_1f)(u)$ are finite, otherwise the upper estimate of \eqref{5.17.3.11} holds trivially. As usual (see for example~\cite{Bennett-Sharpley} or \cite{EOP}) we choose a decomposition $f = g + h$ 
such that
\begin{equation}\label{E-45}
 \|g\|_{X_0} + u\|h\|_{X_1} \leq 2 K(u,f)
\end{equation}
and  
\begin{equation}\label{45}
K(t,g) \leq 2 K(u,f)\ \ \text{ and }  \ \ \frac{K(t,h)}{t} \leq 2 \frac{K(u,f)}{u}
\end{equation}
for all $t\in(0,\infty)$. So in order to obtain the upper estimate of \eqref{5.17.3.11} it suffices to prove that
$$\|g\|_{Y_0}+\rho(u)\|h\|_{Y_1}\lesssim  (P_{0}f)(u) + \rho(u) [(R_1f(u)+(Q_{1}f)(u)].$$
We start by showing that  $\|g\|_{Y_0}\lesssim  (P_{0}f)(u)$. The triangle inequality and the (quasi-) subadditivity of the $K$- functional establish that $$\|g\|_{Y_0}\leq(P_0g)(u)+(Q_0g)(u)\lesssim (P_0f)(u)+(P_0h)(u)+(Q_0g)(u).$$
Using  (\ref{45}), Lemma \ref{lem1} (i) and (\ref{e1}), we obtain
\begin{align*}
(P_{0}h)(u) &= \big\| t^{- \theta_0} \b_0(t)
K(t,h)\big\|_{\widetilde{E}_0(0,u)} \lesssim \frac{K(u,f)}{u} \|  t^{1-\theta_0} \b_0(t)\|_{\widetilde{E}_0(0,u)}\\& \sim
 u^{- \theta_0} \b_0(u) K(u,f)\lesssim (P_0 f)(u)
\end{align*}
and
\begin{align*}
(Q_{0}g)(u) &= \|  t^{-\theta_0} \b_0(t) K(t,g)\|_{\widetilde{E}_0(u,\infty)} \lesssim
 K(u,f)\| t^{ - \theta_0}\b_0(t)\|_{\widetilde{E}_0(u,\infty)}\\
 &\sim  u^{- \theta_0} \b_0(u) K(u,f)\lesssim
 (P_0f)(u).
 \end{align*}
These give 
$$\|g\|_{Y_0}\lesssim (P_0f)(u)<\infty$$
and therefore $g\in Y_0$. Now we proceed with $\|h\|_{Y_1}$.  Again by  triangle inequality and the (quasi-) subadditivity of the $K$-functional, we have
\begin{align*}
\|h\|_{Y_1}&\leq (P_1h)(u)+(R_1h)(u)+(Q_1h)(u)\\&\lesssim(P_1h)(u)+(R_1f)(u)+(R_1g)(u)+(Q_1f)(u)+(Q_1g)(u).
\end{align*}
 Using (\ref{45}), Lemma \ref{lem1} (i) and \eqref{e3}, we obtain
\begin{align*}
(P_1 h)(u) &= \Big\| \b_1(t) \|s^{- \theta_1} \a(s) K(s,h)\|_{ \widetilde{F}(t,u)}\Big\|_{ \widetilde{E}_1(0,u)}\\
&\lesssim \frac{K(u,f)}{u}\Big\| \b_1(t) \|s^{1- \theta_1} \a(s)\|_{ \widetilde{F}(t,u)}\Big\|_{ \widetilde{E}_1(0,u)}\\
&\lesssim \frac{K(u,f)}{u}\Big\| \b_1(t) \|s^{1- \theta_1} \a(s)\|_{ \widetilde{F}(0,u)}\Big\|_{ \widetilde{E}_1(0,u)}\\
&\sim u^{-\theta_1}\a(u)\|\b_1\|_{\widetilde{E}_1(0,u)}K(u,f)\lesssim (R_1 f)(u)
\end{align*}
and
\begin{align*}
(R_1 g)(u) &= \| \b_1\|_{\widetilde{E}_1(0,u)} \|t^{- \theta_1} \a(t) K(t,g)\|_{ \widetilde{F}(u,\infty)}\\
&\lesssim K(u,f)\|\b_1\|_{\widetilde{E}_1(0,u)}\|t^{-\theta_1}\a(t)\|_{ \widetilde{F}(u,\infty)}\\
&\sim u^{-\theta_1}\a(u)\|\b_1\|_{\widetilde{E}_1(0,u)}K(u,f)\lesssim(R_1 f)(u).
\end{align*}
Similarly, using also \eqref{e6}, we estimate $(Q_1 g)(u)$ from above
 \begin{align*}
(Q_1 g)(u) &= \Big\| \b_1(t) \|s^{- \theta_1} \a(s) K(s,g)\|_{ \widetilde{F}(t,\infty)}\Big\|_{ \widetilde{E}_1(u,\infty)}\\
&\lesssim K(u,f)\Big\| \b_1(t) \|s^{- \theta_1} \a(s)\|_{ \widetilde{F}(t,\infty)}\Big\|_{ \widetilde{E}_1(u,\infty)}\\
&\sim u^{-\theta_1}\b_1(u)\a(u)K(u,f)\lesssim (Q_1 f)(u).
\end{align*}
Thus, 
$$\|h\|_{Y_1}\lesssim  (R_1 f)(u)+ (Q_{1}f)(u)<\infty,$$
and we obtain that $h\in Y_1$. Summing up, we deduce that 
\begin{align*}
K(\rho(u), f;Y_0,Y_1) & \leq \|g\|_{Y_0}+\rho(u)\|h\|_{Y_1}  \\ 
&\lesssim (P_0f)(u)+\rho(u)[(R_1f)(u)+(Q_{1}f)(u)],
\end{align*}
which is the upper estimate of  \eqref{5.17.3.11}.

Let us prove the  lower estimate of \eqref{5.17.3.11}. More precisely,
\begin{equation}\label{ec365.1}
(P_0f)(u)+\rho(u)[(R_1 f)(u)+(Q_1f)(u)]\lesssim K(\rho(u), f;Y_0,Y_1).
\end{equation}
for all $f \in Y_{0} + Y_{1}$, $u>0$ and $\rho$ defined by \eqref{rho1}.

Choose  $f=g+h$  any decomposition of $f$ with $g\in Y_0$ and $h\in Y_1$, and fix again $u>0$. Using the (quasi-) subadditivity of the $K$-functional and the definition of the norm in $Y_0$ and $Y_1$, we have
\begin{align*}
(P_{0}f)(u) & \lesssim (P_{0}g)(u) + (P_{0}h)(u)  \leq  \|g\|_{Y_{0}} + (P_{0}h)(u), \\
(R_1f)(u)&\lesssim (R_1g)(u)+(R_1h)(u)\leq(R_1g)(u)+\|h\|_{Y_1},\\
(Q_{1}f)(u) & \lesssim (Q_{1}g)(u) + (Q_{1}h)(u)  \leq
(Q_{1}g)(u) + \|h\|_{Y_{1}}.
\end{align*}
Then,
\begin{align*}
(P_{0}f)(u) +\rho(u)[(R_1f(u)&+(Q_{1}f)(u)]\\& \lesssim \|g\|_{Y_{0}} + (P_{0}h)(u) +\rho(u)[(R_1g)(u)+(Q_{1}g)(u) + \|h\|_{Y_{1}}].
\end{align*}
Thus, it is enough to verify that $(P_0 h)(u)$, $\rho(u)(R_1g)(u)$ and $\rho(u)(Q_1g)(u)$ are bounded by $\|g\|_{Y_0} +\rho(u)\|h\|_{Y_1}$. We begin with $(P_0h)(u)$. Estimate \eqref{eK2} with $f=h$ and Lemma \ref{lem1} (i) imply that
\begin{align*}
(P_0h)(u)&\lesssim\|h\|_{Y_1}\Big\|t^{\theta_1- \theta_0} \frac{\b_0(t)}{\a(t)\|\b_1\|_{\widetilde{E}_1(0,t)}} \Big\|_{ \widetilde{E}_0(0,u)}
\sim\rho(u)\|h\|_{Y_1}.\nonumber
\end{align*}
Observe that by hypothesis $0<\theta_0<\theta_1<1$ and then $\theta_1-\theta_0>0$.

Similarly, using  \eqref{eK} with $f=g$ and  Lemma \ref{lem1} (i), we have the estimates
\begin{align*}
(R_1g)(u)&\lesssim \| \b_1\|_{ \widetilde{E}_1(0,u)} \Big\|t^{\theta_0- \theta_1} \frac{\a(t)}{\b_0(t)}\Big\|_{ \widetilde{F}(u,\infty)} \|g\|_{Y_0}\\
&\sim u^{\theta_0-\theta_1}\frac{\a(u)}{\b_0(u)} \| \b_1\|_{ \widetilde{E}_1(0,u)}\|g\|_{Y_0}=\frac{1}{\rho(u)}\|g\|_{Y_0}
\end{align*}
and
\begin{align*}
(Q_1g)(u)
&\leq  \|g\|_{Y_0} \bigg\| \b_1(t) \Big\|s^{\theta_0- \theta_1} \frac{\a(s)}{\b_0(s)}\Big\|_{ \widetilde{F}(t,\infty)}\bigg\|_{ \widetilde{E}_1(u,\infty)}\\
&\sim u^{\theta_0-\theta_1}\frac{a(u)\b_1(u)}{\b_0(u)}\|g\|_{Y_0}\lesssim \rho(u)\|g\|_{Y_0}
\end{align*}
where the last equivalence follows from Lemma \ref{lem1} (iii).

Putting together the previous estimates we obtain that
$$(P_{0}f)(u) + \rho(u) [(R_1f)(u)+(Q_{1}f)(u)]\lesssim \|g\|_{Y_0}+\rho(u)\|h\|_{Y_1}.$$
Finally, taking infimum over all possible decomposition of $f=g+h$, with $g\in Y_0$ and $h\in Y_1$, we obtain \eqref{ec365.1} and the proof of a) is finished.
\end{proof}

%%%%%%%%%%%%%%%%%%%%%%%%%%%%%%%%%%%%%%%%%%%%%%%%%%%%%%%%%%%%%%%%%%%%%%%%%%%%%%%%%%%%%%%%%%%%%%%%%%%%%%%%%%%%%%%%%%%%%%%%%%%%%%%%%%%%%%%%%%%%%%%%%%%%%%%%%%%%%%%%%%%%%%%%%%%%%%%%%%%%%%%%%%%%%%%%%%%%%%%%%%%%%%%%%%%%%%%%%%%%%%%%%%%
\subsection{The $K$-functional of the couple $\big(\overline{X}^{\mathcal L}_{\theta_0,\b_0,E_0,\a,F},\overline{X}_{\theta_1, \b_1,E_1}\big)$, $0<\theta_0<\theta_1\leq 1$}\label{HoL}\hspace{2mm}

\vspace{2mm}
Next theorem can be proved  as  Theorem \ref{5.10.2},  although we shall make use of a symmetry argument.

\begin{thm} \label{5.11.2}
Let $0<\theta_0<\theta_1<1$. Let  $E_0$, $E_1$, $F$ be r.i. spaces and  $\a$, $\b_0$, $\b_1\in SV$ such that $\|\b_0\|_{\widetilde{E}_0(1,\infty)}<\infty$.

\vspace{1mm}
\noindent a)  Then, for every $f \in \overline{X}^{\mathcal L}_{\theta_0, \b_0,E_0,\a,F}+ \overline{X}_{\theta_1,\b_1,E_1}$ and all $u>0$
\vspace{1mm}
 \begin{align*}
 K\big( \rho(u), f;\overline{X}^{\mathcal L}_{\theta_0,\b_0,E_0,\a,F},\overline{X}_{\theta_1, \b_1,E_1} \big )& \sim
\Big\|\b_0(t)\| s^{- \theta_0} \a(s) K(s,f)\|_{ \widetilde{F}(0,t)}\Big\|_{\widetilde{E}_0(0,u)}\\
&+\| \b_0\|_{\widetilde{E}_0(u,\infty)} \|t^{- \theta_0} \a(t) K(t,f)\|_{ \widetilde{F}(0,u)}\\
&+\rho(u) \|t^{- \theta_1} \b_1(t) K(t,f)\|_{ \widetilde{E}_1(u,\infty)},
\end{align*}
where
$\rho(u) =u^{\theta_1-\theta_0}\frac{\a(u)\|\b_0\|_{\widetilde{E}_0(u,\infty)}}{\b_1(u)}$, $u>0$.

\vspace{1mm}
\noindent b) If $\|\b_1\|_{\widetilde{E}_1(0,1)}<\infty$, then, for every $f \in \overline{X}^{\mathcal L}_{\theta_0, \b_0,E_0,\a,F}+ \overline{X}_{1,\b_1,E_1}$ and all $u>0$
\vspace{1mm}
 \begin{align*}
 K\big( \rho(u), f; \overline{X}^{\mathcal L}_{\theta_0,\b_0,E_0,\a,F},\overline{X}_{1, \b_1,E_1}\big )&\sim
 \Big\|\b_0(t)\| s^{- \theta_0} \a(s) K(s,f)\|_{ \widetilde{F}(0,t)}\Big\|_{\widetilde{E}_0(0,u)}\\
&+\| \b_0\|_{\widetilde{E}_0(u,\infty)} \|t^{- \theta_0} \a(t) K(t,f)\|_{ \widetilde{F}(0,u)}\\
&+\rho(u) \|t^{-1} \b_1(t) K(t,f)\|_{ \widetilde{E}_1(u,\infty)},
 \end{align*}
where $\rho(u) =u^{1-\theta_0}\frac{\a(u)\|\b_0\|_{\widetilde{E}_0(u,\infty)}}{\|\b_1\|_{\widetilde{E}_1(0,u)}}$, $u>0$.

\vspace{1mm}
\noindent c) Moreover, for every  $f \in \overline{X}^{\mathcal L}_{\theta_0, \b_0,E_0,\a,F}+ X_1$ and all $u>0$
\begin{eqnarray}
 K\big( \rho(u), f; \overline{X}^{\mathcal L}_{\theta_0,\b_0,E_0,\a,F},X_1\big )&\sim
 \Big\|\b_0(t)\| s^{- \theta_0} \a(s) K(s,f)\|_{ \widetilde{F}(0,t)}\Big\|_{\widetilde{E}_0(0,u)}\label{x1L}\\
&+\| \b_0\|_{\widetilde{E}_0(u,\infty)} \|t^{- \theta_0} \a(t) K(t,f)\|_{ \widetilde{F}(0,u)},\nonumber
 \end{eqnarray}
where
$\rho(u) =u^{1-\theta_0}\a(u)\|\b_0\|_{\widetilde{E}_0(u,\infty)}$, $u>0$.
\end{thm}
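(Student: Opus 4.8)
The plan is to deduce Theorem~\ref{5.11.2} from Theorem~\ref{5.10.2} via the symmetry formulae in Lemma~\ref{symLR}, exactly as the sentence preceding the statement suggests. Recall that $(X_0,X_1)^{\mathcal L}_{\theta_0,\b_0,E_0,\a,F}=(X_1,X_0)^{\mathcal R}_{1-\theta_0,\overline{\b_0},E_0,\overline{\a},F}$ and $(X_0,X_1)_{\theta_1,\b_1,E_1}=(X_1,X_0)_{1-\theta_1,\overline{\b_1},E_1}$, where the bars denote $g\mapsto g(1/t)$. Writing $\widetilde{\theta}_0=1-\theta_1$ and $\widetilde{\theta}_1=1-\theta_0$, the hypothesis $0<\theta_0<\theta_1<1$ becomes $0<\widetilde{\theta}_0<\widetilde{\theta}_1<1$, so part~a) of Theorem~\ref{5.10.2} applies to the couple $(X_1,X_0)$ with parameters $\widetilde{\theta}_0,\widetilde{\theta}_1$, functions $\overline{\b_1},\overline{\b_0},\overline{\a}$ and r.i.\ spaces $E_1,E_0,F$, provided the hypothesis $\|\overline{\b_1}\|_{\widetilde{E}_1(0,1)}<\infty$ holds; but $\|\overline{\b_1}\|_{\widetilde{E}_1(0,1)}=\|\b_1\|_{\widetilde{E}_1(1,\infty)}$ and that is \emph{not} assumed in part~a). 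So for part~a) one must instead run the direct argument of Theorem~\ref{5.10.2} (now with the roles of $\theta=0$-type endpoint conditions suitably adjusted), or observe that the hypothesis actually needed in Theorem~\ref{5.10.2}a) is only used to guarantee that $Y_1$ is a genuine intermediate space, and in the present $0<\widetilde\theta_1<1$ case no endpoint condition on $\overline{\b_0}$ is required — so the pertinent statement to symmetrize is the part of Theorem~\ref{5.10.2} corresponding to $0<\theta_0<\theta_1<1$ with the condition $\|\b_0\|_{\widetilde E_0(1,\infty)}<\infty$ playing no role. I would therefore first prove parts a) and b) by the same \textbf{direct} decomposition method used in Theorem~\ref{5.10.2}, and only invoke symmetry as a sanity check.

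Here is the direct route for part~a), mirroring the proof of Theorem~\ref{5.10.2}. Set $Y_0=\overline{X}^{\mathcal L}_{\theta_0,\b_0,E_0,\a,F}$, $Y_1=\overline{X}_{\theta_1,\b_1,E_1}$, and introduce the functionals
\begin{align*}
(P_0f)(u)&=\Big\|\b_0(t)\|s^{-\theta_0}\a(s)K(s,f)\|_{\widetilde F(0,t)}\Big\|_{\widetilde E_0(0,u)},\\
(R_0f)(u)&=\|\b_0\|_{\widetilde E_0(u,\infty)}\,\|t^{-\theta_0}\a(t)K(t,f)\|_{\widetilde F(0,u)},\\
(Q_1f)(u)&=\|t^{-\theta_1}\b_1(t)K(t,f)\|_{\widetilde E_1(u,\infty)},
\end{align*}
together with the ``complementary'' pieces $(Q_0f)(u)$ (the $\widetilde E_0(u,\infty)$-tail of the $\mathcal L$-norm) and $(P_1f)(u)$ (the $\widetilde E_1(0,u)$-head of $\overline X_{\theta_1,\b_1,E_1}$). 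The target is the equivalence $K(\rho(u),f;Y_0,Y_1)\sim (P_0f)(u)+(R_0f)(u)+\rho(u)(Q_1f)(u)$ with $\rho(u)=u^{\theta_1-\theta_0}\a(u)\|\b_0\|_{\widetilde E_0(u,\infty)}/\b_1(u)$. For the upper bound, pick the standard decomposition $f=g+h$ satisfying \eqref{E-45}–\eqref{45} at scale $u$, and bound $\|g\|_{Y_0}\lesssim (P_0f)(u)+(R_0f)(u)$ by splitting $\|g\|_{Y_0}\le (P_0g)(u)+(Q_0g)(u)$: the $(P_0g)$ part splits further through $(P_0f)(u)+(P_0h)(u)$, the head $(P_0h)(u)$ is handled by $K(t,h)/t\lesssim K(u,f)/u$ and Lemma~\ref{lem1}(i) (producing $u^{-\theta_0}\a(u)\|\b_0\|_{\widetilde E_0(0,u)}K(u,f)$-type terms, which one reabsorbs into $(R_0f)(u)$ via Lemma~\ref{lem1}(iii) or \eqref{e5}), and the tail $(Q_0g)(u)$ is handled by $K(t,g)\lesssim K(u,f)$ and Lemma~\ref{lem1}(i). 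Symmetrically $\rho(u)\|h\|_{Y_1}\lesssim \rho(u)(Q_1f)(u)+\text{(terms reabsorbed into }(P_0f)(u)+(R_0f)(u))$, using \eqref{e1} for the head $(P_1h)(u)$.

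For the lower bound one takes an arbitrary decomposition $f=g+h$ with $g\in Y_0$, $h\in Y_1$, and must show $(P_0h)(u)$, $(R_0h)(u)$ and $\rho(u)(Q_1g)(u)$ are each $\lesssim \|g\|_{Y_0}+\rho(u)\|h\|_{Y_1}$. The key inputs are the pointwise $K$-functional bounds: $K(t,h)\lesssim \frac{t^{\theta_1}}{\b_1(t)}\|h\|_{Y_1}$ from \eqref{eK} (using $0<\theta_1<1$), which controls $(P_0h)(u)$ and $(R_0h)(u)$ after integrating $\|\a(t)\b_0(t)t^{\theta_1-\theta_0}\,\cdots\|$ over $(0,u)$ via Lemma~\ref{lem1}(i) and recognizing $\rho(u)$; and for $(Q_1g)(u)$ the analogue of \eqref{eK2} for $\mathcal L$-spaces, namely $K(t,g)\lesssim \frac{t^{\theta_0}}{\a(t)\|\b_0\|_{\widetilde E_0(t,\infty)}}\|g\|_{Y_0}$ (the $\mathcal L$-counterpart of Lemma~\ref{lemLRK}/\eqref{e5}, obtainable by the symmetry Lemma~\ref{symLR} from \eqref{eK2}), which upon integrating $t^{\theta_0-\theta_1}\b_1(t)/(\a(t)\|\b_0\|_{\widetilde E_0(t,\infty)})$ over $(u,\infty)$ yields $u^{\theta_0-\theta_1}\b_1(u)/(\a(u)\|\b_0\|_{\widetilde E_0(u,\infty)})\|g\|_{Y_0}=\rho(u)^{-1}\|g\|_{Y_0}$. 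Taking the infimum over decompositions gives the result. Parts~b) and~c) follow by the same scheme: in~b) one has $\theta_1=1$, so the monotonicity estimate \eqref{e2} and the $\|\b_1\|_{\widetilde E_1(0,1)}<\infty$ hypothesis replace the role of \eqref{e1}/\eqref{eK} at the right endpoint and $\rho$ acquires the factor $\|\b_1\|_{\widetilde E_1(0,u)}$ in place of $\b_1(u)$; in~c) the space $Y_1=X_1$ makes the $(Q_1g)$ and $(P_1h)$ terms disappear (formally $\b_1\equiv\theta_1\equiv$ the $L_\infty$/trivial choice), leaving only the two $\mathcal L$-type terms, and $\rho(u)=u^{1-\theta_0}\a(u)\|\b_0\|_{\widetilde E_0(u,\infty)}$. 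The main obstacle is bookkeeping: correctly matching each of the several $\widetilde E$-norm pieces against the right term of $\rho(u)[\cdots]+[\cdots]$ and verifying every reabsorption is legitimate (i.e.\ the exponent $\theta_1-\theta_0>0$ is used with the correct sign in Lemma~\ref{lem1}(i)); there is no conceptual difficulty beyond establishing the $\mathcal L$-version of \eqref{eK2}, which is immediate from Lemma~\ref{symLR} applied to \eqref{eK2}.
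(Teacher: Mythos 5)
Your central justification for abandoning the symmetry argument in part~a) is incorrect: you have swapped the roles of the parameters when matching against Theorem~\ref{5.10.2}. After applying Lemma~\ref{symLR}, the couple becomes
$\big((X_1,X_0)_{1-\theta_1,\overline{\b}_1,E_1},\ (X_1,X_0)^{\mathcal R}_{1-\theta_0,\overline{\b}_0,E_0,\overline{\a},F}\big)$,
so the function and r.i.\ space occupying the $\mathcal R$--slot of Theorem~\ref{5.10.2} (the slot labelled $\b_1,E_1$ there) are $\overline{\b}_0$ and $E_0$, \emph{not} $\overline{\b}_1$ and $E_1$. Hence the hypothesis $\|\b_1\|_{\widetilde{E}_1(0,1)}<\infty$ of Theorem~\ref{5.10.2}~a) becomes $\|\overline{\b}_0\|_{\widetilde{E}_0(0,1)}<\infty$, i.e.\ $\|\b_0\|_{\widetilde{E}_0(1,\infty)}<\infty$, which \emph{is} precisely what Theorem~\ref{5.11.2} assumes. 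So the symmetry route is available for all three parts, and indeed this is exactly what the paper does (it proves part~b) by the identity $K(\rho(u),f;Y_0,Y_1)=\rho(u)K(1/\rho(u),f;Y_1,Y_0)$ combined with Lemma~\ref{symLR} and Theorem~\ref{5.10.2}, and states that a) and c) are similar). Your claim that the hypothesis ``$\|\overline{\b}_1\|_{\widetilde{E}_1(0,1)}<\infty$'' is needed but unavailable is simply a mis-matching of indices.

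That said, the direct decomposition argument you then set up is mathematically sound and constitutes a genuinely different (if longer) route: the three functionals $P_0,R_0,Q_1$ and their complementary pieces are the right ones, the $\mathcal L$--analogue of \eqref{eK2} you derive by symmetry from Lemma~\ref{lemLRK} is correct, and the lower-bound estimates via \eqref{eK} and Lemma~\ref{lem1}(i) go through with the sign $\theta_1-\theta_0>0$ used correctly. One bookkeeping point you gloss over: in the upper bound, the tail piece $(Q_0g)(u)=\big\|\b_0(t)\|s^{-\theta_0}\a(s)K(s,g)\|_{\widetilde F(0,t)}\big\|_{\widetilde E_0(u,\infty)}$ cannot be controlled merely by $K(s,g)\lesssim K(u,f)$, since $\|s^{-\theta_0}\a(s)\|_{\widetilde F(0,t)}$ need not be finite; you must first split the inner norm at $s=u$, treat the $(u,t)$--part with $K(s,g)\lesssim K(u,f)$, and absorb the $(0,u)$--part into $(R_0f)(u)$ (using the (quasi-)subadditivity $K(s,g)\lesssim K(s,f)+K(s,h)$ and the bound $K(s,h)\lesssim sK(u,f)/u$ there). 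With that fixed the direct proof is complete, but it buys you nothing the symmetry reduction does not already give.
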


\begin{proof}
We only prove b), the proofs of a) and c)  are similar.  We consider the slowly varying functions $\overline{\b}_i(t)=\b_i(1/t)$, $i=0,1$, and $\overline{\a}(t)=\a(1/t)$.  Recall that $\|f\|_{\widetilde{E}(\frac1{t},\infty)}=\|f(1/s)\|_{\widetilde{E}(0,t)}$ and hence
$$\frac{1}{\rho(u)}=\Big(\frac{1}{u}\Big)^{1-\theta_0}\frac{\|\overline{\b}_1\|_{\widetilde{E}_1(\frac1u,\infty)}}{\overline{\a}(\frac1u)\|\overline{\b}_0\|_{\widetilde{E}_0(0,\frac1u)}},\quad u>0.$$
By \eqref{eKK} and  Lemma \ref{symLR} we have  that  
\begin{align*}
K\big(\rho(u),& f;\overline{X}^{\mathcal L}_{\theta_0, \b_0,E_0,\a,F},\overline{X}_{1,\b_1,E_1}\big)\\
&=\rho(u)K\Big(\frac1{\rho(u)},f;(X_1,X_0)_{0,\overline{\b}_1,E_1},(X_1,X_0)^{\mathcal R}_{1-\theta_0, \overline{\b}_0,E_0,\overline{\a},F}\Big).
\end{align*}
Now applying Theorem \ref{5.10.2} c)  we obtain the estimate
\begin{align*}
K\big(\rho(u), f;\overline{X}^{\mathcal L}_{\theta_0, \b_0,E_0,\a,F},\overline{X}_{1,\b_1,E_1}\big)&\sim\rho(u)\|\overline{\b}_1(t)K(t,f;X_1,X_0)\|_{\widetilde{E}_1(0,\frac1u)}\\&+\|\overline{\b}_0\|_{\widetilde{E}_0(0,\frac1u)}\|t^{\theta_0-1}\overline{a}(t)K(t,f;X_1,X_0)\|_{\widetilde{F}(\frac1u,\infty)}\\
& + \Big\| \overline{\b}_0(t) \|s^{\theta_0-1} \overline{\a}(s) K(s,f;X_1,X_0)\|_{ \widetilde{F}(t,\infty)}\Big\|_{ \widetilde{E}_0(\frac1u,\infty)}.\end{align*}
Finally the relations $\|f\|_{\widetilde{E}(\frac1{t},\infty)}=\|f(1/s)\|_{\widetilde{E}(0,t)}$ and \eqref{eKK} give the desired equivalence.

\end{proof}
%%%%%%%%%%%%%%%%%%%%%%%%%%%%%%%%%%%%%%%%%%%%%%%%%%%%%%%%%%%%%%%%%%%%%%%
%%%%%%%%%%%%%%%%%%%%%%%%%%%%%%%%%%%%%%%%%%%%%%%%%%%%%%%%%%%%%%%%%%%%%%%

\section{Reiteration formulae for ${\mathcal R}$ and ${\mathcal L}$-spaces.}\label{sereiteration}

The aim of this section is to identify the spaces
$$(\overline{X}_{\theta_0,\b_0,E_0}, \overline{X}^{\mathcal R}_{\theta_1, \b_1,E_1,\a,F})_{\theta,\b,E}\mand(\overline{X}^{\mathcal L}_{\theta_0, \b_0,E_0,\a,F}, \overline{X}_{\theta_1,\b_1,E_1})_{\theta,\b,E}$$
for all possible values of $\theta\in[0,1]$. In that process the lemmas that we collect in the next subsection play a key role.

\subsection{Lemmas}\label{lemmas}\hspace{1mm}

\vspace{2mm}
Next three lemmas  can be found in \cite{FMS-1,FMS-RL1}.
 
\begin{lem}\label{Le51}\cite[Lemma 4.1]{FMS-RL1}
Let $E$ be an r.i. space, $\a$, $\b\in SV$, $0\leq \theta\leq 1$, $0<\alpha<1$  and consider the function $\rho(u)=u^{\alpha}\a(u)$, $u>0$. Then, the equivalence
$$\big\|  \rho(u)^{- \theta}  \b(\rho(u)) K( \rho(u), f)  \big\|_{\widetilde{E}}\sim\big\|  u^{- \theta}  \b(u) K( u, f)  \big\|_{\widetilde{E}}$$
hold for all  $f\in X_0+X_1$, with equivalent constant independent of $f$.
\end{lem}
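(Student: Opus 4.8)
The plan is to prove the change-of-variables equivalence by a substitution argument together with the slowly varying estimates of Lemma \ref{lem0} and Lemma \ref{lem1}. First I would observe that $\rho(u)=u^\alpha\a(u)$ with $0<\alpha<1$ is (equivalent to) a strictly increasing bijection of $(0,\infty)$ onto itself: by Definition \ref{def1}, $u\leadsto u^\alpha\a(u)$ is equivalent to a nondecreasing function, and since $\rho(u)\gtrsim u^{\alpha-\varepsilon}\to 0$ as $u\to0^+$ and $\rho(u)\gtrsim u^{\alpha+\varepsilon}\cdot(\text{something})$... more carefully, choosing $\varepsilon<\min\{\alpha,1-\alpha\}$, Lemma \ref{lem0}(iii)-type bounds give $\rho(u)\to 0$ as $u\to 0$ and $\rho(u)\to\infty$ as $u\to\infty$, so $\rho$ has a well-defined (quasi-)inverse $\rho^{-1}$, which is itself of the form $v^{1/\alpha}\a_1(v)$ for a suitable $\a_1\in SV$ (this uses Lemma \ref{lem0}(i),(ii) to see that the inverse of such a function stays in the same class). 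Since all our spaces are rearrangement invariant with respect to the homogeneous measure $\tfrac{dt}{t}$, and the map $u\mapsto\rho(u)$ is — up to equivalence — a composition of a dilation-type change with a slowly varying perturbation, I expect the key point to be that such a change of variable preserves $\widetilde E$-norms up to equivalence.

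The main technical step is therefore the following: writing $g(u):=u^{-\theta}\b(u)K(u,f)$ and $h(v):=v^{-\theta}\b(v)K(v,f)$, I want $\|g\circ\rho\|_{\widetilde E}\sim\|h\|_{\widetilde E}$, i.e. $\|(h\circ\rho)\cdot w\|_{\widetilde E}\sim\|h\|_{\widetilde E}$ where $w(u)=\rho(u)^{-\theta}\b(\rho(u))\big/\big(u^{-\theta}\b(u)\big)\cdot(\text{correction from }K)$ — but actually it is cleaner to keep $K(\rho(u),f)$ intact and only massage the weight. The natural route is: use $\rho(u)^{-\theta}=u^{-\alpha\theta}\a(u)^{-\theta}$ and $\b(\rho(u))=\b(u^\alpha\a(u))\sim\b(u)\cdot(\text{an }SV\text{ factor})$ by Remark \ref{rem23} combined with Lemma \ref{lem0}(ii), so that $\rho(u)^{-\theta}\b(\rho(u))\sim u^{-\alpha\theta}\tilde\b(u)$ for some $\tilde\b\in SV$; hence the left-hand side is equivalent to $\|u^{-\alpha\theta}\tilde\b(u)K(\rho(u),f)\|_{\widetilde E}$. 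Now I make the substitution $v=\rho(u)$, i.e. $u=\rho^{-1}(v)=v^{1/\alpha}\a_1(v)$; since the measure $\tfrac{du}{u}$ transforms into a measure comparable to $\tfrac{dv}{v}$ (the Jacobian of a slowly-varying-perturbed power is again a power times an $SV$ function, and one checks $d(\log u)\sim d(\log v)$ in the appropriate averaged sense, or more robustly: the decreasing rearrangement of a function with respect to $\tfrac{du}{u}$ under this substitution has controlled rearrangement), the $\widetilde E$-norm is preserved. After the substitution the integrand becomes $v^{-\theta}\b_2(v)K(v,f)$ for some $\b_2\in SV$ built from $\tilde\b,\a_1$, and a final application of the $SV$ calculus identifies $\b_2\sim\b$ up to the $SV$ factors that wash out.

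Since the cleanest way to make the measure-transformation rigorous is to avoid Jacobians altogether, I would instead reduce to the couple $(L_1,L_\infty)$: every r.i. space $\widetilde E$ is obtained by an exact interpolation functor applied to $(\widetilde L_1,L_\infty)$, so it suffices to prove the equivalence for $\widetilde E=\widetilde L_1$ and $\widetilde E=L_\infty$, where it is a one-line computation — for $L_\infty$ both sides are $\sup$ of comparable quantities over $u$ vs. over $v=\rho(u)$ (a bijection), and for $\widetilde L_1$ it is $\int_0^\infty\rho(u)^{-\theta}\b(\rho(u))K(\rho(u),f)\tfrac{du}{u}$ versus $\int_0^\infty v^{-\theta}\b(v)K(v,f)\tfrac{dv}{v}$, where now an honest change of variables $v=\rho(u)$, $\tfrac{dv}{v}=\big(\alpha+\tfrac{u\a'(u)}{\a(u)}\big)\tfrac{du}{u}$ and the bound $\alpha-\varepsilon\lesssim\alpha+\tfrac{u\a'(u)}{\a(u)}\lesssim\alpha+\varepsilon$ (valid for $SV$ functions, after replacing $\a$ by an equivalent differentiable one) gives two-sided control. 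Then exactness of the functor lifts the equivalence to arbitrary $\widetilde E$, with constants depending only on $\alpha$, $\a$, $\b$, $\theta$ and not on $f$. The main obstacle I anticipate is precisely the justification that the substitution $v=\rho(u)$ respects the homogeneous measure up to a bounded multiplicative factor; the reduction to $(\widetilde L_1,L_\infty)$ together with the elementary fact that $u\a'(u)/\a(u)$ is bounded for a smooth representative of an $SV$ function is what makes this go through cleanly, and is the step I would write out in most detail.
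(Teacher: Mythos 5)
Your overall strategy --- reduce to the endpoint couple $(\widetilde{L}_1,L_\infty)$ and lift by exact interpolation --- is a valid route, but as written there are two genuine gaps, and they occur precisely at the points where the hypotheses of the Lemma actually matter.

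First, you treat $\rho(u)=u^\alpha\a(u)$ as a smooth increasing bijection with bounded logarithmic derivative ``after replacing $\a$ by an equivalent differentiable one.'' Definition \ref{def1} only says $\rho$ is \emph{equivalent} to a non-decreasing function; $\rho$ itself need not be injective, and a smooth equivalent $SV$-function need not have a bounded logarithmic derivative. Indeed $\a(u)=\exp(|\log u|^{1/2})$ belongs to $SV$ and is smooth away from $u=1$, yet $u\a'(u)/\a(u)=\pm\tfrac12|\log u|^{-1/2}\to\pm\infty$ as $u\to1$, and $\rho(u)=u^\alpha\a(u)$ fails to be injective in a neighbourhood of $u=1$ when $\alpha$ is small. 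So the claim that one can choose a representative with $\alpha-\varepsilon\lesssim\alpha+u\a'(u)/\a(u)\lesssim\alpha+\varepsilon$ is not an ``elementary fact'': it requires a genuine regularization step, e.g.\ replacing $\rho$ by the geometric average $\hat\rho(u)=\exp\bigl(\tfrac{1}{\log N}\int_u^{Nu}\log\rho(v)\,\tfrac{dv}{v}\bigr)$ for $N$ large, together with Lemma \ref{lem0}\,(iii), to get $\hat\rho\sim\rho$ increasing, differentiable and with $u\hat\rho'(u)/\hat\rho(u)$ pinched between positive constants. Second --- and this is the more serious omission --- exact interpolation then yields $\|g\circ\hat\rho\|_{\widetilde E}\sim\|g\|_{\widetilde E}$ for \emph{all} $g$, but only for the regularized $\hat\rho$; the operator $g\mapsto g\circ\rho$ with the \emph{original} $\rho$ is in general not an isomorphism of $\widetilde L_1$ (the example above already breaks it). To pass back you must argue that
\[
\rho(u)^{-\theta}\b(\rho(u))K(\rho(u),f)\;\sim\;\hat\rho(u)^{-\theta}\b(\hat\rho(u))K(\hat\rho(u),f)
\]
pointwise, and this uses that $t\mapsto K(t,f)$ is non-decreasing, $t\mapsto K(t,f)/t$ is non-increasing, and $\b\in SV$ (Lemma \ref{lem0}\,(iii)). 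That pointwise comparison is false for a generic element of $\widetilde L_1+L_\infty$, so it is exactly the place where the structure of the function being normed --- and hence the content of the Lemma --- enters; your proposal never states it. With the smoothing step done properly and this transfer step added, the argument does close, but as written both pieces are elided.
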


\begin{lem}\label{key2hardy}\cite[Lemma 2.4]{FMS-1}
Let $\b\in SV$, $\varphi$ a quasi-concave function and $\alpha\in\R$. Then, for any r.i. $E$ and any $t>0$,
\begin{equation}\label{022}
\|s^\alpha\b(s)\varphi(s)\|_{\widetilde{E}(0,t)}
\lesssim \int_0^t s^\alpha\b(s)\varphi(s)\, \frac{ds}{s}
\end{equation}
and
\begin{equation}\label{023}
\|s^\alpha\b(s)\varphi(s)\|_{\widetilde{E}(t,\infty)}
\lesssim \int_t^\infty s^\alpha\b(s)\varphi(s)\, \frac{ds}{s}.
\end{equation}
\end{lem}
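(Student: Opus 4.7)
The plan is to combine the quasi-concavity of $\varphi$ with the slow variation of $\b$ and Lemma \ref{lem1}(i). Quasi-concavity yields two-sided pointwise control: on $(0,t)$ we have $(s/t)\varphi(t)\lesssim\varphi(s)\lesssim\varphi(t)$ (the first inequality from $\varphi(\cdot)/\cdot$ being essentially non-increasing and the second from $\varphi$ being essentially non-decreasing), while on $(t,\infty)$ the inequalities reverse to $\varphi(t)\lesssim\varphi(s)\lesssim(s/t)\varphi(t)$.

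For \eqref{022}, the upper bound on $\varphi$ combined with Lemma \ref{lem1}(i) gives
\[
\|s^\alpha\b(s)\varphi(s)\|_{\widetilde{E}(0,t)}\lesssim\varphi(t)\,\|s^\alpha\b(s)\|_{\widetilde{E}(0,t)}\sim\varphi(t)\,t^\alpha\b(t),
\]
valid when $\alpha>0$. Symmetrically, using the lower bound on $\varphi$ together with Lemma \ref{lem1}(i) applied with $\widetilde{E}=\widetilde{L}_1$ (i.e., Karamata's theorem) produces
\[
\int_0^t s^\alpha\b(s)\varphi(s)\frac{ds}{s}\gtrsim\frac{\varphi(t)}{t}\int_0^t s^{\alpha+1}\b(s)\frac{ds}{s}\sim\varphi(t)\,t^\alpha\b(t),
\]
valid when $\alpha+1>0$. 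The two estimates yield \eqref{022} whenever $\alpha>0$. For $\alpha\leq-1$ the RHS integral diverges (by Karamata, since $\int_0^t s^{\alpha+1}\b(s)\tfrac{ds}{s}=\infty$ forces the RHS $=\infty$ via the $\varphi$ lower bound), and the inequality is trivial. The delicate range $-1<\alpha\leq 0$ I would handle by a dyadic decomposition of $(0,t)$ into $I_k=(t/2^{k+1},t/2^k)$, $k\geq 0$: on each $I_k$ the function $\b$ is essentially constant by Lemma \ref{lem0}(iii) and $\varphi$ is essentially constant by the quasi-concavity bounds, so that $\|s^\alpha\b(s)\varphi(s)\|_{\widetilde{E}(I_k)}$ can be estimated locally by a multiple of $(t/2^k)^\alpha\b(t/2^k)\varphi(t/2^k)$ times the $\widetilde{E}$-measure of the interval $I_k$ (which is a constant depending only on $\widetilde{E}$), and then summed, the resulting geometric series being controlled termwise by the corresponding pieces of the integral on the RHS.

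The estimate \eqref{023} follows by an entirely parallel argument on $(t,\infty)$: the upper bound $\varphi(s)\lesssim(s/t)\varphi(t)$ together with Lemma \ref{lem1}(i) yields $\|s^\alpha\b(s)\varphi(s)\|_{\widetilde{E}(t,\infty)}\lesssim\varphi(t)\,t^\alpha\b(t)$ when $\alpha+1<0$, matched below by the lower bound on $\varphi$ and Karamata when $\alpha<0$. The case $\alpha\geq 0$ is trivial (RHS infinite via the lower bound on $\varphi$), and the borderline $-1\leq\alpha<0$ is again handled dyadically, now at scales $(2^kt,2^{k+1}t)$.

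The main obstacle is precisely the borderline range where Lemma \ref{lem1}(i) cannot be invoked directly because the weighted norm of $s^\alpha\b(s)$ over the relevant half-line is infinite. The dyadic decomposition is designed to sidestep this singularity by replacing a single infinite norm with a summable family of finite local norms, each comparable to the corresponding piece of the RHS integral via slow variation and quasi-concavity. A secondary, purely bookkeeping, difficulty is keeping track of which of the two pointwise bounds on $\varphi$ is being used to majorize the LHS and which to minorize the RHS, since these exchange roles between \eqref{022} and \eqref{023}.
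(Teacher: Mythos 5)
The core of your proposal — the dyadic decomposition — is correct and is in fact the whole story: on each $I_k=(t/2^{k+1},t/2^k)$ the factors $s^\alpha$, $\b(s)$ (by Lemma \ref{lem0}(iii)) and $\varphi(s)$ (by quasi-concavity: $\varphi(t/2^{k+1})\le\varphi(s)\le\varphi(t/2^k)\le 2\varphi(t/2^{k+1})$) are all comparable to their values at $t/2^k$ with absolute constants, so writing $a_k:=(t/2^k)^\alpha\b(t/2^k)\varphi(t/2^k)$ one gets $\|s^\alpha\b(s)\varphi(s)\|_{\widetilde E(I_k)}\sim a_k\|\chi_{I_k}\|_{\widetilde E}$ and $\int_{I_k}s^\alpha\b(s)\varphi(s)\,ds/s\sim a_k\log 2$; since $\nu(I_k)=\log 2$ for every $k$, $\|\chi_{I_k}\|_{\widetilde E}=\varphi_{\widetilde E}(\log 2)$ is a constant, and the triangle inequality together with the Fatou property gives $\|s^\alpha\b(s)\varphi(s)\|_{\widetilde E(0,t)}\le\sum_k\|s^\alpha\b(s)\varphi(s)\|_{\widetilde E(I_k)}\lesssim\sum_k a_k\lesssim\int_0^t s^\alpha\b(s)\varphi(s)\,ds/s$. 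This argument works verbatim for \emph{every} $\alpha\in\R$ (and likewise at scales $(2^kt,2^{k+1}t)$ for \eqref{023}), so your preliminary case analysis via Lemma \ref{lem1}(i) is superfluous.

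That said, as written your case analysis has a small but genuine gap at the boundary. You claim that for $\alpha\le -1$ the right-hand side of \eqref{022} is automatically infinite. This is true for $\alpha<-1$, since then $\varphi(s)\gtrsim(s/t)\varphi(t)$ and $\int_0^t s^{\alpha+1}\b(s)\,ds/s=\infty$; but at $\alpha=-1$ the lower bound only yields $\int_0^t\b(s)\,ds/s$ (up to a factor $\varphi(t)/t$), which can very well be finite — e.g.\ $\b=\ell^{-2}$. If in addition $\varphi(0^+)=0$ (say $\varphi(s)=s$) the right-hand side is then finite and the inequality is not trivial. You have excluded $\alpha=-1$ from your dyadic range $-1<\alpha\le 0$, so this value is not covered by any of your cases. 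The symmetric issue occurs at $\alpha=0$ in your treatment of \eqref{023}. Both gaps disappear the moment you drop the case split and run the dyadic argument for all $\alpha$, which, as noted, costs nothing.

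One last stylistic caution: what you call ``the $\widetilde E$-measure of $I_k$'' should be the norm $\|\chi_{I_k}\|_{\widetilde E}$ (equivalently the fundamental function of $\widetilde E$ at $\nu(I_k)=\log 2$), and ``the resulting geometric series'' is not actually geometric — the relevant fact is only that each term $a_k$ is comparable to the corresponding dyadic piece of the integral, so the termwise comparison is what closes the estimate, not geometric decay.
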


\begin{lem}\label{teHardy1}\cite[Lemma 2.5]{FMS-1}
Let $\b\in SV$ and $\alpha>0$. Then, for any r.i. $E$, the inequalities
\begin{equation}\label{eHardy1}
\Bigl\|t^{-\alpha}\b(t)\int_0^t
f(s)\,ds\Bigr\|_{\widetilde{E}}\lesssim
\|t^{1-\alpha}\b(t)f(t)\|_{\widetilde{E}}
\end{equation}
and
\begin{equation}\label{eHardy2}
\Bigl\|t^\alpha\b(t)\int_t^\infty
f(s)\,ds\Bigr\|_{\widetilde{E}}\lesssim
\|t^{1+\alpha}\b(t)f(t)\|_{\widetilde{E}}
\end{equation}
hold for all positive  measurable functions $f$ on $(0,\infty)$.
\end{lem}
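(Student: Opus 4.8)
The plan is to deduce both inequalities from a single mechanism: write the Hardy operator as an average over a bounded interval, apply Minkowski's integral inequality (valid for any Banach function norm), and then exploit that $\widetilde{E}$ is \emph{dilation invariant}. The latter holds because $\widetilde{E}$ is rearrangement invariant with respect to the measure $d\nu(t)=dt/t$, which is invariant under the dilations $t\mapsto ct$; hence $g(c\,\cdot)$ and $g(\cdot)$ are equimeasurable with respect to $\nu$, so the substitution operator $g\mapsto g(c\,\cdot)$ is an isometry of $\widetilde{E}$ for every $c>0$. We may of course assume in each case that the right-hand side is finite, so that the quantities involved are well defined.

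For \eqref{eHardy1}, the substitution $s=tr$, $r\in(0,1)$, gives
\[
t^{-\alpha}\b(t)\int_0^t f(s)\,ds=\int_0^1 t^{1-\alpha}\b(t)f(tr)\,dr ,
\]
so by Minkowski's integral inequality $\bigl\|t^{-\alpha}\b(t)\int_0^t f(s)\,ds\bigr\|_{\widetilde{E}}\le\int_0^1\bigl\|t^{1-\alpha}\b(t)f(tr)\bigr\|_{\widetilde{E}}\,dr$. For fixed $r$, the change of variable $\tau=tr$ together with the dilation invariance of $\widetilde{E}$ and the bound $\b(\tau/r)\le C_\epsilon\,r^{-\epsilon}\b(\tau)$ — which follows from Lemma~\ref{lem0}(iii) since $1/r>1$ — yields $\bigl\|t^{1-\alpha}\b(t)f(tr)\bigr\|_{\widetilde{E}}\le C_\epsilon\,r^{\alpha-1-\epsilon}\bigl\|t^{1-\alpha}\b(t)f(t)\bigr\|_{\widetilde{E}}$. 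Integrating in $r$ and choosing $\epsilon\in(0,\alpha)$, which is possible precisely because $\alpha>0$, makes $\int_0^1 r^{\alpha-1-\epsilon}\,dr$ finite and produces \eqref{eHardy1}.

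The inequality \eqref{eHardy2} is treated in exactly the same way, now with the substitution $s=t/r$, $r\in(0,1)$, which gives $t^\alpha\b(t)\int_t^\infty f(s)\,ds=\int_0^1 r^{-2}\,t^{1+\alpha}\b(t)f(t/r)\,dr$; after Minkowski one estimates, for fixed $r$, via $\tau=t/r$, dilation invariance, and $\b(r\tau)\le C_\epsilon\,r^{-\epsilon}\b(\tau)$, obtaining $\bigl\|t^{1+\alpha}\b(t)f(t/r)\bigr\|_{\widetilde{E}}\le C_\epsilon\,r^{1+\alpha-\epsilon}\bigl\|t^{1+\alpha}\b(t)f(t)\bigr\|_{\widetilde{E}}$, and the Jacobian factor $r^{-2}$ brings the $r$-integral back to $\int_0^1 r^{\alpha-1-\epsilon}\,dr$, again finite for $\epsilon<\alpha$. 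The only delicate point is the bookkeeping of exponents in the slowly varying factor: one must apply Lemma~\ref{lem0}(iii) on the side $r<1$ so that $\b$ evaluated at the dilated argument costs only an extra $r^{-\epsilon}$ with $\epsilon$ arbitrarily small, which is what guarantees convergence of the final power integral in $r$. The dilation-invariance and Minkowski steps themselves are entirely standard.
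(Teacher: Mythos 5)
Your proof is correct, and it uses the standard mechanism (rewrite the Hardy operator as an average over a fixed interval, apply Minkowski's integral inequality, then use dilation invariance of $\widetilde{E}$ together with Lemma~\ref{lem0}(iii) to absorb the shifted slowly varying factor at the cost of an arbitrarily small power of $r$, which the hypothesis $\alpha>0$ then makes integrable). The paper does not reproduce a proof, citing \cite[Lemma 2.5]{FMS-1} instead, but this is precisely the argument used there, so the approaches coincide.
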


Reiteration results of next subsections do not follow from Theorems 4.4, 4.5, 4.7 of \cite{FMS-RL1} and the usual symmetry argument, since the order of the parameters $\theta_0<\theta_1$ is crucial. However, similar proofs can be carried out now using additionally the following lemma from \cite{FMS-4}. 

\begin{lem}\cite[Theorem 3.6]{FMS-4}\label{thmFMS-4}
Let $E$, $F$ be r.i. spaces, $a, b\in SV$ and  $\alpha, \beta \in \R$ with
$\beta>0$. Then, the  equivalence
\begin{equation*}
\Big \|  t^{\beta} \b(t)  \| s^{\alpha}  \a(s)  f(s)\|_{\widetilde{F}(t, \infty)} \Big \|_{\widetilde{E}}  \sim  \|  t^{\alpha + \beta} \a(t)  \b(t)  f(t) \|_{\widetilde{E}}
\end{equation*}
holds for all positive and non-increasing measurable function $f$ on $(0,\infty)$.
\end{lem}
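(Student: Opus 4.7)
The plan is to prove the equivalence by establishing the two directions separately. Monotonicity of $f$ will let us pinch the inner norm between values at geometrically-spaced points, while the dilation invariance of the measure $dt/t$ (hence of any $\widetilde{E}$-norm under $t\mapsto \lambda t$) and the slow variation of $\a,\b$ provided by Lemma~\ref{lem0}(iii) will absorb the resulting changes of scale.

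For the \emph{lower bound} I would restrict the inner norm to the window $(t,2t)$: on this interval $s^\alpha\a(s)\sim t^\alpha\a(t)$ by slow variation of $\a$, and $f(s)\geq f(2t)$ by non-increasingness, so
$$\|s^\alpha \a(s) f(s)\|_{\widetilde{F}(t,\infty)}\;\geq\;\|s^\alpha \a(s) f(s)\|_{\widetilde{F}(t,2t)}\;\gtrsim\; t^\alpha \a(t) f(2t)\,\|\chi_{(t,2t)}\|_{\widetilde{F}},$$
the last factor being a constant independent of $t$ by dilation invariance of $dt/t$. Applying $\|t^\beta \b(t)\,\cdot\,\|_{\widetilde{E}}$, substituting $u=2t$, and using $\a(u/2)\sim\a(u)$, $\b(u/2)\sim\b(u)$ yields the target lower estimate.

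For the \emph{upper bound} I would decompose $(t,\infty)=\bigcup_{k\geq 0}(2^k t,2^{k+1}t]$ and apply the $\widetilde{F}$-triangle inequality. On each dyadic shell $s^\alpha\a(s)\sim(2^kt)^\alpha\a(2^kt)$, $f(s)\leq f(2^kt)$ by monotonicity, and $\|\chi_{(2^kt,2^{k+1}t)}\|_{\widetilde{F}}$ is a $k$-independent constant, producing
$$\|s^\alpha \a(s) f(s)\|_{\widetilde{F}(t,\infty)}\;\lesssim\;\sum_{k\geq 0}(2^kt)^\alpha\a(2^kt)\,f(2^kt).$$
Multiplying by $t^\beta\b(t)$, applying the $\widetilde{E}$-triangle inequality, and changing variable $u=2^kt$ term by term (dilation invariance of $\widetilde{E}$) reduces the problem to bounding
$$\sum_{k\geq 0}2^{-k\beta}\,\|u^{\alpha+\beta}\,\a(u)\,\b(2^{-k}u)\,f(u)\|_{\widetilde{E}}.$$
Lemma~\ref{lem0}(iii) now gives $\b(2^{-k}u)\lesssim 2^{k\epsilon}\b(u)$ for any $\epsilon>0$, converting the sum to $\Big(\sum_k 2^{-k(\beta-\epsilon)}\Big)\|u^{\alpha+\beta}\a(u)\b(u)f(u)\|_{\widetilde{E}}$.

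The main obstacle lies in this last step: one must choose $\epsilon\in(0,\beta)$ so that the geometric series converges, and this is precisely where the hypothesis $\beta>0$ is indispensable. With $\beta\leq 0$ the dyadic tail cannot be summed, and indeed the equivalence genuinely fails in that range, a phenomenon familiar from the classical Holmstedt situation. Observe also that no assumption is needed on the sign of $\alpha$, since the slow variation of $\a$ and the dilation invariance of $\widetilde{F}$ make the constants on each shell uniform in $k$.
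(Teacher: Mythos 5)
Your proof is correct. Note first that the paper itself does not prove this lemma; it imports it verbatim as \cite[Theorem~3.6]{FMS-4}, so there is no in-paper argument to compare against. Your dyadic-decomposition argument is a clean, self-contained derivation, and it is a natural one: it is essentially the standard technique for proving weighted Hardy-type inequalities in the r.i.\ / slowly-varying setting.

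Checking the details: the lower bound via restriction to $(t,2t)$ works because $\nu(t,2t)=\log 2$ is independent of $t$, so the dilation invariance of $\nu$ (hence of $\|\cdot\|_{\widetilde F}$) makes $\|\chi_{(t,2t)}\|_{\widetilde F}$ a genuine constant; combined with $f(s)\ge f(2t)$ and slow variation of $\a$ this gives the pointwise bound, and the change of variable $u=2t$ together with $\a(u/2)\sim\a(u)$, $\b(u/2)\sim\b(u)$ finishes it. For the upper bound, the countable triangle inequality over disjoint shells is legitimate by the Fatou property assumed for all r.i.\ spaces in the paper, the shell-by-shell change of variable is again dilation invariance of $\widetilde E$, and the crucial step of pulling $\b(2^{-k}u)\lesssim 2^{k\epsilon}\b(u)$ out via Lemma~\ref{lem0}(iii) and then choosing $\epsilon\in(0,\beta)$ so the geometric series $\sum_k 2^{-k(\beta-\epsilon)}$ converges is exactly where $\beta>0$ is used. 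You correctly observe that $\alpha$ may have either sign, since on each shell $s^\alpha\sim(2^kt)^\alpha$ with constants depending only on $|\alpha|$, not on $k$. The only cosmetic remark is that the lower bound does not actually need $\beta>0$; that hypothesis is only for the upper estimate, which you make clear. No gaps.
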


%%%%%%%%%%%%%%%%%%%%%%%%%%%%%%%%%%%%%%%%%%%%%%%%%%%%%%%%%%%%%%%%%%%%%%%%%%%%%%%%%%%%%%%%%%%%%%%%%%%%%%%%%%%%%%%%%%%%%%%%%%%%%%%%%%%%%%%%%%%%%%%%%%%%%%%%%%%%%%%%%%%%%%%%%%%%%%%%%%%%%%%%%%%%%%%%%%%%%%%%%%%%%%%%%%%%%%%%%%%%%%%%%%%%%%%%%%%%%

\subsection{The space $( \overline{X}_{\theta_0,\b_0,E_0},\overline{X}^{\mathcal R}_{\theta_1, \b_1,E_1,\a,F})_{\theta,\b,E}$, $0\leq \theta_0<\theta_1<1$ and $0\leq \theta\leq 1$}\label{subR}\hspace{1mm}

%\vspace{2mm}
%In the present subsection we  identify the interpolation space
%$$(\overline{X}_{\theta_0,\b_0,E_0},\overline{X}^{\mathcal R}_{\theta_1, \b_1,E_1,\a,F})_{\theta,\b,E}$$
%for all possible values of $\theta\in[0,1]$, when $0\leq \theta_0<\theta_1<1$ and also when the space $\overline{X}_{\theta_0,\b_0,E_0}$ is  $X_0$.

\begin{thm}\label{thm5.9}
Let $0<\theta_0<\theta_1<1$ and let $E$, $E_0$, $E_1,\ F$ be r.i. spaces.
Let $\a$, $\b$,  $\b_0$, $\b_1\in SV$ with $\b_1$ satisfying  $\|\b_1\|_{\widetilde{E}_1(0,1)}<\infty$ and consider the function
$$\rho(u) = u^{\theta_1-\theta_0}\frac{\b_0(u)}{\a(u)\|\b_1\|_{\widetilde{E}_1(0,u)} },\quad u>0.$$

\vspace{1mm}
\noindent a) If $0<\theta<1$, then
$$( \overline{X}_{\theta_0,\b_0,E_0},\overline{X}^{\mathcal R}_{\theta_1, \b_1,E_1,\a,F})_{\theta,\b,E}= \overline{X}_{\tilde{\theta},B_\theta,E},$$
where
$$\tilde{\theta}=(1-\theta)\theta_0+\theta\theta_1\quad \mbox{and} \quad B_\theta(u)=\big(\b_0(u)\big)^{1-\theta}\big(\a(u)\|\b_1\|_{\widetilde{E}_1(0,u)}\big)^{\theta}\b(\rho(u)),\ u>0.$$

\vspace{1mm}
\noindent b) If $\theta=0$ and $\|\b\|_{\widetilde{E}(1,\infty)}<\infty$, then
\begin{equation}\label{einter1}
( \overline{X}_{\theta_0,\b_0,E_0},\overline{X}^{\mathcal R}_{\theta_1, \b_1,E_1,\a,F})_{0,\b,E}= \overline{X}^{\mathcal L}_{\theta_0,\b\circ \rho,E,\b_0,E_0}.
\end{equation}

\vspace{1mm}
\noindent c) If $\theta=1$ and $\|\b\|_{\widetilde{E}(0,1)}<\infty$, then
$$
(\overline{X}_{\theta_0,\b_0,E_0},\overline{X}^{\mathcal R}_{\theta_1, \b_1,E_1,\a,F})_{1,\b,E}= \overline{X}^{\mathcal R}_{\theta_1,B_1,E,a,F}\cap\overline{X}^{\mathcal R,\mathcal R}_{\theta_1,\b\circ\rho,E,\b_1,E_1,\a,F},
$$
where $B_1(u)=\|\b_1\|_{\widetilde{E}_1(0,u)}\b(\rho(u))$, $u>0$.
\end{thm}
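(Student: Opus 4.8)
\textbf{Plan of proof of Theorem \ref{thm5.9} c).}
The plan is to derive the identity directly from the Holmstedt type formula of Theorem \ref{5.10.2} a) together with the change of variable lemmas from \S\ref{lemmas}. Write $Y_0=\overline{X}_{\theta_0,\b_0,E_0}$, $Y_1=\overline{X}^{\mathcal R}_{\theta_1,\b_1,E_1,\a,F}$, and recall that, by Definition \ref{defrealmethod},
$$
\|f\|_{(Y_0,Y_1)_{1,\b,E}}=\big\|\,\b(\tau)\,\tau^{-1}K(\tau,f;Y_0,Y_1)\,\big\|_{\widetilde{E}}.
$$
First I would substitute $\tau=\rho(u)$; since $\rho(u)=u^{\theta_1-\theta_0}\,\a(u)^{-1}\b_0(u)\|\b_1\|_{\widetilde{E}_1(0,u)}^{-1}$ is of the form $u^{\alpha}(\text{$SV$-function})$ with $\alpha=\theta_1-\theta_0\in(0,1)$ (here one uses Lemma \ref{lem1} (ii) to see $\|\b_1\|_{\widetilde{E}_1(0,u)}\in SV$, and Lemma \ref{lem0} (i) for the reciprocal and product), Lemma \ref{Le51} applies with $\theta=1$ and lets me pass from the $\widetilde{E}$-norm in $\tau$ to a $\widetilde{E}$-norm in $u$ up to equivalence constants. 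Thus it suffices to estimate
$$
\big\|\,\b(\rho(u))\,\rho(u)^{-1}K(\rho(u),f;Y_0,Y_1)\,\big\|_{\widetilde{E}}.
$$
Now I plug in Theorem \ref{5.10.2} a): $K(\rho(u),f;Y_0,Y_1)$ is equivalent to a sum of three terms, the first divided by $\rho(u)$ giving
$$
\rho(u)^{-1}\|t^{-\theta_0}\b_0(t)K(t,f)\|_{\widetilde{E}_0(0,u)},
$$
and the other two (after the factor $\rho(u)$ cancels) giving
$$
\|\b_1\|_{\widetilde{E}_1(0,u)}\,\|t^{-\theta_1}\a(t)K(t,f)\|_{\widetilde{F}(u,\infty)}
\quad\text{and}\quad
\big\|\b_1(t)\|s^{-\theta_1}\a(s)K(s,f)\|_{\widetilde{F}(t,\infty)}\big\|_{\widetilde{E}_1(u,\infty)}.
$$
Applying $\|\b(\rho(u))\,(\cdot)\|_{\widetilde{E}}$ to the last two summands produces exactly the quasi-norms $\|f\|^{\mathcal R}_{\theta_1,B_1,E,\a,F}$ and $\|f\|^{\mathcal R,\mathcal R}_{\theta_1,\b\circ\rho,E,\b_1,E_1,\a,F}$, once one absorbs $\|\b_1\|_{\widetilde{E}_1(0,u)}$ into $\b(\rho(u))$ to form $B_1(u)=\|\b_1\|_{\widetilde{E}_1(0,u)}\b(\rho(u))$; note $B_1\in SV$ again by Lemmas \ref{lem0}, \ref{lem1}. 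This already gives the inclusion "$\hookrightarrow$" for those two pieces, and the reverse for them, so the whole point reduces to showing that the first term,
$$
T(f):=\big\|\,\b(\rho(u))\,\rho(u)^{-1}\|t^{-\theta_0}\b_0(t)K(t,f)\|_{\widetilde{E}_0(0,u)}\,\big\|_{\widetilde{E}},
$$
is controlled by (and absorbed into) the sum of the other two, i.e. contributes nothing new to the intersection.

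To handle $T(f)$, I would proceed in two directions. For the upper bound $T(f)\lesssim\|f\|^{\mathcal R}_{\theta_1,B_1,E,\a,F}$ (which is the genuinely new estimate, and the place where the hypothesis $0<\theta_0<\theta_1<1$ and $\|\b\|_{\widetilde{E}(0,1)}<\infty$ enter), the strategy is to use the monotonicity of $t\rightsquigarrow t^{-1}K(t,f)$ together with Lemma \ref{key2hardy} (applied to the quasi-concave function $t\rightsquigarrow K(t,f)$) to bound $\|t^{-\theta_0}\b_0(t)K(t,f)\|_{\widetilde{E}_0(0,u)}$ by an integral $\int_0^u t^{-\theta_0}\b_0(t)K(t,f)\,\tfrac{dt}{t}$, then split and use the pointwise estimate \eqref{eK2} (for $f\in\overline{X}^{\mathcal R}_{\theta_1,\b_1,E_1,\a,F}$), namely $K(t,f)\lesssim t^{\theta_1}\a(t)^{-1}\|\b_1\|_{\widetilde{E}_1(0,t)}^{-1}\|f\|^{\mathcal R}_{\theta_1,\b_1,E_1,\a,F}$, to turn the integrand into $t^{\theta_1-\theta_0-1}\b_0(t)\a(t)^{-1}\|\b_1\|_{\widetilde{E}_1(0,t)}^{-1}=\rho(t)/t$; Lemma \ref{key2hardy} then yields $\int_0^u\rho(t)\tfrac{dt}{t}\sim\rho(u)$ because $\rho$ is $SV$ times a positive power. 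Feeding this back, $T(f)\lesssim\|\b(\rho(u))\|f\|^{\mathcal R}_{\ldots}\|_{\widetilde{E}}$ — but wait, the outer norm is taken over all of $(0,\infty)$, so one needs $\|\b(\rho(u))\|_{\widetilde{E}}<\infty$; this is where $\|\b\|_{\widetilde{E}(0,1)}<\infty$ is used on the part $\rho(u)\leq1$, while on $\rho(u)>1$ one does not use the crude estimate but instead keeps $\|f\|^{\mathcal R}_{\theta_1,B_1,E,\a,F}$ via Lemma \ref{lem1} (iii) giving $\b(\rho(u))\lesssim\|\b\circ\rho\|_{\widetilde E(u,\infty)}$ and matching against the $\mathcal R$-norm; I expect a splitting of the $u$-integral at $1$ (or more precisely at the point where $\rho(u)=1$) to be needed. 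For the lower bound, one invokes \eqref{e1}-\eqref{e2} in the couple $(Y_0,Y_1)$ — or more directly, the already-established lower estimate \eqref{ec365.1} in Theorem \ref{5.10.2} shows each of the three Holmstedt terms is $\lesssim K(\rho(u),f;Y_0,Y_1)$, hence applying $\|\b(\rho(u))\rho(u)^{-1}(\cdot)\|_{\widetilde E}$ gives that each of $\|f\|^{\mathcal R}_{\theta_1,B_1,E,\a,F}$, $\|f\|^{\mathcal R,\mathcal R}_{\theta_1,\b\circ\rho,E,\b_1,E_1,\a,F}$, and $T(f)$ is $\lesssim\|f\|_{(Y_0,Y_1)_{1,\b,E}}$, yielding the embedding $(Y_0,Y_1)_{1,\b,E}\hookrightarrow\overline{X}^{\mathcal R}_{\theta_1,B_1,E,\a,F}\cap\overline{X}^{\mathcal R,\mathcal R}_{\theta_1,\b\circ\rho,E,\b_1,E_1,\a,F}$.

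Combining: the "$\hookrightarrow$" direction is immediate from the lower Holmstedt estimate as just described; the "$\hookleftarrow$" direction follows from the upper Holmstedt estimate once $T(f)$ is absorbed into $\|f\|^{\mathcal R}_{\theta_1,B_1,E,\a,F}+\|f\|^{\mathcal R,\mathcal R}_{\theta_1,\b\circ\rho,E,\b_1,E_1,\a,F}$ by the Hardy/monotonicity argument above. \textbf{The main obstacle} I anticipate is precisely this absorption of the first Holmstedt term: one must show $T(f)$ adds nothing to the intersection, and doing so cleanly requires the interval splitting for the outer $\widetilde E$-norm together with careful bookkeeping of the $SV$-factors (Lemmas \ref{lem0}, \ref{lem1}) and the two pointwise $K$-functional bounds \eqref{eK} and \eqref{eK2}; the parameter constraint $\theta_0<\theta_1$ is essential here since it makes $\rho$ (and the relevant exponent $\theta_1-\theta_0$) strictly increasing, which is what powers the Hardy inequality $\int_0^u\rho(t)\,\tfrac{dt}{t}\sim\rho(u)$.
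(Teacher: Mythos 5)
Your high-level scheme for part c) coincides with the paper's: pass to the $u$-variable via Lemma~\ref{Le51} (with exponent $\theta_1-\theta_0\in(0,1)$), insert the Holmstedt equivalence of Theorem~\ref{5.10.2}~a), read off that the last two Holmstedt terms give precisely the quasi-norms of $\overline{X}^{\mathcal R}_{\theta_1,B_1,E,\a,F}$ and $\overline{X}^{\mathcal R,\mathcal R}_{\theta_1,\b\circ\rho,E,\b_1,E_1,\a,F}$, obtain one embedding from the lower Holmstedt bound, and reduce the converse to absorbing the remaining term
$T(f)=\big\|\rho(u)^{-1}\b(\rho(u))\|t^{-\theta_0}\b_0(t)K(t,f)\|_{\widetilde{E}_0(0,u)}\big\|_{\widetilde{E}}$.
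You correctly flag this absorption as the crux, but the mechanism you propose for it is not the paper's and as written does not close. After bounding the inner norm by $\int_0^u t^{-\theta_0}\b_0(t)K(t,f)\,\tfrac{dt}{t}$ via Lemma~\ref{key2hardy}, you substitute the crude pointwise estimate \eqref{eK2}, which replaces $K(t,f)$ by $\tfrac{t^{\theta_1}}{\a(t)\|\b_1\|_{\widetilde{E}_1(0,t)}}\|f\|^{\mathcal R}_{\theta_1,\b_1,E_1,\a,F}$. Two things go wrong. First, this produces a bound in terms of $\|f\|^{\mathcal R}_{\theta_1,\b_1,E_1,\a,F}$, which is not one of the two quasi-norms appearing in the intersection you must dominate $T(f)$ by; decoupling $K$ from the outer variable $u$ too early destroys exactly the $u$-dependence (the factor $\b(\rho(u))$ inside the outer $\widetilde{E}$-norm) that is needed to land on $\|f\|^{\mathcal R}_{\theta_1,B_1,E,\a,F}$. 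Second, the resulting factor to be measured in $\widetilde{E}$ is $\b\circ\rho$ over all of $(0,\infty)$, and $\|\b\circ\rho\|_{\widetilde E}$ is in general infinite under the hypothesis $\|\b\|_{\widetilde E(0,1)}<\infty$; you notice this and propose a split at the point where $\rho(u)=1$, but the $\rho(u)>1$ piece is left vague and it is not clear it can be repaired within your framework.

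The paper avoids both problems by never invoking \eqref{eK2} for this estimate. From the same $\int_0^u$-bound, it applies the weighted Hardy inequality \eqref{eHardy1} (the outer power is $\theta_0-\theta_1<0$, so the hypothesis of Lemma~\ref{teHardy1} is met), which keeps $K$ attached to the outer variable:
\begin{equation*}
T(f)\lesssim\big\|\,u^{-\theta_1}\a(u)\|\b_1\|_{\widetilde{E}_1(0,u)}\,\b(\rho(u))\,K(u,f)\,\big\|_{\widetilde{E}}.
\end{equation*}
Now the elementary monotonicity estimate \eqref{e3}, namely $u^{-\theta_1}\a(u)K(u,f)\lesssim\|t^{-\theta_1}\a(t)K(t,f)\|_{\widetilde{F}(u,\infty)}$, turns the right-hand side into exactly the $I_8$-term, i.e.\ $T(f)\lesssim\|f\|^{\mathcal R}_{\theta_1,B_1,E,\a,F}$. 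No splitting of the outer $\widetilde{E}$-norm is needed and the result is in the correct quasi-norm. Replacing your appeal to \eqref{eK2} by this Hardy-then-\eqref{e3} two-step is the fix; once that is done your argument lines up with the paper's.
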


\begin{proof}
Throughout the proof we use the notation $Y_0=\overline{X}_{\theta_0,\b_0,E_0}$, $Y_1=\overline{X}^{\mathcal R}_{\theta_1, \b_1,E_1,\a,F}$ and $\overline{K}(s,f)=K(s,f;Y_0,Y_1)$, $f\in Y_0+Y_1$, $s>0$.

 We  start with the proof of a). Let $f\in (Y_0,Y_1)_{\theta,\b,E}$. Lemma  \ref{Le51} and Theorem \ref{5.10.2}~a) and the lattice property of $\widetilde{E}$ yield
\begin{align*}
\|f\|_{\overline{Y}_{\theta,\b,E}}&\sim \|\rho(u)^{-\theta}\b(\rho(u))\overline{K}(\rho(u),f)\|_{\widetilde{E}}\\
&\gtrsim \big\|\rho(u)^{-\theta}\b(\rho(u)) \| t^{- \theta_0} \b_0(t) K(t,f)\|_{ \widetilde{E}_0(0,u)}\big\|_{\widetilde{E}}.
\end{align*}
Now using \eqref{e1} and observing that 
\begin{equation}\label{rbtheta}
\rho(u)^{-\theta}\b(\rho(u))=u^{\theta_0-\tilde{\theta}}\frac{\B_{\theta}(u)}{\b_0(u)},\quad u>0,
\end{equation}
one deduces that
$$\|f\|_{\overline{Y}_{\theta,\b,E}}\gtrsim \|\rho(u)^{-\theta}\b(\rho(u)) u^{- \theta_0} \b_0(u) K(u,f)\|_{\widetilde{E}}=\|u^{-\tilde{\theta}}\B_{\theta}(u) K(u,f)\|_{\widetilde{E}}.$$
Thus, the inclusion $\overline{Y}_{\theta,\b,E}\hookrightarrow \overline{X}_{\tilde{\theta},B_\theta,E}$ is proved.

Next we proceed with the reverse inclusion. Let $f\in\overline{X}_{\tilde{\theta},B_\theta,E}$.  Using again Lemma  \ref{Le51}, Theorem \ref{5.10.2} a)  and the triangular inequality,  we have the estimate
\begin{align}
\|f\|_{\overline{Y}_{\theta,\b,E}}&\lesssim \bigg\|\rho(u)^{-\theta}\b(\rho(u)) \| t^{- \theta_0} \b_0(t) K(t,f)\|_{ \widetilde{E}_0(0,u)}\bigg\|_{\widetilde{E}}\label{e46}\\
&+ \bigg\|\rho(u)^{1-\theta}\b(\rho(u)) \| \b_1\|_{\widetilde{E}_1(0,u)}\|t^{- \theta_1} \a(t) K(t,f)\|_{ \widetilde{F}(u,\infty)}\bigg\|_{\widetilde{E}}\nonumber\\
&+\bigg\|\rho(u)^{1-\theta}\b(\rho(u))\Big\| \b_1(t) \|s^{- \theta_1} \a(s) K(s,f)\|_{ \widetilde{F}(t,\infty)}\Big\|_{ \widetilde{E}_1(u,\infty)} \bigg\|_{\widetilde{E}}.\nonumber
\end{align}
We denote the last three expressions by $I_1$, $I_2$ and $I_3$, respectively, and we have to estimate each one by the norm of the function $f$ in $\overline{X}_{\tilde{\theta},B_\theta,E}$. Let us begin with $I_1$. Identity \eqref{rbtheta} implies that
$$I_1=\bigg\|u^{\theta_0-\tilde{\theta}}\frac{\B_{\theta}(u)}{\b_0(u)} \| t^{- \theta_0} \b_0(t) K(t,f)\|_{ \widetilde{E}_0(0,u)}\bigg\|_{\widetilde{E}},$$
where $\theta_0-\tilde{\theta}<0$, so applying \eqref{022} and \eqref{eHardy1} we obtain that $I_1\lesssim \|f\|_{\overline{X}_{\tilde{\theta},B_\theta,E}}$. Indeed,
\begin{align*}
I_1&\lesssim \bigg\| u^{\theta_0-\tilde{\theta}}\frac{\B_{\theta}(u)}{\b_0(u)}\int_0^ut^{- \theta_0} \b_0(t) K(t,f)\ \frac{dt}{t}\bigg\|_{\widetilde{E}}\\
&\lesssim \big\| u^{\theta_0-\tilde{\theta}}\frac{\B_{\theta}(u)}{\b_0(u)}u^{- \theta_0} \b_0(u) K(u,f)\big\|_{\widetilde{E}}\\
&=\|u^{-\tilde{\theta}}B_{\theta}(u)K(u,f)\|_{\widetilde{E}}=\|f\|_{\overline{X}_{\tilde{\theta},B_\theta,E}}.
\end{align*}
Similarly, using that  
\begin{equation}\label{rbtheta2}
 \rho(u)^{1-\theta}\b(\rho(u))=u^{\theta_1-\tilde{\theta}}\frac{\B_{\theta}(u)}{\a(u)\|\b_1\|_{\widetilde{E}_1(0,u)}},\quad u>0,
 \end{equation}
the $\widetilde{L}_1$-bound \eqref{023} and the Hardy type inequality \eqref{eHardy2} ($\theta_1-\tilde{\theta}>0$), we have that
\begin{align}
I_2&= \bigg\|\rho(u)^{1-\theta}\b(\rho(u)) \| \b_1\|_{\widetilde{E}_1(0,u)}\|t^{- \theta_1} \a(t) K(t,f)\|_{ \widetilde{F}(u,\infty)}\bigg\|_{\widetilde{E}}\label{I2}\\
&= \bigg\| u^{\theta_1-\tilde{\theta}}\frac{\B_{\theta}(u)}{\a(u)}\|t^{- \theta_1} \a(t) K(t,f)\|_{ \widetilde{F}(u,\infty)}\bigg\|_{\widetilde{E}}\nonumber\\
&\lesssim \bigg\| u^{\theta_1-\tilde{\theta}}\frac{\B_{\theta}(u)}{\a(u)}\int_u^{\infty}t^{- \theta_1} \a(t) K(t,f)\ \frac{dt}{t}\bigg\|_{\widetilde{E}}\nonumber\\
&\lesssim \big\| u^{\theta_1-\tilde{\theta}}\frac{\B_{\theta}(u)}{\a(u)}u^{- \theta_1} \a(u) K(u,f)\big\|_{\widetilde{E}}\nonumber\\
&=\|u^{-\tilde{\theta}}B_{\theta}(u)K(u,f)\|_{\widetilde{E}}=\|f\|_{\overline{X}_{\tilde{\theta},B_\theta,E}}.\nonumber
\end{align}
Finally, we observe that $I_3$ is bounded by $I_2$. Indeed, using \eqref{rbtheta2} we can identify $I_3$ in the following way
$$
I_3=\bigg\|u^{\theta_1-\tilde{\theta}}\frac{\B_{\theta}(u)}{\a(u)\|\b_1\|_{\widetilde{E}_1(0,u)}}\Big\| \b_1(t) \|s^{- \theta_1} \a(s) K(s,f)\|_{ \widetilde{F}(t,\infty)}\Big\|_{ \widetilde{E}_1(u,\infty)} \bigg\|_{\widetilde{E}}.$$
Since the function $t\rightsquigarrow \|\cdot\|_{ \widetilde{F}(t,\infty)}$, $t\in(u,\infty)$, is a non-increasing function and $\theta_1-\tilde{\theta}>0$,  Lemma \ref{thmFMS-4} gives that
$$
I_3\sim \bigg\|u^{\theta_1-\tilde{\theta}}\frac{\B_{\theta}(u)}{\a(u)\|\b_1\|_{\widetilde{E}_1(0,u)}}\,\b_1(u)\, \|s^{- \theta_1} \a(s) K(s,f)\|_{ \widetilde{F}(u,\infty)} \bigg\|_{\widetilde{E}}\lesssim I_2.$$
Summing up $\|f\|_{\overline{Y}_{\theta,\b,E}}\leq I_1+I_2+I_3\lesssim I_1+I_2\lesssim \|f\|_{\overline{X}_{\tilde{\theta},B_\theta,E}}$. 

The proof of b) follows the same steps. In fact, let $f\in(Y_0,Y_1)_{0,\b,E}$. Lemma  \ref{Le51}, Theorem \ref{5.10.2} a)  and the lattice property of $\widetilde{E}$ yield
$$\|f\|_{\overline{Y}_{0,\b,E}}\sim \|\b(\rho(u))\overline{K}(\rho(u),f)\|_{\widetilde{E}}\gtrsim\Big\|\b(\rho(u))\| t^{- \theta_0} \b_0(t) K(t,f)\|_{ \widetilde{E}_0(0,u)}\Big\|_{\widetilde{E}}.$$
Hence $f\in \overline{X}^{\mathcal L}_{\theta_0,\b\circ \rho,E,\b_0,E_0}$. 

Next we prove the reverse embedding. Let $f\in \overline{X}^{\mathcal L}_{\theta_0,\b\circ \rho,E,\b_0,E_0}$. Arguing as in \eqref{e46} we have
\begin{align*}
\|f\|_{\overline{Y}_{\theta,\b,E}}&\lesssim \bigg\|\b(\rho(u)) \| t^{- \theta_0} \b_0(t) K(t,f)\|_{ \widetilde{E}_0(0,u)}\bigg\|_{\widetilde{E}}\\
&+ \bigg\|\rho(u)\b(\rho(u)) \| \b_1\|_{\widetilde{E}_1(0,u)}\|t^{- \theta_1} \a(t) K(t,f)\|_{ \widetilde{F}(u,\infty)}\bigg\|_{\widetilde{E}}\\
&+\bigg\|\rho(u)\b(\rho(u))\Big\| \b_1(t) \|s^{- \theta_1} \a(s) K(s,f)\|_{ \widetilde{F}(t,\infty)}\Big\|_{ \widetilde{E}_1(u,\infty)} \bigg\|_{\widetilde{E}}:=I_4+I_5+I_6.\end{align*}
Clearly $I_4=\|f\|_{ \overline{X}^{\mathcal L}_{\theta_0,\b\circ \rho,E,\b_0,E_0}}$. In order to estimate $I_5$ by $I_4$ one can argue as in \eqref{I2} with $\theta_1-\theta_0>0$ and use  \eqref{e1} to obtain
\begin{align*}
I_5&= \bigg\|\rho(u)\b(\rho(u)) \| \b_1\|_{\widetilde{E}_1(0,u)}\|t^{- \theta_1} \a(t) K(t,f)\|_{ \widetilde{F}(u,\infty)}\bigg\|_{\widetilde{E}}\\
&= \bigg\|u^{\theta_1-\theta_0}\frac{\b_0(u)}{\a(u)} \b(\rho(u))    \|t^{- \theta_1} \a(t) K(t,f)\|_{ \widetilde{F}(u,\infty)}\bigg\|_{\widetilde{E}}\\
&\lesssim \bigg\| u^{\theta_1-\theta_0}\frac{\b_0(u)}{\a(u)}\b(\rho(u))\int_u^{\infty}t^{- \theta_1} \a(t) K(t,f)\ \frac{dt}{t}\bigg\|_{\widetilde{E}}\\
&\lesssim \big\| u^{\theta_1-\theta_0}\frac{\b_0(u)}{\a(u)}\b(\rho(u))u^{- \theta_1} \a(u) K(u,f)\big\|_{\widetilde{E}}\\
&=\|u^{-\theta_0}\b_0(u)\b(\rho(u))K(u,f)\|_{\widetilde{E}}\\
&\lesssim \bigg\|\b(\rho(u)) \| t^{- \theta_0} \b_0(t) K(t,f)\|_{ \widetilde{E}_0(0,u)}\bigg\|_{\widetilde{E}}=I_4.
\end{align*}
Now we estimate $I_6$ by $I_5$ and then by $I_4$. Using  the definition of $\rho(u)$  and Lemmas  \ref{thmFMS-4} and  \ref{lem1} (iii) we have that
\begin{align*}
I_6&=\bigg\|\rho(u)\b(\rho(u))\Big\| \b_1(t) \|s^{- \theta_1} \a(s) K(s,f)\|_{ \widetilde{F}(t,\infty)}\Big\|_{ \widetilde{E}_1(u,\infty)} \bigg\|_{\widetilde{E}}\\
&=\bigg\|u^{\theta_1-\theta_0}\frac{\b_0(u)}{\a(u)\|\b_1\|_{\widetilde{E}_1(0,u)}} \b(\rho(u)) \Big\| \b_1(t) \|s^{- \theta_1} \a(s) K(s,f)\|_{ \widetilde{F}(t,\infty)}\Big\|_{ \widetilde{E}_1(u,\infty)} \bigg\|_{\widetilde{E}}\\
&\sim\bigg\|u^{\theta_1-\theta_0}\frac{\b_0(u)\b_1(u)}{\a(u)\|\b_1\|_{\widetilde{E}_1(0,u)}} \b(\rho(u))  \|t^{- \theta_1} \a(t) K(t,f)\|_{ \widetilde{F}(u,\infty)} \bigg\|_{\widetilde{E}}\\
&\lesssim\bigg\|u^{\theta_1-\theta_0}\frac{\b_0(u)}{\a(u)} \b(\rho(u))  \|t^{- \theta_1} \a(t) K(t,f)\|_{ \widetilde{F}(u,\infty)} \bigg\|_{\widetilde{E}}=I_5\lesssim I_4.
\end{align*}
Then, $\|f\|_{\overline{Y}_{\theta,\b,E}}\lesssim I_4=\|f\|_{ \overline{X}^{\mathcal L}_{\theta_0,\b\circ \rho,E,\b_0,E_0}}$ and the proof of b) is complete.

Finally, we proceed with the proof of c). Choose $f\in(Y_0,Y_1)_{1,\b,E}$. Lemma \ref{Le51},  Theorem \ref{5.10.2} a) and the lattice property  guarantee that
\begin{align*}
\|f\|_{\overline{Y}_{1,\b,E}}&\sim \|\rho(u)^{-1}\b(\rho(u))\overline{K}(\rho(u),f)\|_{\widetilde{E}}\\
&\gtrsim\Big\|\b(\rho(u))\, \| \b_1\|_{\widetilde{E}_1(0,u)} \|t^{- \theta_1} \a(t) K(t,f)\|_{ \widetilde{F}(u,\infty)}\Big\|_{\widetilde{E}}
\end{align*}
and 
$$
\|f\|_{\overline{Y}_{1,\b,E}}\gtrsim\bigg\|\b(\rho(u))\,\Big\| \b_1(t) \|s^{- \theta_1} \a(s) K(s,f)\|_{ \widetilde{F}(t,\infty)}\Big\|_{ \widetilde{E}_1(u,\infty)}\bigg\|_{\widetilde{E}}
$$
and therefore $f\in\overline{X}^{\mathcal R}_{\theta_1,B_1,E,a,F}\cap\overline{X}^{\mathcal R,\mathcal R}_{\theta_1,\b\circ\rho,E,\b_1,E_1,\a,F}$. Let us prove the reverse embedding.  Again, Lemma \ref{Le51}, Theorem \ref{5.10.2} a) and the triangular inequality give that
\begin{align*}
\|f\|_{\overline{Y}_{1,\b,E}}&\lesssim \bigg\|\rho(u)^{-1}\b(\rho(u)) \| t^{- \theta_0} \b_0(t) K(t,f)\|_{ \widetilde{E}_0(0,u)}\bigg\|_{\widetilde{E}}\\
&+ \bigg\|\b(\rho(u)) \| \b_1\|_{\widetilde{E}_1(0,u)}\|t^{- \theta_1} \a(t) K(t,f)\|_{ \widetilde{F}(u,\infty)}\bigg\|_{\widetilde{E}}\\
&+\bigg\|\b(\rho(u))\Big\| \b_1(t) \|s^{- \theta_1} \a(s) K(s,f)\|_{ \widetilde{F}(t,\infty)}\Big\|_{ \widetilde{E}_1(u,\infty)} \bigg\|_{\widetilde{E}}:=I_7+I_8+I_9.\end{align*}
Since $I_8=\|f\|_{\overline{X}^{\mathcal R}_{\theta_1,B_1,E,a,F}}$ and $I_9=\|f\|_{\overline{X}^{\mathcal R,\mathcal R}_{\theta_1,\b\circ\rho,E,\b_1,E_1,\a,F}}$, it is enough to estimate $I_7$. We proceed as before, applying the definition of $\rho(u)$, and using the equations \eqref{022}, \eqref{eHardy1} and \eqref{e4} it follows that
\begin{align*}
I_7&=\bigg\|\rho(u)^{-1}\b(\rho(u)) \| t^{- \theta_0} \b_0(t) K(t,f)\|_{ \widetilde{E}_0(0,u)}\bigg\|_{\widetilde{E}}\\
&=\bigg\|u^{\theta_0-\theta_1}\frac{\a(u)\|\b_1\|_{\widetilde{E}_1(0,u)}}{\b_0(u)}\,\b(\rho(u)) \| t^{- \theta_0} \b_0(t) K(t,f)\|_{ \widetilde{E}_0(0,u)}\bigg\|_{\widetilde{E}}\\
&\lesssim \bigg\|u^{\theta_0-\theta_1}\frac{\a(u)\|\b_1\|_{\widetilde{E}_1(0,u)}}{\b_0(u)}\,\b(\rho(u)) \int_0^u t^{- \theta_0} \b_0(t) K(t,f)\, \frac{dt}{t}\bigg\|_{\widetilde{E}}\\
&\lesssim \bigg\|u^{-\theta_1}\a(u)\|\b_1\|_{\widetilde{E}_1(0,u)}\b(\rho(u))  K(u,f)\, \bigg\|_{\widetilde{E}}\\
&\lesssim \bigg\|\b(\rho(u)) \| \b_1\|_{\widetilde{E}_1(0,u)}\|t^{- \theta_1} \a(t) K(t,f)\|_{ \widetilde{F}(u,\infty)}\bigg\|_{\widetilde{E}}=I_8.
\end{align*}
Hence
$$\|f\|_{\overline{Y}_{1,\b,E}}\lesssim \max\{\|f\|_{\overline{X}^{\mathcal R}_{\theta_1,B_1,E,a,F}},\, \|f\|_{\overline{X}^{\mathcal R,\mathcal R}_{\theta_1,\b\circ\rho,E,\b_1,E_1,\a,F}}\}$$
and the proof of c) is finished.
\end{proof}

Next we deal with the extreme case $\theta_0=0$.

\begin{thm}\label{thm5.10}
Let $0<\theta_1<1$ and let $E$, $E_0$, $E_1$ and $F$ be r.i. spaces.
Let $\a$, $\b$,  $\b_0$, $\b_1\in SV$ with $\b_0$ and $\b_1$ satisfying  $\|\b_0\|_{\widetilde{E}_0(1,\infty)}<\infty$, $\|\b_1\|_{\widetilde{E}_1(0,1)}<\infty$, respectively, and consider the function
\begin{equation}\label{rho3}
\rho(u) = u^{\theta_1}\frac{\|\b_0\|_{\widetilde{E}_0(u,\infty)}}{\a(u)\|\b_1\|_{\widetilde{E}_1(0,u)} },\quad u>0.
\end{equation}
\vspace{1mm}

\noindent a) If $0<\theta<1$, then
$$( \overline{X}_{0,\b_0,E_0},\overline{X}^{\mathcal R}_{\theta_1, \b_1,E_1,\a,F})_{\theta,\b,E}= \overline{X}_{\tilde{\theta},B_\theta,E}$$
\vspace{1mm}
where
$$\tilde{\theta}=\theta\theta_1\quad \mbox{and} \quad B_\theta(u)=\big(\|\b_0\|_{\widetilde{E}_0(u,\infty)}\big)^{1-\theta}\big(\a(u)\|\b_1\|_{\widetilde{E}_1(0,u)}\big)^{\theta}\b(\rho(u)),\ u>0.$$

\vspace{1mm}
\noindent b) If $\theta=0$ and $\|\b\|_{\widetilde{E}(1,\infty)}<\infty$, then
\begin{equation}\label{einter2}
( \overline{X}_{0,\b_0,E_0},\overline{X}^{\mathcal R}_{\theta_1, \b_1,E_1,\a,F})_{0,\b,E}=  \overline{X}_{0,B_0,E}\cap\overline{X}^{\mathcal L}_{0,\b\circ \rho,E,\b_0,E_0}
\end{equation}
where $B_0(u)=\|\b_0\|_{\widetilde{E}_0(u,\infty)}\b(\rho(u))$, $u>0$.

\vspace{1mm}
\noindent c) If $\theta=1$ and $\|\b\|_{\widetilde{E}(0,1)}<\infty$, then
$$
(\overline{X}_{0,\b_0,E_0},\overline{X}^{\mathcal R}_{\theta_1, \b_1,E_1,\a,F})_{1,\b,E}= \overline{X}^{\mathcal R}_{\theta_1,B_1,E,a,F}\cap\overline{X}^{\mathcal R,\mathcal R}_{\theta_1,\b\circ\rho,E,\b_1,E_1,a,F}
$$
where $B_1(u)=\|\b_1\|_{\widetilde{E}_1(0,u)}\b(\rho(u))$, $u>0$.
\end{thm}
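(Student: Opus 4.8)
The plan is to follow, step by step, the proof of Theorem \ref{thm5.9}, replacing the Holmstedt formula of Theorem \ref{5.10.2}~a) by that of Theorem \ref{5.10.2}~b). Write $Y_0=\overline{X}_{0,\b_0,E_0}$, $Y_1=\overline{X}^{\mathcal R}_{\theta_1,\b_1,E_1,\a,F}$, $\overline{Y}=(Y_0,Y_1)$ and $\overline{K}(s,f)=K(s,f;Y_0,Y_1)$. Observe that $u\rightsquigarrow\|\b_0\|_{\widetilde{E}_0(u,\infty)}$ lies in $SV$ (Lemma \ref{lem1}~(ii)) and, by $\|\b_0\|_{\widetilde{E}_0(1,\infty)}<\infty$ and Remark \ref{rem23}, is finite for every $u>0$; hence $\rho$ in \eqref{rho3} has the admissible form $\rho(u)=u^{\theta_1}\tilde{\a}(u)$ with $\tilde{\a}\in SV$ and $0<\theta_1<1$, so that Lemma \ref{Le51} applies. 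The identity \eqref{rbtheta} now becomes
\begin{equation}\label{rbtheta3}
\rho(u)^{-\theta}\b(\rho(u))=u^{-\tilde{\theta}}\,\frac{B_\theta(u)}{\|\b_0\|_{\widetilde{E}_0(u,\infty)}},\qquad u>0,
\end{equation}
(immediate from \eqref{rho3} and the definition of $B_\theta$), while \eqref{rbtheta2} continues to hold verbatim for the present $\rho$ and $B_\theta$; and at several points we will use $\b_0(u)\lesssim\|\b_0\|_{\widetilde{E}_0(u,\infty)}$, which is Lemma \ref{lem1}~(iii).

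For part a), write $\overline{K}(\rho(u),f)\sim T_1(u)+\rho(u)T_2(u)+\rho(u)T_3(u)$, with $T_1,T_2,T_3$ the three summands of Theorem \ref{5.10.2}~b). By Lemma \ref{Le51}, Theorem \ref{5.10.2}~b) and the quasi-triangle inequality, $\|f\|_{\overline{Y}_{\theta,\b,E}}\lesssim I_1+I_2+I_3$ with $I_1=\|\rho^{-\theta}\b(\rho)\,T_1\|_{\widetilde{E}}$ and $I_j=\|\rho^{1-\theta}\b(\rho)\,T_j\|_{\widetilde{E}}$, $j=2,3$. By \eqref{rbtheta3}, Lemma \ref{key2hardy} and the Hardy inequality \eqref{eHardy1} ($\tilde{\theta}>0$),
\begin{equation*}
I_1\lesssim\Big\|u^{-\tilde{\theta}}\,\frac{B_\theta(u)\,\b_0(u)}{\|\b_0\|_{\widetilde{E}_0(u,\infty)}}\,K(u,f)\Big\|_{\widetilde{E}}\lesssim\|f\|_{\overline{X}_{\tilde{\theta},B_\theta,E}},
\end{equation*}
the last step by Lemma \ref{lem1}~(iii); $I_2$ is treated by \eqref{rbtheta2}, Lemma \ref{key2hardy} and \eqref{eHardy2} ($\theta_1-\tilde{\theta}>0$) exactly as in \eqref{I2}; and $I_3\lesssim I_2$ by Lemma \ref{thmFMS-4} (applied to the non-increasing map $t\rightsquigarrow\|s^{-\theta_1}\a(s)K(s,f)\|_{\widetilde{F}(t,\infty)}$) together with Lemma \ref{lem1}~(iii), just as in Theorem \ref{thm5.9}~a). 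For the reverse estimate there is one change: since $\rho$ now carries $\|\b_0\|_{\widetilde{E}_0(u,\infty)}$ rather than $\b_0(u)$, one cannot start from $T_1$ and \eqref{e1} (that would bring in the factor $\b_0(u)/\|\b_0\|_{\widetilde{E}_0(u,\infty)}$, pointing the wrong way); instead one retains $T_2$ and uses \eqref{e3}, so that by Lemma \ref{Le51}, Theorem \ref{5.10.2}~b), the lattice property and \eqref{rbtheta2},
\begin{equation*}
\|f\|_{\overline{Y}_{\theta,\b,E}}\gtrsim\big\|\rho(u)^{1-\theta}\b(\rho(u))\,\|\b_1\|_{\widetilde{E}_1(0,u)}\,u^{-\theta_1}\a(u)\,K(u,f)\big\|_{\widetilde{E}}=\|f\|_{\overline{X}_{\tilde{\theta},B_\theta,E}}.
\end{equation*}
This proves a).

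For part b) ($\theta=0$), the lattice property and Theorem \ref{5.10.2}~b) give $\|f\|_{\overline{Y}_{0,\b,E}}\gtrsim\|f\|_{\overline{X}^{\mathcal L}_{0,\b\circ\rho,E,\b_0,E_0}}$ from $T_1$, and, using $T_2$, \eqref{e3} and $\rho(u)\|\b_1\|_{\widetilde{E}_1(0,u)}u^{-\theta_1}\a(u)=\|\b_0\|_{\widetilde{E}_0(u,\infty)}$, also $\|f\|_{\overline{Y}_{0,\b,E}}\gtrsim\|B_0(u)K(u,f)\|_{\widetilde{E}}=\|f\|_{\overline{X}_{0,B_0,E}}$. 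For the reverse inclusion one splits $\|f\|_{\overline{Y}_{0,\b,E}}\lesssim I_4+I_5+I_6$ as in Theorem \ref{thm5.9}~b), with $I_4=\|\b(\rho)T_1\|_{\widetilde{E}}=\|f\|_{\overline{X}^{\mathcal L}_{0,\b\circ\rho,E,\b_0,E_0}}$. The new feature is that $I_5=\|\rho\,\b(\rho)T_2\|_{\widetilde{E}}$ can no longer be absorbed into $I_4$; using $\rho(u)\|\b_1\|_{\widetilde{E}_1(0,u)}=u^{\theta_1}\|\b_0\|_{\widetilde{E}_0(u,\infty)}/\a(u)$, Lemma \ref{key2hardy} and \eqref{eHardy2}, one obtains only $I_5\lesssim\|\,\|\b_0\|_{\widetilde{E}_0(u,\infty)}\b(\rho(u))K(u,f)\|_{\widetilde{E}}=\|f\|_{\overline{X}_{0,B_0,E}}$, which is precisely why $\overline{X}_{0,B_0,E}$ appears in \eqref{einter2}; and $I_6\lesssim I_5$ by Lemma \ref{thmFMS-4} and Lemma \ref{lem1}~(iii). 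Hence $\|f\|_{\overline{Y}_{0,\b,E}}\lesssim\|f\|_{\overline{X}_{0,B_0,E}}+\|f\|_{\overline{X}^{\mathcal L}_{0,\b\circ\rho,E,\b_0,E_0}}$, proving b). Part c) ($\theta=1$), once $\b_0(u)$ is replaced by $\|\b_0\|_{\widetilde{E}_0(u,\infty)}$, copies Theorem \ref{thm5.9}~c): the lower bound comes from $T_2$ and $T_3$ (after Lemma \ref{Le51} and cancelling $\rho^{-1}\cdot\rho$), identifying $\overline{X}^{\mathcal R}_{\theta_1,B_1,E,\a,F}$ and $\overline{X}^{\mathcal R,\mathcal R}_{\theta_1,\b\circ\rho,E,\b_1,E_1,\a,F}$; in $\|f\|_{\overline{Y}_{1,\b,E}}\lesssim I_7+I_8+I_9$ the terms $I_8,I_9$ are exactly these two (quasi-)norms, and $I_7=\|\rho^{-1}\b(\rho)T_1\|_{\widetilde{E}}\lesssim I_8$ follows from Lemma \ref{key2hardy}, \eqref{eHardy1}, Lemma \ref{lem1}~(iii) and \eqref{e3}, just as above.

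All Hardy-type and rearrangement estimates involved are the ones already used for Theorem \ref{thm5.9}; the single recurring device is the replacement, via Lemma \ref{lem1}~(iii), of the pointwise value $\b_0(u)$ produced by each Hardy step by the tail norm $\|\b_0\|_{\widetilde{E}_0(u,\infty)}$ that appears in $\rho$ and in $B_\theta$, always in the favourable direction. The main ``obstacle'' is then purely a matter of bookkeeping in part b): one must notice that this constant loss is exactly what stops the middle Holmstedt term from being dominated by the $\mathcal{L}$-term, which is why the extra factor $\overline{X}_{0,B_0,E}$ genuinely enters the intersection.
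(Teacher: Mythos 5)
Your proof is correct and follows essentially the same route as the paper: replace the Holmstedt formula of Theorem~\ref{5.10.2}~a) by that of part~b), reuse Lemma~\ref{Le51}, the Hardy estimates of Lemma~\ref{teHardy1}, Lemma~\ref{thmFMS-4}, and the inequality $\b_0(u)\lesssim\|\b_0\|_{\widetilde{E}_0(u,\infty)}$ of Lemma~\ref{lem1}~(iii). The one place where you do more than the paper says is a genuine clarification rather than a detour: the paper asserts that part~a) ``follows the same arguments'' as Theorem~\ref{thm5.9}~a), but there the lower bound is read off from the first Holmstedt term $T_1$ via \eqref{e1}, and here that would produce the factor $\b_0(u)/\|\b_0\|_{\widetilde{E}_0(u,\infty)}\lesssim1$ pointing the wrong way; you correctly extract the lower bound from the middle term $T_2$ via \eqref{e3} and the identity \eqref{rbtheta2}, exactly as the paper does in the proof of Theorem~\ref{thm511}. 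Your treatment of b) matches the paper's detailed proof term by term (your $I_4,I_5,I_6$ are its $I_{10},I_{11},I_{12}$), and your sketch of c) is the intended one.
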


\begin{proof}
The proofs of a) and c) follow the same arguments of the proofs of Theorem \ref{thm5.9} a) and c). The only differences are  the use of Theorem \ref{5.10.2} b) instead of  a) and  the use of the inequality $\b_0(u)\lesssim\|\b_0\|_{\widetilde{E}_0(u,\infty)}$, $u>0$.

Next we prove b). As in Theorem \ref{thm5.9} we use the notation $Y_0=\overline{X}_{0,\b_0,E_0}$, $Y_1=\overline{X}^{\mathcal R}_{\theta_1, \b_1,E_1,\a,F}$ and $\overline{K}(s,f)=K(s,f;Y_0,Y_1)$, $f\in Y_0+Y_1$, $s>0$.
Again, Lemma \ref{Le51} establishes that
$$\|f\|_{\overline{Y}_{0,\b,E}}\sim \|\b(\rho(u))\overline{K}(\rho(u),f)\|_{\widetilde{E}}.$$
Then, to finish the proof it  suffices  to show that $$
 \|\b(\rho(u))\overline{K}(\rho(u),f)\|_{\widetilde{E}}\sim\max \big \{ \|f\|_{\overline{X}_{0,B_0,E}},\|f\|_{\overline{X}^{\mathcal L}_{0,\b\circ \rho,E,\b_0,E_0}}  \big \}.
$$
Theorem \ref{5.10.2} b) and \eqref{e4} guarantee that 
$$\overline{K}(\rho(u),f)\gtrsim \| \b_0(t) K(t,f)\|_{ \widetilde{E}_0(0,u)}$$
and that
\begin{align*}
\overline{K}(\rho(u),f)&\gtrsim \rho(u) \| \b_1\|_{\widetilde{E}_1(0,u)} \|t^{- \theta_1} \a(t) K(t,f)\|_{ \widetilde{F}(u,\infty)}\\
&=u^{\theta_1}\frac{\|\b_0\|_{\widetilde{E}_0(u,\infty)}}{\a(u)\|\b_1\|_{\widetilde{E}_1(0,u)}}\, \| \b_1\|_{\widetilde{E}_1(0,u)} \|t^{- \theta_1} \a(t) K(t,f)\|_{ \widetilde{F}(u,\infty)}\\
&\gtrsim u^{\theta_1}\frac{\|\b_0\|_{\widetilde{E}_0(u,\infty)}}{\a(u)}\, u^{- \theta_1} \a(u) K(u,f)=\|\b_0\|_{\widetilde{E}_0(u,\infty)}K(u,f).
\end{align*}
Hence 
$$\|\b(\rho(u))\overline{K}(\rho(u),f)\|_{\widetilde{E}}\gtrsim
\max  \big \{ \|f\|_{\overline{X}_{0,B_0,E}},\|f\|_{\overline{X}^{\mathcal L}_{0,\b\circ \rho,E,\b_0,E_0}}  \big \}.$$
Now, we proceed to prove the reverse inequality. Use Theorem \ref{thm5.9} b) and the triangular inequality to obtain that
\begin{align*}
\|\b(\rho(u))\, \overline{K}(\rho(u),f)\|_{\widetilde{E}}&\lesssim
 \bigg\|\b(\rho(u)) \| t^{- \theta_0} \b_0(t) K(t,f)\|_{ \widetilde{E}_0(0,u)}\bigg\|_{\widetilde{E}}\\
&+ \bigg\|\rho(u)\b(\rho(u)) \| \b_1\|_{\widetilde{E}_1(0,u)}\|t^{- \theta_1} \a(t) K(t,f)\|_{ \widetilde{F}(u,\infty)}\bigg\|_{\widetilde{E}}\\
&+\bigg\|\rho(u)\b(\rho(u))\Big\| \b_1(t) \|s^{- \theta_1} \a(s) K(s,f)\|_{ \widetilde{F}(t,\infty)}\Big\|_{ \widetilde{E}_1(u,\infty)} \bigg\|_{\widetilde{E}}\\
&:=I_{10}+I_{11}+I_{12}.\end{align*}
The term $I_{10}$ is precisely $\|f\|_{\overline{X}^{\mathcal L}_{0,\b\circ \rho,E,\b_0,E_0}}$. The other  two terms can be estimated by  $\|f\|_{\overline{X}_{0,B_0,E}}$ proceeding as we did in \eqref{I2} and with $I_3$. Indeed,
\begin{align*}
I_{11}&= \bigg\|\rho(u)\b(\rho(u)) \| \b_1\|_{\widetilde{E}_1(0,u)}\|t^{- \theta_1} \a(t) K(t,f)\|_{ \widetilde{F}(u,\infty)}\bigg\|_{\widetilde{E}}\\
&= \bigg\| u^{\theta_1}\frac{B_0(u)}{\a(u)} \|t^{- \theta_1} \a(t) K(t,f)\|_{ \widetilde{F}(u,\infty)}\bigg\|_{\widetilde{E}}\\
&\lesssim \bigg\| u^{\theta_1-\tilde{\theta}}\frac{\B_0(u)}{\a(u)}\int_u^{\infty}t^{- \theta_1} \a(t) K(t,f)\ \frac{dt}{t}\bigg\|_{\widetilde{E}}\\
&\lesssim \big\| u^{\theta_1}\frac{\B_0(u)}{\a(u)}u^{- \theta_1} \a(u) K(u,f)\big\|_{\widetilde{E}}\\
&=\|B_0(u)K(u,f)\|_{\widetilde{E}}=\|f\|_{\overline{X}_{0,B_0,E}}
\end{align*}
and 
\begin{align*}
I_{12}&=\bigg\|\rho(u)\b(\rho(u))\Big\| \b_1(t) \|s^{- \theta_1} \a(s) K(s,f)\|_{ \widetilde{F}(t,\infty)}\Big\|_{ \widetilde{E}_1(u,\infty)} \bigg\|_{\widetilde{E}}\\
&=\bigg\|u^{\theta_1}\frac{\B_0(u)}{\a(u)\|\b_1\|_{\widetilde{E}_1(0,u)}}\Big\| \b_1(t) \|s^{- \theta_1} \a(s) K(s,f)\|_{ \widetilde{F}(t,\infty)}\Big\|_{ \widetilde{E}_1(u,\infty)} \bigg\|_{\widetilde{E}}\\
&\lesssim
\bigg\|u^{\theta_1}\frac{\B_0(u)}{\a(u)\|\b_1\|_{\widetilde{E}_1(0,u)}}\, \b_1(u) \|t^{- \theta_1} \a(t) K(t,f)\|_{ \widetilde{F}(u,\infty)}\bigg\|_{\widetilde{E}}\lesssim I_{11}.
\end{align*}
The proof of b) is complete.
\end{proof}

Our last result of this subsection characterizes the reiteration space when the first space in the couple is $X_0$.

\begin{thm}\label{thm511}
Let $0<\theta_1<1$,  and let $E$,  $E_1$, $F$ be r.i. spaces. Let
 $\a$, $\b$,  $\b_1\in SV$ with $\b_1$  satisfying  $\|\b_1\|_{\widetilde{E}_1(0,1)}<\infty$ and consider the function
$$\rho(u) = u^{\theta_1}\frac{1}{\a(u)\|\b_1\|_{\widetilde{E}_1(0,u)}} ,\quad u>0.$$
Then, the following statements hold:

\vspace{1mm}
\noindent a) If $0<\theta<1$, or $\theta=0$ and  $\|\b\|_{\widetilde{E}(1,\infty)}<\infty$, then
$$(X_0,\overline{X}^{\mathcal R}_{\theta_1, \b_1,E_1,\a,F})_{\theta,\b,E}= \overline{X}_{\tilde{\theta},B_\theta,E},$$
\vspace{1mm}
where
$$\tilde{\theta}=\theta\theta_1 \mand B_\theta(u)=\big(\a(u)\|\b_1\|_{\widetilde{E}_1(0,u)}\big)^{\theta}\b(\rho(u)),\ u>0.$$

\vspace{1mm}
\noindent b) If $\theta=1$ and $\|\b\|_{\widetilde{E}(0,1)}<\infty$, then
$$
(X_0,\overline{X}^{\mathcal R}_{\theta_1, \b_1,E_1,\a,F})_{1,\b,E}=\overline{X}^{\mathcal R}_{\theta_1,B_1,E,a,F}\cap\overline{X}^{\mathcal R,\mathcal R}_{\theta_1,\b\circ\rho,E,\b_1,E_1,a,F},
$$
where $B_1(u)=\|\b_1\|_{\widetilde{E}_1(0,u)}\b(\rho(u))$, $u>0$.
 \end{thm}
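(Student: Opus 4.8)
The plan is to follow the scheme of Theorems~\ref{thm5.9} and~\ref{thm5.10}, exploiting the simplification that the left endpoint here is $X_0$, so the Holmstedt formula of Theorem~\ref{5.10.2}~c) has only two terms and no ``$P_0$''-type term. First I would write $Y_0=X_0$, $Y_1=\overline{X}^{\mathcal R}_{\theta_1,\b_1,E_1,\a,F}$ and $\overline{K}(s,f)=K(s,f;Y_0,Y_1)$. Since $\rho(u)=u^{\theta_1}w(u)$ with $w(u)=\big(\a(u)\|\b_1\|_{\widetilde{E}_1(0,u)}\big)^{-1}\in SV$ (Lemmas~\ref{lem0},~\ref{lem1} and Remark~\ref{rem23}, using $\|\b_1\|_{\widetilde{E}_1(0,1)}<\infty$) and $0<\theta_1<1$, Lemma~\ref{Le51} gives $\|f\|_{\overline{Y}_{\theta,\b,E}}\sim\big\|\rho(u)^{-\theta}\b(\rho(u))\,\overline{K}(\rho(u),f)\big\|_{\widetilde{E}}$. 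Substituting the equivalence of Theorem~\ref{5.10.2}~c) then reduces everything to controlling, from above, the two quantities
\[
J_1=\big\|\rho(u)^{1-\theta}\b(\rho(u))\,\|\b_1\|_{\widetilde{E}_1(0,u)}\,\|t^{-\theta_1}\a(t)K(t,f)\|_{\widetilde{F}(u,\infty)}\big\|_{\widetilde{E}},
\]
\[
J_2=\big\|\rho(u)^{1-\theta}\b(\rho(u))\,\big\|\b_1(t)\|s^{-\theta_1}\a(s)K(s,f)\|_{\widetilde{F}(t,\infty)}\big\|_{\widetilde{E}_1(u,\infty)}\big\|_{\widetilde{E}},
\]
and, from below, the quantity $\big\|\rho(u)^{-\theta}\b(\rho(u))\,\overline{K}(\rho(u),f)\big\|_{\widetilde{E}}$ itself.

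For part~a) I would first record the algebraic identities $\rho(u)^{-\theta}\b(\rho(u))=u^{-\tilde{\theta}}B_\theta(u)$ and $\rho(u)^{1-\theta}\b(\rho(u))=u^{\theta_1-\tilde{\theta}}B_\theta(u)/\a(u)$, where $\tilde\theta=\theta\theta_1$ and $\theta_1-\tilde\theta=\theta_1(1-\theta)>0$. The embedding $\overline{Y}_{\theta,\b,E}\hookrightarrow\overline{X}_{\tilde\theta,B_\theta,E}$ follows by discarding the $J_2$-term of the Holmstedt formula, using~\eqref{e3} to extract $u^{-\theta_1}\a(u)K(u,f)$, and then the first identity together with $\rho(u)=u^{\theta_1}/(\a(u)\|\b_1\|_{\widetilde{E}_1(0,u)})$. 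For the converse embedding, $J_1\lesssim\|f\|_{\overline{X}_{\tilde\theta,B_\theta,E}}$ is obtained exactly as the estimate of $I_2$ in Theorem~\ref{thm5.9}: the $\widetilde{L}_1$-bound~\eqref{023} of Lemma~\ref{key2hardy} followed by the Hardy inequality~\eqref{eHardy2} (legitimate since $\theta_1-\tilde\theta>0$). Then $J_2\lesssim J_1$: the function $t\rightsquigarrow\|s^{-\theta_1}\a(s)K(s,f)\|_{\widetilde{F}(t,\infty)}$ is non-increasing, so Lemma~\ref{thmFMS-4} (applied with inner r.i.\ space $\widetilde{E}_1$ and positive exponent $\theta_1-\tilde\theta$) collapses the inner $\widetilde{E}_1(u,\infty)$-norm to $\b_1(u)\|s^{-\theta_1}\a(s)K(s,f)\|_{\widetilde{F}(u,\infty)}$, after which $\b_1(u)\lesssim\|\b_1\|_{\widetilde{E}_1(0,u)}$ (Lemma~\ref{lem1}~(iii)) yields $J_2\lesssim J_1$. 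This argument applies verbatim for $0<\theta<1$ and for $\theta=0$; in the latter case the hypothesis $\|\b\|_{\widetilde{E}(1,\infty)}<\infty$ is exactly what guarantees $\|B_0\|_{\widetilde{E}(1,\infty)}<\infty$, so that the right-hand space is non-trivial, which is checked through the change of variables implicit in $\rho$ using Lemma~\ref{lem0}~(iii) and Remark~\ref{rem23}.

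Part~b) is the quickest: when $\theta=1$ one has $\rho(u)^{-1}\b(\rho(u))\cdot\rho(u)=\b(\rho(u))$, so inserting Theorem~\ref{5.10.2}~c) into the equivalence $\|f\|_{\overline{Y}_{1,\b,E}}\sim\|\rho(u)^{-1}\b(\rho(u))\overline{K}(\rho(u),f)\|_{\widetilde{E}}$ and using that $\|a+b\|_{\widetilde{E}}\sim\max\{\|a\|_{\widetilde{E}},\|b\|_{\widetilde{E}}\}$ for non-negative $a,b$ (lattice property and quasi-triangle inequality of $\widetilde{E}$), one gets
\[
\|f\|_{\overline{Y}_{1,\b,E}}\sim\max\Big\{\big\|B_1(u)\|t^{-\theta_1}\a(t)K(t,f)\|_{\widetilde{F}(u,\infty)}\big\|_{\widetilde{E}},\ \big\|\b(\rho(u))\big\|\b_1(t)\|s^{-\theta_1}\a(s)K(s,f)\|_{\widetilde{F}(t,\infty)}\big\|_{\widetilde{E}_1(u,\infty)}\big\|_{\widetilde{E}}\Big\},
\]
with $B_1(u)=\|\b_1\|_{\widetilde{E}_1(0,u)}\b(\rho(u))$; the two quantities on the right are precisely $\|f\|_{\overline{X}^{\mathcal R}_{\theta_1,B_1,E,\a,F}}$ and $\|f\|_{\overline{X}^{\mathcal R,\mathcal R}_{\theta_1,\b\circ\rho,E,\b_1,E_1,\a,F}}$, which is the assertion. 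Unlike Theorem~\ref{thm5.9}~c), no Hardy inequality is needed, since there is no $P_0$-term to absorb. The main obstacle is the collapse of the nested norm in $J_2$ via Lemma~\ref{thmFMS-4}: one must match that lemma correctly, with $t\rightsquigarrow\|s^{-\theta_1}\a(s)K(s,f)\|_{\widetilde{F}(t,\infty)}$ as the non-increasing function, the $SV$ factor $B_\theta(u)/(\a(u)\|\b_1\|_{\widetilde{E}_1(0,u)})$ and the positive power $u^{\theta_1-\tilde\theta}$ as the outer weight, and $\widetilde{E}_1$ as the inner r.i.\ space; the remaining care goes into the bookkeeping of finiteness and non-triviality in the limiting cases $\theta=0,1$, handled through the change of variables hidden in $\rho$.
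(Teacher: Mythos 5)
Your argument matches the paper's proof step by step: both apply Lemma~\ref{Le51} with $\rho(u)=u^{\theta_1}/(\a(u)\|\b_1\|_{\widetilde{E}_1(0,u)})$, plug in the two-term Holmstedt formula~\eqref{x01} of Theorem~\ref{5.10.2}~c), obtain the direct embedding by discarding the $J_2$-term and using~\eqref{e3}, and obtain the reverse embedding via the $I_2$- and $I_3$-type estimates (\eqref{023}+\eqref{eHardy2} for $J_1$, Lemma~\ref{thmFMS-4} plus Lemma~\ref{lem1}~(iii) for $J_2\lesssim J_1$); part~b) is treated exactly as Theorem~\ref{thm5.9}~c) with $I_7=0$. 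The proposal is correct and follows the same route as the paper.
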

 
 \begin{proof}
 Again, as in Theorem \ref{thm5.9} we use the notation $Y_0=X_0$, $Y_1=\overline{X}^{\mathcal R}_{\theta_1, \b_1,E_1,\a,F}$ and $\overline{K}(s,f)=K(s,f;Y_0,Y_1)$, $f\in Y_0+Y_1$, $s>0$.  Lemma \ref{Le51} establishes the equivalence
$$\|f\|_{\overline{Y}_{\theta,\b,E}}\sim \|\rho(u)^{-\theta}\b(\rho(u))\overline{K}(\rho(u),f)\|_{\widetilde{E}}.$$
for $0\leq \theta<1$.  Moreover,  \eqref{x01} and \eqref{e3} imply that
\begin{align*}
\|f\|_{\overline{Y}_{\theta,\b,E}}&\gtrsim \Big\|\rho(u)^{1-\theta}\b(\rho(u)) \| \b_1\|_{\widetilde{E}_1(0,u)}\|t^{- \theta_1} \a(t) K(t,f)\|_{ \widetilde{F}(u,\infty)}\Big\|_{\widetilde{E}}\\
&\gtrsim \bigg\|u^{\theta_1(1-\theta)}\Big(\frac{1}{\a(u)\|\b_1\|_{\widetilde{E}_1(0,u)}}\Big)^{1-\theta}\b(\rho(u)) \| \b_1\|_{\widetilde{E}_1(0,u)}u^{- \theta_1} \a(u) K(u,f)\bigg\|_{\widetilde{E}}\\
&=\|u^{-\theta_1\theta}\B_\theta(u) K(u,f)\|_{\widetilde{E}}=\|f\|_{\tilde{\theta},\B_\theta,E}.
\end{align*}
Hence the inclusion $\overline{Y}_{\theta,\b,E}\hookrightarrow \overline{X}_{\tilde{\theta},\B_\theta,E}$ is proved. The reverse inclusion can be done similarly to the estimate of $I_2$ and $I_3$ in the proof of Theorem \ref{thm5.9} a).

The  case $\theta=1$ can be proved similarly to  Theorem \ref{thm5.9} c) with $I_7=0$.
 \end{proof}
%%%%%%%%%%%%%%%%%%%%%%%%%%%%%%%%%%%%%%%%%%%%%%%%%%%%%%%%%%%%%%%%%%%%%%%%%%%%%%%%%%%%%%%%%%%%%%%%%%%%%%%%%%%%%%%%%%%%%%%%%%%%%%%%%%%%%%%%%%%%%%%%%%%%%%%%%%%%%%%%%%%%%%%%%%%

\subsection{The space $( \overline{X}^{\mathcal L}_{\theta_0,\b_0,E_0,\a,F},\overline{X}_{\theta_1, \b_1,E_1})_{\theta,\b,E}$, $0< \theta_0<\theta_1\leq1$ and $0\leq \theta\leq 1$}\label{subL}\hspace{1mm}

\vspace{2mm}
%In this subsection we identify the interpolation spaces
%$$(\overline{X}^{\mathcal L}_{\theta_0, \b_0,E_0,\a,F}, \overline{X}_{\theta_1,\b_1,E_1})_{\theta,\b,E}$$
%for $0 < \theta_{0} < \theta_{1}\leq 1$ and $0 \leq \theta < 1$. 
%We also study the spaces
%$$(\overline{X}^{\mathcal L}_{\theta_0, \b_0,E_0,\a,F}, X_{1})_{\theta,\b,E}$$
%for the same range of $\theta_{0}$, $\theta_{1}$ and $\theta$. 
The results of this subsection can be proved using the same ideas we used to prove those of \S \ref{subR}. However, since the proofs are lengthy, we will follow an alternative approach that uses symmetry arguments; besides, some of the proofs will be left to the reader.

\begin{thm}
Let $0<\theta_0<\theta_1<1$ and let $E$, $E_0$, $E_1$ and $F$ be r.i. spaces.
Let $\a$, $\b$,  $\b_0$, $\b_1\in SV$ with $\b_0$ satisfying  $\|\b_0\|_{\widetilde{E}_0(1,\infty)}<\infty$ and consider the function
$$\rho(u) = u^{\theta_1-\theta_0}\frac{\a(u)\|\b_0\|_{\widetilde{E}_0(u,\infty)}}{\b_1(u)},\quad u>0.$$
Then, the following statements hold:

\vspace{1mm}
\noindent a) If $0<\theta<1$, then
$$(\overline{X}^{\mathcal L}_{\theta_0, \b_0,E_0,\a,F}, \overline{X}_{\theta_1,\b_1,E_1})_{\theta,\b,E}= \overline{X}_{\tilde{\theta},B_\theta,E},$$
\vspace{1mm}
where
$$\tilde{\theta}=(1-\theta)\theta_0+\theta\theta_1\quad \mbox{and} \quad B_\theta(u)=\big(\a(u)\|\b_0\|_{\widetilde{E}_0(u,\infty)}\big)^{1-\theta}\big(\b_1(u)\big)^{\theta}\b(\rho(u)),\ u>0.$$

\vspace{1mm}
\noindent b) If $\theta=0$ and $\|\b\|_{\widetilde{E}(1,\infty)}<\infty$, then
$$
(\overline{X}^{\mathcal L}_{\theta_0, \b_0,E_0,\a,F}, \overline{X}_{\theta_1,\b_1,E_1})_{0,\b,E}= \overline{X}^{\mathcal L}_{\theta_0,B_0,E,a,F}\cap\overline{X}^{\mathcal L,\mathcal L}_{\theta_0,\b\circ \rho,E,\b_0,E_0,a,F},$$
where
$B_0(u)=\|\b_0\|_{\widetilde{E}_0(u,\infty)}\b(\rho(u))$, $u>0$.

\vspace{1mm}
\noindent c)If $\theta=1$ and $\|\b\|_{\widetilde{E}(0,1)}<\infty$, then
$$
(\overline{X}^{\mathcal L}_{\theta_0, \b_0,E_0,\a,F}, \overline{X}_{\theta_1,\b_1,E_1})_{1,\b,E}= \overline{X}^{\mathcal R}_{\theta_1,\b\circ\rho,E,\b_1,E_1}.
$$
\end{thm}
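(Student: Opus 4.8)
The plan is to deduce all three identities from Theorem \ref{thm5.9} by a symmetry argument, as announced at the beginning of \S\ref{subL}: reversing the order of the couple turns the $\mathcal L$-space into an $\mathcal R$-space while keeping the ordering of the exponents in the direction required by Theorem \ref{thm5.9}.

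First I would rewrite the couple. Writing $\overline{g}(t)=g(1/t)$, the outer symmetry $(A,B)_{\theta,\b,E}=(B,A)_{1-\theta,\overline{\b},E}$ of Lemma \ref{symLR}, followed by Lemma \ref{symLR} applied componentwise to pass from $(X_0,X_1)$ to $(X_1,X_0)$, gives
$$
\big(\overline{X}^{\mathcal L}_{\theta_0,\b_0,E_0,\a,F},\overline{X}_{\theta_1,\b_1,E_1}\big)_{\theta,\b,E}
=\Big((X_1,X_0)_{1-\theta_1,\overline{\b}_1,E_1},\,(X_1,X_0)^{\mathcal R}_{1-\theta_0,\overline{\b}_0,E_0,\overline{\a},F}\Big)_{1-\theta,\overline{\b},E}.
$$
Since $0<\theta_0<\theta_1<1$ we have $0<1-\theta_1<1-\theta_0<1$, so the couple on the right is exactly of the type treated in Theorem \ref{thm5.9}, applied to the underlying couple $(X_1,X_0)$. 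Moreover, the substitution $s=1/t$, which preserves the measure $ds/s$, yields $\|\overline{g}\|_{\widetilde{E}(0,u)}=\|g\|_{\widetilde{E}(1/u,\infty)}$ for every r.i.\ space $E$; hence the present hypothesis $\|\b_0\|_{\widetilde{E}_0(1,\infty)}<\infty$ is precisely the condition $\|\overline{\b}_0\|_{\widetilde{E}_0(0,1)}<\infty$ needed to invoke Theorem \ref{thm5.9}. If $\tilde\rho$ denotes the function $\rho$ of Theorem \ref{thm5.9} in this new setting, the same substitution gives $\tilde\rho(u)=1/\rho(1/u)$, with $\rho$ as in the present statement.

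Next I would apply Theorem \ref{thm5.9} to the right-hand side according to the value of $\theta$: part a) with interpolation parameter $1-\theta\in(0,1)$ for our case a); part c) for our case b), where $1-\theta=1$ (its hypothesis $\|\overline{\b}\|_{\widetilde{E}(0,1)}<\infty$ is our assumption $\|\b\|_{\widetilde{E}(1,\infty)}<\infty$); and part b) for our case c), where $1-\theta=0$ (its hypothesis $\|\overline{\b}\|_{\widetilde{E}(1,\infty)}<\infty$ is our assumption $\|\b\|_{\widetilde{E}(0,1)}<\infty$). Finally I would translate the resulting spaces back to $(X_0,X_1)$: Lemma \ref{symLR} handles the $\overline{X}_{\cdot,\cdot,\cdot}$-, $\mathcal R$- and $\mathcal L$-pieces, while Lemma \ref{symLL} turns the $\mathcal R,\mathcal R$-space delivered by Theorem \ref{thm5.9} c) into the $\mathcal L,\mathcal L$-space of our case b). The scalar checks are routine: $1-\big(\theta(1-\theta_1)+(1-\theta)(1-\theta_0)\big)=(1-\theta)\theta_0+\theta\theta_1=\tilde\theta$; applying the reflection $g\mapsto\overline{g}$ to the weight $(\overline{\b}_1)^{\theta}\big(\overline{\a}\,\|\overline{\b}_0\|_{\widetilde{E}_0(0,\cdot)}\big)^{1-\theta}\,(\overline{\b}\circ\tilde\rho)$ produced by Theorem \ref{thm5.9} a) returns $B_\theta(u)=\big(\a(u)\|\b_0\|_{\widetilde{E}_0(u,\infty)}\big)^{1-\theta}\big(\b_1(u)\big)^{\theta}\b(\rho(u))$; and $\overline{\overline{\b}\circ\tilde\rho}=\b\circ\rho$, so $\b\circ\rho$ is the outer weight in cases b) and c) (Remark \ref{rem23} gives a meaning to $\b\circ\rho$ up to equivalence, the only property used).

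The only real difficulty is the bookkeeping: one must follow the reflection $g\mapsto\overline{g}$ simultaneously through the outer $\widetilde{E}$-norm, the inner $\widetilde{E}_0$-, $\widetilde{E}_1$- and $\widetilde{F}$-norms, and the slowly varying functions $\a,\b_0,\b_1,\b$, verifying that the intervals $(0,u)$ and $(u,\infty)$ transform correctly under $s\mapsto 1/s$. Once $\tilde\rho(u)=1/\rho(1/u)$, the displayed formula for $\tilde\theta$, and $\overline{\overline{\b}\circ\tilde\rho}=\b\circ\rho$ are in place, each of a), b), c) is a direct rewriting of the corresponding part of Theorem \ref{thm5.9}. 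Alternatively, one could bypass the symmetry and argue exactly as in \S\ref{subR}, starting from the Holmstedt-type formula of Theorem \ref{5.11.2} and combining the Hardy-type Lemmas \ref{key2hardy} and \ref{teHardy1} with Lemma \ref{thmFMS-4}; we favour the symmetry route solely to avoid repeating those computations.
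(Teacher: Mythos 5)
Your proposal is correct and follows essentially the same route as the paper's proof: the paper also reduces via Lemma \ref{symLR} to the couple $\big((X_1,X_0)_{1-\theta_1,\overline{\b}_1,E_1},(X_1,X_0)^{\mathcal R}_{1-\theta_0,\overline{\b}_0,E_0,\overline{\a},F}\big)_{1-\theta,\overline{\b},E}$, invokes Theorem \ref{thm5.9} (the paper writes out only the case $\theta=0$ and notes the others are similar), and translates back with Lemmas \ref{symLR} and \ref{symLL}. Your bookkeeping of the reflected parameters $\tilde\rho(u)=1/\rho(1/u)$, $1-\tilde\theta'=\tilde\theta$, and $\overline{\overline{\b}\circ\tilde\rho}=\b\circ\rho$ matches the paper's computations.
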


\begin{proof}
We express the interpolation spaces  by means of Lemma~\ref{symLR}
\begin{align}\label{e56}
(\overline{X}^{\mathcal L}_{\theta_0, \b_0,E_0,\a,F},\overline{X}_{\theta_1,\b_1,E_1})_{\theta,\b,E}&=
(\overline{X}_{\theta_1,\b_1,E_1},\overline{X}^{\mathcal L}_{\theta_0, \b_0,E_0,\a,F})_{1-\theta,\overline{\b},E}\nonumber\\
&=((X_1,X_0)_{1-\theta_1,\overline{\b}_1,E_1},(X_1,X_0)^{\mathcal R}_{1-\theta_0, \overline{\b}_0,E_0,\overline{a},F})_{1-\theta,\overline{\b},E}.
\end{align}
Here the functions $\overline{a}$, $\overline{b}$ and $\overline{b}_{i}$, $i = 0,1$, have the usual meaning.

Taking $\theta=0$ in \eqref{e56} and applying Theorem \ref{thm5.9} c) we have
$$(\overline{X}^{\mathcal L}_{\theta_0, \b_0,E_0,\a,F},\overline{X}_{\theta_1,\b_1,E_1})_{0,\b,E}=(X_1,X_0)^{\mathcal R}_{1-\theta_0,\B^{\#}_0,E,\overline{a},F}\cap(X_1,X_0)^{\mathcal R,\mathcal R}_{1-\theta_0,\overline{\b}\circ\rho^{\#},E,\overline{\b}_0,E_0,\overline{\a},F},
$$
where $\rho^{\#}(u)=u^{\theta_1-\theta_0}\frac{\overline{\b}_1(u)}{\overline{a}(u)\|\overline{\b}_0\|_{\widetilde{E}_0(0,u)}}$ and $\B^{\#}_0(u)=\|\overline{\b}_0\|_{\widetilde{E}_0(0,u)}\overline{\b}(\rho^{\#}(u))$, $u>0$.
Since $\|\overline{\b}_0\|_{\widetilde{E}_0(0,\frac1u)}=\|\b_0\|_{\widetilde{E}_0(u,\infty)}$ it  yields that 
\begin{align*}
\overline{\B}^{\#}_0(u)&=\B^{\#}_0\Big(\frac{1}{u}\Big)=\|\overline{\b}_0\|_{\widetilde{E}_0(0,\frac{1}{u})}\overline{\b}\bigg(\Big(\frac{1}{u} \Big)^{\theta_1-\theta_0}\frac{\overline{\b}_1(\frac{1}{u})}{\overline{a}(\frac{1}{u})\|\overline{\b}_0\|_{\widetilde{E}_0(0,\frac{1}{u})}}\bigg)\\
&=\|\b_0\|_{\widetilde{E}_0(u,\infty)}\b\Big( u^{\theta_1-\theta_0}\tfrac{\a(u)\|\b_0\|_{\widetilde{E}_0(u,\infty)}}{\b_1(u)}\Big)=\B_0(u)
\end{align*}
and $\overline{\overline{\b}\circ\rho^{\#}}(u)=\b(\rho(u))$, $u>0$  and consequently Lemmas \ref{symLR}  and \ref{symLL} show
$$(\overline{X}^{\mathcal L}_{\theta_0, \b_0,E_0,\a,F}, \overline{X}_{\theta_1,\b_1,E_1})_{0,\b,E}=\overline{X}^{\mathcal L}_{1-\theta_0,\B_0,E,\a,F}\cap\overline{X}^{\mathcal L,\mathcal L}_{1-\theta_0,\b\circ\rho,E,\b_0,E_0,\a,F}.$$ The cases $\theta=1$ and $0<\theta<1$ can be proved similarly.\end{proof}

\begin{thm}\label{thm49}
Let $0<\theta_0<1$ and let $E$, $E_0$, $E_1$ and $F$ be r.i. spaces.
Let $\a$, $\b$,  $\b_0$, $\b_1\in SV$ with $\b_0$ and $\b_1$ satisfying  $\|\b_0\|_{\widetilde{E}_0(1,\infty)}<\infty$, $\|\b_1\|_{\widetilde{E}_1(0,1)}<\infty$, respectively, and consider the function
$$\rho(u) = u^{1-\theta_0}\frac{\a(u)\|\b_0\|_{\widetilde{E}_0(u,\infty)}}{\|\b_1\|_{\widetilde{E}_1(0,u)} },\quad u>0.$$
Then, the following statements hold:

\vspace{1mm}
\noindent a) If $0<\theta<1$, then
$$(\overline{X}^{\mathcal L}_{\theta_0, \b_0,E_0,\a,F},\overline{X}_{1,\b_1,E_1})_{\theta,\b,E}= \overline{X}_{\tilde{\theta},B_\theta,E},$$
\vspace{2mm}
where
$$\tilde{\theta}=(1-\theta)\theta_0+\theta\quad \mbox{and} \quad B_\theta(u)=\big(\a(u)\|\b_0\|_{\widetilde{E}_0(u,\infty)}\big)^{1-\theta}\big(\|\b_1\|_{\widetilde{E}_1(0,u)}\big)^{\theta}\b(\rho(u)),\ u>0.$$

\vspace{1mm}
\noindent b) If $\|\b\|_{\widetilde{E}(1,\infty)}<\infty$, then
$$
(\overline{X}^{\mathcal L}_{\theta_0, \b_0,E_0,\a,F},\overline{X}_{1,\b_1,E_1})_{0,\b,E}= \overline{X}^{\mathcal L}_{\theta_0,B_0,E,\a,F}\cap\overline{X}^{\mathcal L,\mathcal L}_{\theta_0,\b\circ\rho,E,\b_0,E_0,a,F},
$$
where $B_0(u)=\|\b_0\|_{\widetilde{E}_0(u,\infty)}\b(\rho(u))$, $u>0$.

\vspace{1mm}
\noindent c) If $\|\b\|_{\widetilde{E}(0,1)}<\infty$, then
$$(\overline{X}^{\mathcal L}_{\theta_0, \b_0,E_0,\a,F},\overline{X}_{1,\b_1,E_1})_{1,\b,E}= \overline{X}_{1,B_1,E}\cap\overline{X}^{\mathcal R}_{1,\b\circ\rho,E,\b_1,E_1},$$
where $B_1(u)=\|\b_1\|_{\widetilde{E}_1(0,u)}\b(\rho(u))$,  $u>0$.
\end{thm}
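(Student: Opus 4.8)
The plan is to derive the result from Theorem~\ref{thm5.10} by the same symmetry device used in the previous theorem, rather than running a direct argument of the type in~\S\ref{subR}. Writing $\overline g(t)=g(1/t)$ as usual and applying Lemma~\ref{symLR} twice,
\begin{align*}
(\overline{X}^{\mathcal L}_{\theta_0, \b_0,E_0,\a,F},\overline{X}_{1,\b_1,E_1})_{\theta,\b,E}
&=(\overline{X}_{1,\b_1,E_1},\overline{X}^{\mathcal L}_{\theta_0, \b_0,E_0,\a,F})_{1-\theta,\overline{\b},E}\\
&=\big((X_1,X_0)_{0,\overline{\b}_1,E_1},(X_1,X_0)^{\mathcal R}_{1-\theta_0,\overline{\b}_0,E_0,\overline{\a},F}\big)_{1-\theta,\overline{\b},E}.
\end{align*}
Since $0<1-\theta_0<1$, $\|\overline{\b}_0\|_{\widetilde{E}_0(0,1)}=\|\b_0\|_{\widetilde{E}_0(1,\infty)}<\infty$ and $\|\overline{\b}_1\|_{\widetilde{E}_1(1,\infty)}=\|\b_1\|_{\widetilde{E}_1(0,1)}<\infty$, the couple on the right is exactly of the form covered by Theorem~\ref{thm5.10}, with $(X_1,X_0)$ in place of $(X_0,X_1)$, $1-\theta_0$ in place of $\theta_1$, and $\overline{\b}_1,\overline{\b}_0,\overline{\a}$ playing the roles of $\b_0,\b_1,\a$ there. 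Its Holmstedt function is
$$\rho^{\#}(u)=u^{1-\theta_0}\frac{\|\overline{\b}_1\|_{\widetilde{E}_1(u,\infty)}}{\overline{\a}(u)\|\overline{\b}_0\|_{\widetilde{E}_0(0,u)}},$$
and the identity $\|g\|_{\widetilde{E}(1/t,\infty)}=\|g(1/s)\|_{\widetilde{E}(0,t)}$ gives $\rho^{\#}(1/u)=1/\rho(u)$, i.e. $\overline{\rho^{\#}}=1/\rho$, whence also $\overline{\overline{\b}\circ\rho^{\#}}=\b\circ\rho$.

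I would then treat the three ranges of $\theta$ separately. For $0<\theta<1$ apply Theorem~\ref{thm5.10}~a): the right-hand side equals $(X_1,X_0)_{(1-\theta)(1-\theta_0),B^{\#},E}$, and translating back via Lemma~\ref{symLR}, using $1-(1-\theta)(1-\theta_0)=(1-\theta)\theta_0+\theta=\tilde\theta$ and $\overline{\rho^{\#}}=1/\rho$, yields $\overline{X}_{\tilde\theta,B_\theta,E}$ with exactly the weight of part~a). For $\theta=0$ (so $1-\theta=1$), the hypothesis $\|\b\|_{\widetilde{E}(1,\infty)}<\infty$ reads $\|\overline{\b}\|_{\widetilde{E}(0,1)}<\infty$, which permits applying Theorem~\ref{thm5.10}~c); it produces $(X_1,X_0)^{\mathcal R}_{1-\theta_0,B^{\#}_1,E,\overline{\a},F}\cap(X_1,X_0)^{\mathcal R,\mathcal R}_{1-\theta_0,\overline{\b}\circ\rho^{\#},E,\overline{\b}_0,E_0,\overline{\a},F}$, and Lemmas~\ref{symLR} and~\ref{symLL} convert these into $\overline{X}^{\mathcal L}_{\theta_0,B_0,E,\a,F}$ and $\overline{X}^{\mathcal L,\mathcal L}_{\theta_0,\b\circ\rho,E,\b_0,E_0,a,F}$, which is part~b). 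For $\theta=1$ (so $1-\theta=0$), the hypothesis $\|\b\|_{\widetilde{E}(0,1)}<\infty$ reads $\|\overline{\b}\|_{\widetilde{E}(1,\infty)}<\infty$, which allows Theorem~\ref{thm5.10}~b): it gives $(X_1,X_0)_{0,B^{\#}_0,E}\cap(X_1,X_0)^{\mathcal L}_{0,\overline{\b}\circ\rho^{\#},E,\overline{\b}_1,E_1}$, and symmetry turns this into $\overline{X}_{1,B_1,E}\cap\overline{X}^{\mathcal R}_{1,\b\circ\rho,E,\b_1,E_1}$, which is part~c).

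The argument involves no analytic difficulty, since all the estimates are already packaged in Theorem~\ref{thm5.10} and the symmetry lemmas; the single point requiring care—exactly as in the previous theorem—is the bookkeeping under the involution $g\mapsto\overline g$. One must check that the side conditions of Theorem~\ref{thm5.10} translate back precisely to the stated hypotheses, and that each $SV$-weight produced transforms correctly: $\overline{B^{\#}}=B_\theta$ in part~a), $\overline{B^{\#}_1}=B_0$ in part~b), and $\overline{B^{\#}_0}=B_1$ in part~c), together with $\overline{\rho^{\#}}=1/\rho$ and $\overline{\overline{\b}\circ\rho^{\#}}=\b\circ\rho$. All of these follow routinely from Lemma~\ref{lem0}(i), the fact that $\b(1/t)\in SV$, the relation $\|g\|_{\widetilde{E}(1/t,\infty)}=\|g(1/s)\|_{\widetilde{E}(0,t)}$ and \eqref{eKK}, so I would spell out only one case (say $\theta=0$) in detail and leave the other two to the reader, in keeping with the style of~\S\ref{subL}.
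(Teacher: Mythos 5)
Your argument is correct and is precisely the route the paper intends: the paper proves the first theorem of \S\ref{subL} in detail by this symmetry reduction to \S\ref{subR} and then leaves Theorems~\ref{thm49} and~\ref{thm6.6} to the reader. You correctly identify that with $\theta_1=1$ the transformed couple has $\theta_0'=0$, so Theorem~\ref{thm5.10} (not Theorem~\ref{thm5.9}) is the right input, you match the three cases of $\theta$ to the three parts of that theorem in the right way, and the weight and $\rho$ bookkeeping under the involution $g\mapsto\overline g$ checks out.
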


\begin{thm}\label{thm6.6}
Let $0<\theta_1<1$ and let $E$, $E_0$, $E_1$ and $F$ be r.i. spaces.
Let $\a$, $\b$,  $\b_0$ with $\b_0$ satisfying  $\|\b_0\|_{\widetilde{E}_0(1,\infty)}<\infty$ and consider the function
$$\rho(u) = u^{1-\theta_0}\a(u)\|\b_0\|_{\widetilde{E}_0(u,\infty)},\quad u>0.$$
Then, the following statements hold:

\vspace{1mm}
\noindent a) If $0<\theta<1$, or $\theta=1$ and $\|\b\|_{\widetilde{E}(0,1)}<\infty$, then
$$(\overline{X}^{\mathcal L}_{\theta_0, \b_0,E_0,\a,F},X_1)_{\theta,\b,E}= \overline{X}_{\tilde{\theta},B_\theta,E},$$
\vspace{1mm}
where
$$\tilde{\theta}=(1-\theta)\theta_0+\theta\quad \mbox{and} \quad B_\theta(u)=\big(\a(u)\|\b_0\|_{\widetilde{E}_0(u,\infty)}\big)^{1-\theta}\b(\rho(u))\ u>0.$$

\vspace{1mm}
\noindent b) If $\|\b\|_{\widetilde{E}(1,\infty)}<\infty$, then
$$
(\overline{X}^{\mathcal L}_{\theta_0, \b_0,E_0,\a,F},X_1)_{0,\b,E}= \overline{X}^{\mathcal L}_{\theta_0,B_0,E,\a,F}\cap\overline{X}^{\mathcal L,\mathcal L}_{\theta_0,\b\circ\rho,E,\b_0,E_0,a,F},
$$
where $B_0(u)=\|\b_0\|_{\widetilde{E}_0(u,\infty)}\b(\rho(u))$, $u>0$.
\end{thm}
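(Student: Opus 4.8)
The plan is to obtain Theorem~\ref{thm6.6} by the symmetry reduction already used throughout \S\ref{subL}, this time reducing to Theorem~\ref{thm511} (which, like our pair, has a plain space as its first entry). Writing $\overline{\a}(t)=a(1/t)$, $\overline{\b}(t)=\b(1/t)$, $\overline{\b}_0(t)=\b_0(1/t)$, the first step is to apply Lemma~\ref{symLR} to the outer method and then to the space $\overline{X}^{\mathcal L}_{\theta_0,\b_0,E_0,\a,F}$, which yields
$$(\overline{X}^{\mathcal L}_{\theta_0, \b_0,E_0,\a,F},X_1)_{\theta,\b,E}=\big(X_1,(X_1,X_0)^{\mathcal R}_{1-\theta_0,\overline{\b}_0,E_0,\overline{\a},F}\big)_{1-\theta,\overline{\b},E},$$
where $X_1$ now stands for the first space of the couple $(X_1,X_0)$. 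The right-hand side is precisely a space covered by Theorem~\ref{thm511} for the couple $(X_1,X_0)$, with $\theta_1=1-\theta_0$, $\b_1=\overline{\b}_0$, $E_1=E_0$, $\a$ replaced by $\overline{\a}$, outer exponent $1-\theta$ and outer weight $\overline{\b}$. Note $0<1-\theta_0<1$, and the hypothesis $\|\b_0\|_{\widetilde{E}_0(1,\infty)}<\infty$ is exactly $\|\overline{\b}_0\|_{\widetilde{E}_0(0,1)}<\infty$, the nontriviality condition of Theorem~\ref{thm511}.

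Second, I would invoke the appropriate branch of Theorem~\ref{thm511} according to $\theta$: if $0<\theta<1$ then $1-\theta\in(0,1)$ and case~a) applies; if $\theta=1$ then $1-\theta=0$ and case~a) applies again, its requirement $\|\overline{\b}\|_{\widetilde{E}(1,\infty)}<\infty$ being just $\|\b\|_{\widetilde{E}(0,1)}<\infty$; if $\theta=0$ then $1-\theta=1$ and case~b) applies, its requirement $\|\overline{\b}\|_{\widetilde{E}(0,1)}<\infty$ being just $\|\b\|_{\widetilde{E}(1,\infty)}<\infty$. In every case Theorem~\ref{thm511} represents the space above, relative to $(X_1,X_0)$, through $\rho^{\#}(u)=u^{1-\theta_0}\big(\overline{\a}(u)\|\overline{\b}_0\|_{\widetilde{E}_0(0,u)}\big)^{-1}$ and the weights $B^{\#}_{1-\theta}(u)=\big(\overline{\a}(u)\|\overline{\b}_0\|_{\widetilde{E}_0(0,u)}\big)^{1-\theta}\overline{\b}(\rho^{\#}(u))$ (case~a), respectively $B^{\#}_1(u)=\|\overline{\b}_0\|_{\widetilde{E}_0(0,u)}\overline{\b}(\rho^{\#}(u))$ together with an $\overline{X}^{\mathcal R,\mathcal R}$ factor (case~b).

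Third, I would transport the outcome back to the couple $(X_0,X_1)$ using Lemma~\ref{symLR} for the $\overline{X}_{\cdot,\cdot,E}$ and $\overline{X}^{\mathcal R}$ factors, Lemma~\ref{symLL} for the $\overline{X}^{\mathcal R,\mathcal R}$ factor, and \eqref{eKK}. The only computation is the reflection $u\rightsquigarrow 1/u$: from $\overline{\a}(1/u)=a(u)$ and $\|\overline{\b}_0\|_{\widetilde{E}_0(0,1/u)}=\|\b_0\|_{\widetilde{E}_0(u,\infty)}$ one reads off $\overline{\rho^{\#}}(u)=1/\rho(u)$ with $\rho(u)=u^{1-\theta_0}\a(u)\|\b_0\|_{\widetilde{E}_0(u,\infty)}$, hence $\overline{\overline{\b}\circ\rho^{\#}}(u)=\b(\rho(u))$, $\overline{B^{\#}_{1-\theta}}(u)=\big(\a(u)\|\b_0\|_{\widetilde{E}_0(u,\infty)}\big)^{1-\theta}\b(\rho(u))=B_\theta(u)$ and $\overline{B^{\#}_1}(u)=\|\b_0\|_{\widetilde{E}_0(u,\infty)}\b(\rho(u))=B_0(u)$; moreover $1-(1-\theta)(1-\theta_0)=(1-\theta)\theta_0+\theta=\tilde\theta$, so the exponents match. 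This delivers a) and b).

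The main obstacle I anticipate is bookkeeping, not analysis: one has to keep careful track of how the involution $t\mapsto 1/t$ interchanges $\|\cdot\|_{\widetilde{F}(0,t)}$ with $\|\cdot\|_{\widetilde{F}(t,\infty)}$ and the restriction intervals inside the $\overline{X}^{\mathcal R,\mathcal R}$ construction, of how it acts on the compositions $\b\circ\rho$, and one must verify that the three nontriviality conditions appearing in the branches of Theorem~\ref{thm511} correspond exactly to the three hypotheses on $\b$ stated in parts a) and b). No fresh estimate is needed, since the Hardy- and Holmstedt-type ingredients are already encapsulated in Theorem~\ref{thm511}. One could instead give a direct argument imitating the proof of Theorem~\ref{thm5.9}, using the Holmstedt formula \eqref{x1L} of Theorem~\ref{5.11.2}~c) in place of \eqref{x01}; I would only sketch this alternative.
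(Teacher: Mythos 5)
Your argument is correct and coincides with the route the paper itself adopts for \S4.3: reduce by the symmetry Lemmas \ref{symLR} and \ref{symLL} and the identity \eqref{eKK} to the already-proved result for the dual couple (here Theorem \ref{thm511}, just as the paper's Theorem 4.7 reduces to Theorem \ref{thm5.9}), then read the reflected parameters back to $(X_0,X_1)$. The paper explicitly leaves this proof to the reader, and your bookkeeping of $\rho^{\#}$, $B^{\#}_{1-\theta}$, $B^{\#}_1$ and of the matching nontriviality hypotheses (note the statement's ``$0<\theta_1<1$'' is a typo for $0<\theta_0<1$) is exactly what is needed.
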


\begin{rem}
\textup{
We observe that generalized Holmstedt type formula   \eqref{x01} (or \eqref{x1L}) holds when the space $X_0$ (or $X_1$) is replaced by an intermediate space $\widetilde{X}_0$ of class 0 (or $\widetilde{X}_1$ of class 1, respectively);  see \cite[Chap. 5]{Bennett-Sharpley} for the definition. Consequently, Theorem  \ref{thm6.6} is also true for any intermediate space $\widetilde{X}_0$ of class $0$ and Theorem \ref{thm511} is true for any intermediate space $\widetilde{X}_1$ of class $1$.}
\end{rem}

%%%%%%%%%%%%%%%%%%%%%%%%%%%%%%%%%%%%%%%%%%%%%%%%%%%%%%%%%%%%%%%%%%%%%%%%%%%%%%%%%%%%%%%%%%%%%%%%%%%%%%%%%%%%%%%%%%%%%%%%%%%%%%%%%%%%%%%%%%%%%%%%%%%%%%%%%%%%%%%%%%%%%%%%%%%%%%%%%%%%%%%%%%%%%%%%%%%%%%%%%%%%%%%%%%%%%%%%%%%%%%%%%%%%%%%%%%%%%%%%%%%%%%%%%%%%%%%%%%%%%%%%%%%%%%%%%%%%%%%%%%%%%

\section{Applications}\label{applications}

The applications we consider in this section will involve ordered (quasi)- Banach
couples $X = (X_0, X_1)$, in the sense that  $X_1\hookrightarrow X_0$. First we briefly review how our conditions adapt to this simpler setting.

\subsection{Ordered couples}\label{aordered}\hspace{2mm}

\vspace{2mm}

Given a real parameter $0 \leq \theta \leq 1$,  $a, \b, c\in SV(0,1)$ and r.i. spaces $E, F, G$ on $(0,1)$, the spaces  $\overline{X}_{\theta,\b,E}$,
$\overline{X}_{\theta,\b,E,a,F}^{\mathcal L}$ and $\overline{X}_{\theta,c,E,\b,F,a,G}^{\mathcal L,\mathcal L }$  are defined just as in Definitions \ref{defLR} and \ref{defLRR}; the only change being that $\widetilde{E}(0,\infty)$ must be replaced by $\widetilde{E}(0, 1)$, see \cite{FMS-RL1}.
Likewise the spaces  $\overline{X}_{\theta,\b,E,a,F}^{\mathcal R}$  and 
$\overline{X}_{\theta,c,E,\b,F,a,G}^{\mathcal R,\mathcal R}$ are defined as
$$\overline{X}_{\theta,\b,E,a,F}^{\mathcal R}=\Big\{f\in X_0:\Big \|  \b(t) \|   s^{-\theta} a(s) K(s,f) \|_{\widetilde{F}(t,1)}      \Big   \|_{\widetilde{E}(0,1)}<\infty\Big\}$$
and
$$\overline{X}_{\theta,c,E,\b,F,a,G}^{\mathcal R,\mathcal R }=\Big\{f\in X_0: 
\bigg\|c(u)\big\|\b(t) \|s^{-\theta} \a(s) K(s,f) \|_{\widetilde{G}(t,1)}\Big\|_{\widetilde{F}(u,1)}\bigg\|_{\widetilde{E}(0,1)} < \infty\Big\}.$$

Of course, all the results in the paper remain true if we work with an ordered couple  and use as parameters slowly varying functions on $(0,1)$ and r.i. spaces on $(0,1)$. In these cases all assumptions concerning the interval $(1,\infty)$ must be omitted.

It is worth to mention that if the couple is ordered, then the scale $\{\overline{X}_{\theta,\b,E}\}_{0\leq \theta\leq 1}$ is also ordered.

\begin{lem}\label{lemainclusion}\cite[Lemma 5.2]{FMS-RL1}
Let $\overline{X}$ be an ordered (quasi-) Banach couple,  $\b_0,\ \b_1\in SV(0,1)$ and $E_0,\ E_1$ r.i. spaces on $(0,1)$. If $0\leq\theta_0<\theta_1\leq 1$, then
$$\overline{X}_{\theta_1,\b_1,E_1}\hookrightarrow \overline{X}_{\theta_0,\b_0,E_0}.$$
\end{lem}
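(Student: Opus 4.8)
\textbf{Proof proposal for Lemma \ref{lemainclusion}.}

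The plan is to reduce the inclusion to a pointwise estimate on the $K$-functional together with one application of Lemma \ref{lem1}~(i). Since the couple $\overline{X}$ is ordered, $X_1\hookrightarrow X_0$ and hence $X_0+X_1=X_0$, while for every $f\in X_0$ and every $t\geq 1$ one has $K(t,f;X_0,X_1)\sim K(1,f;X_0,X_1)\sim \|f\|_{X_0}$; more importantly, the function $t\rightsquigarrow K(t,f)$ is non-decreasing and $t\rightsquigarrow K(t,f)/t$ is non-increasing on $(0,1)$. I would therefore fix $f\in \overline{X}_{\theta_1,\b_1,E_1}$ and aim to bound $\|t^{-\theta_0}\b_0(t)K(t,f)\|_{\widetilde{E}_0(0,1)}$ by a constant times $\|f\|_{\theta_1,\b_1,E_1}=\|t^{-\theta_1}\b_1(t)K(t,f)\|_{\widetilde{E}_1(0,1)}$.

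The key step is the pointwise bound on $K(t,f)$ for $t\in(0,1)$ coming from membership in $\overline{X}_{\theta_1,\b_1,E_1}$. If $0<\theta_1<1$ this is exactly \eqref{eK}: $K(t,f)\lesssim t^{\theta_1}\b_1(t)^{-1}\|f\|_{\theta_1,\b_1,E_1}$; if $\theta_1=1$ one uses instead the variant of \eqref{eK} with $\b_1(t)$ replaced by $\|\b_1\|_{\widetilde{E}_1(0,t)}$ (valid since the ordered setting forces $\|\b_1\|_{\widetilde{E}_1(0,1)}<\infty$ whenever the space is nontrivial), and in either subcase $\b_1(t)\lesssim\|\b_1\|_{\widetilde{E}_1(0,t)}$ by Lemma \ref{lem1}~(iii), so one may as well write $K(t,f)\lesssim t^{\theta_1}\|\b_1\|_{\widetilde{E}_1(0,t)}^{-1}\|f\|_{\theta_1,\b_1,E_1}$ uniformly. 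Plugging this into the $\widetilde{E}_0(0,1)$-norm gives
\begin{align*}
\|t^{-\theta_0}\b_0(t)K(t,f)\|_{\widetilde{E}_0(0,1)}
&\lesssim \|f\|_{\theta_1,\b_1,E_1}\,\Big\|t^{\theta_1-\theta_0}\frac{\b_0(t)}{\|\b_1\|_{\widetilde{E}_1(0,t)}}\Big\|_{\widetilde{E}_0(0,1)}.
\end{align*}
Since $\theta_1-\theta_0>0$ and, by Lemma \ref{lem1}~(ii), $t\rightsquigarrow\b_0(t)/\|\b_1\|_{\widetilde{E}_1(0,t)}$ is slowly varying, Lemma \ref{lem1}~(i) yields $\big\|t^{\theta_1-\theta_0}\b_0(t)\|\b_1\|_{\widetilde{E}_1(0,t)}^{-1}\big\|_{\widetilde{E}_0(0,1)}\sim \b_0(1)/\|\b_1\|_{\widetilde{E}_1(0,1)}<\infty$, a finite constant independent of $f$. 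This gives $\|f\|_{\theta_0,\b_0,E_0}\lesssim\|f\|_{\theta_1,\b_1,E_1}$, hence the claimed continuous embedding; the cases $\theta_0=0$ and $\theta_1=1$ are handled by the same computation after replacing the relevant $\b$ by its $\widetilde{E}$-norm over $(0,t)$, which is exactly where the nontriviality hypotheses enter.

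The main obstacle is bookkeeping at the endpoints: one must make sure that in the extremal cases $\theta_0=0$ or $\theta_1=1$ the correct modified weights are used in \eqref{eK}, and that the resulting slowly varying function still has finite $\widetilde{E}_0(0,1)$-norm — but this is guaranteed by Remark \ref{rem23} together with the standing nontriviality assumptions on the spaces (if $\overline{X}_{\theta_1,\b_1,E_1}=\{0\}$ there is nothing to prove). No genuinely hard estimate is involved; the argument is a one-line application of the already-established pointwise $K$-functional bound and the elementary weight computation of Lemma \ref{lem1}.
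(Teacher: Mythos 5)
Your strategy is the natural one and it works: bound $K(t,f)$ pointwise from $\|f\|_{\theta_1,\b_1,E_1}$ using the monotonicity of $K(s,f)/s$ together with Lemma~\ref{lem1}~(i), then substitute into the $\widetilde{E}_0(0,1)$-norm and evaluate the resulting weight with Lemma~\ref{lem1}~(i) again, exploiting $\theta_1-\theta_0>0$. The paper itself does not reprove this lemma (it cites \cite[Lemma~5.2]{FMS-RL1}), so there is no in-paper proof to compare against; on its own terms your proposal is essentially right, except for one step whose logic is reversed.

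The place you should fix is the ``uniform'' reformulation. You correctly have, for $0<\theta_1<1$, that $K(t,f)\lesssim t^{\theta_1}\b_1(t)^{-1}\|f\|_{\theta_1,\b_1,E_1}$, and, for $\theta_1=1$, that $K(t,f)\lesssim t\,\|\b_1\|_{\widetilde{E}_1(0,t)}^{-1}\|f\|_{1,\b_1,E_1}$. You then invoke Lemma~\ref{lem1}~(iii), i.e.\ $\b_1(t)\lesssim\|\b_1\|_{\widetilde{E}_1(0,t)}$, and claim that this lets you write $K(t,f)\lesssim t^{\theta_1}\|\b_1\|_{\widetilde{E}_1(0,t)}^{-1}\|f\|_{\theta_1,\b_1,E_1}$ in all cases. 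That implication goes the wrong way: $\b_1(t)\lesssim\|\b_1\|_{\widetilde{E}_1(0,t)}$ gives $\b_1(t)^{-1}\gtrsim\|\b_1\|_{\widetilde{E}_1(0,t)}^{-1}$, so the bound with $\|\b_1\|_{\widetilde{E}_1(0,t)}$ in the denominator is the \emph{stronger} statement and is not a consequence of the bound with $\b_1(t)$. Indeed, for $0<\theta_1<1$ and $\|\b_1\|_{\widetilde{E}_1(0,1)}=\infty$ your ``uniform'' form would read $K(t,f)\lesssim 0$, which is false. The cure is simply to keep the two cases separate: when $0<\theta_1<1$ carry the weight $t^{\theta_1-\theta_0}\b_0(t)/\b_1(t)$, whose $\widetilde{E}_0(0,1)$-norm is $\sim\b_0(1)/\b_1(1)$ by Lemma~\ref{lem1}~(i); when $\theta_1=1$ carry $t^{1-\theta_0}\b_0(t)/\|\b_1\|_{\widetilde{E}_1(0,t)}$, whose norm is $\sim\b_0(1)/\|\b_1\|_{\widetilde{E}_1(0,1)}$. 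In both cases the constant is finite (and, as you correctly note, if $\theta_1=1$ and $\|\b_1\|_{\widetilde{E}_1(0,1)}=\infty$ there is nothing to prove since then $\overline{X}_{1,\b_1,E_1}=\{0\}$). No modification is needed on the $\theta_0$ side; the space norm uses $\b_0(t)$ itself for all $\theta_0\in[0,1]$, so your parenthetical about ``replacing the relevant $\b$ by its $\widetilde{E}$-norm over $(0,t)$'' applies only to the pointwise estimate on $K$ at $\theta_1=1$, not to the target norm.
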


\vspace{1mm}

\subsection{Grand and small Lebesgue spaces}\label{S_grand}\hspace{2mm}

\vspace{2mm}
Next, we apply our previous results to the grand and small Lebesgue spaces. Following the paper by Fiorenza and Karadzhov  \cite{FK} we give the following definition:
\begin{defn}\label{def_gLp}
Let $(\Omega,\mu)$ be a finite measure space with non-atomic measure $\mu$ and assume that $\mu(\Omega)=1$. Let $1<p<\infty$ and $\alpha>0$. The space $L^{p),\alpha}(\Omega)$ is  the set of all measurable functions $f$ on $(\Omega,\mu)$ such that
\begin{equation}\label{lpa1}
\|f\|_{p),\alpha}=\Big\|\ell^{-\frac{\alpha}{p}}(t)\|f^{*}(s)\|_{L_p(t,1)}\Big\|_{L_\infty(0,1)}<\infty.
\end{equation}
The small Lebesgue space $L^{(p,\alpha}(\Omega)$ is  the set of all measurable functions $f$ on  $(\Omega,\mu)$ such that
\begin{equation}\label{lpa2}
\|f\|_{(p,\alpha}=\Big\|\ell^{\frac{\alpha}{p'}-1}(t)\|f^{*}(s)\|_{L_p(0,t)}\Big\|_{\widetilde{L}_1(0,1)}<\infty
\end{equation}
where $\frac1p+\frac1{p'}=1$.
\end{defn}

The classical grand Lebesgue space $L^{p)}(\Omega):=L^{p),1}(\Omega)$ was introduced by Iwaniec and Sbordone in \cite{IS} in connection with the integrability properties of the Jacobian under minimal hypothesis. The classical small Lebesgue space $L^{(p}(\Omega):=L^{(p,1}(\Omega)$ was introduced by Fiorenza  in \cite{F1}  as dual to the grand Lebesgue spaces; that is $ (L^{(p'})' = L^{p)}.$
Since then many authors have studied relevant properties of these spaces, such as interpolation, boundedness of classical operators, generalized versions, etc. For more information about this spaces and their generalizations see the recent paper \cite{FFG} and the references \cite{AFFGR,AHM,ALM,CO2015,oscar2017,FMS-2,FFGKR}.

We shall also consider ultrasymmetric spaces.
\begin{defn}
Let $1\leq p<\infty$, $\b\in SV$ and $E$ an r.i. space. The ultrasymmetric space $L_{p,\b,E}(\Omega,\mu)$ is  the set of all measurable functions on $(\Omega,\mu)$ such that 
$$\|f\|_{L_{p,\b,E}}=\|t^{1/p}\b(t)f^*(t)\|_{\widetilde{E}}<\infty.$$
\end{defn} 

This class of spaces was introduced and studied by E. Pustylnik \cite{Pust-ultrasymm} and comprises many classical examples as  \textit{Lorentz-Karamata} spaces $L_{p,q;\b}$ (see \cite{GOT,neves}), generalized \textit{Lorentz-Zygmund} spaces  \cite{op}, \textit{Lorentz-Zygmung} spaces spaces $L^{p,q}(\log L)^\alpha$ (see  \cite{BR,Bennett-Sharpley}) and some Orlicz spaces. In case  $E=L_q$ and $b\equiv 1$, we have the classical \textit{Lorentz space} $L_{p,q}$ and the \textit{Lebesgue space} $L_{p}$. 

For convenience we will denote  the functions spaces as $L^{p),\alpha}$, $L^{(p,\alpha}$, $L_p$, etc, dropping the dependence with respect to the domain $(\Omega,\mu)$.

Ultrasymmetric spaces are interpolation spaces for the couple $(L_{1}, L_{\infty})$. Indeed, 
Peetre's well-known formula \cite{Bennett-Sharpley,Peetre}
$$K(t,f;L_1,L_\infty)=\int_0^tf^*(s)\, ds= tf^{**}(t),\quad t>0,$$
and the equivalence $\|t^{1-\theta}\b(t)f^{**}(t)\|_{\widetilde{E}}\sim \|t^{1-\theta}\b(t)f^*(t)\|_{\widetilde{E}}$ for  $0<\theta<1$   (see, e.g. \cite[Lemma 2.16]{CwP}), yield   the equality
\begin{equation}\label{eU}
L_{p,\b,E}=(L_1,L_\infty)_{1-\frac1p,\b,E} 
\end{equation}
 for any r.i. space $E$, 
$\b\in SV(0,1)$ and $1<p<\infty$.

Grand and small Lebesgue spaces are limiting interpolation spaces for the couple $(L_{1}, L_{p})$ and  $(L_{p}, L_{\infty})$, respectively.  Moreover they can also be characterized as $\mathcal{R}$ and  $\mathcal{L}$ spaces, respectively. In fact, it is easy to observe from Definition \ref{def_gLp} that 
$$L^{p),\alpha}=(L_1,L_p)_{1,\ell^{-\frac\alpha{p}}(u),L_\infty}.$$
Then, using the reiteration formula  (4.14) from \cite{GOT} or  \cite[Th. 6.12]{FMS-1}, they can characterized as ${\mathcal R}$ spaces
\begin{equation}\label{eGL}
L^{p),\alpha}=(L_1,(L_1,L_\infty)_{1-\frac1{p},1,L_p})_{1,\ell^{-\frac\alpha{p}}(u),L_\infty}=(L_1,L_\infty)^{\mathcal R}_{1-\frac1p,\ell^{-\frac\alpha{p}}(u),L_\infty,1,L_p}.
\end{equation}
Similarly, using the reiteration formula (3.21) from \cite{GOT}  or \cite[Th. 6.11]{FMS-1}, the small Lebesgue spaces can be seen as  ${\mathcal L}$ spaces
 \begin{align}
 L^{(p,\alpha}&=(L_p,L_\infty)_{0,\ell^{\frac\alpha{p'}-1}(u),L_1}=((L_1,L_\infty)_{1-\frac1{p},1,L_p},L_\infty)_{0,\ell^{\frac\alpha{p'}-1}(u),L_1}\nonumber\\
 &=(L_1,L_\infty)^{\mathcal L}_{1-\frac1p,\ell^{\frac\alpha{p'}-1}(u),L_1,1,L_p}.\label{eSL}
 \end{align}

Since in Corollary 5.3 from \cite{FMS-RL1} we interpolate the grand Lebesgue spaces with the ultrasymmetric spaces included in them, now Theorem \ref{thm5.9} allows us to  obtain the ``dual''  situation.  In other to do that we need some previous considerations.  First of all, if $1<p_0<p_1<\infty$ and $\alpha>0$ then  $(L_{p_0,q;b_0}, L^{p_1),\alpha} )$ is an ordered couple. Indeed, $$L^{p_1),\alpha}\hookrightarrow L^{p_1,\infty}(\log L)^{-\frac{\alpha}{p_1}}\hookrightarrow L_{p_0,q;b_0}.$$
We will also need the following technical lemma.
\begin{lem}\cite[Lemma 6.1]{EOP}
If $\sigma+\frac{1}{q}<0$  with $1\leq q<\infty$ or $q=\infty$ and $\sigma\leq 0$ then 
\begin{equation}\label{einfty1}
\|\ell^{\sigma}(t)\|_{\widetilde{L}_{q}(0,u)}\sim \ell^{\sigma+\frac1{q}}(u), \quad u\in(0,1).
\end{equation}
If $\sigma+\frac{1}{q}>0$  with $1\leq q<\infty$, or $q=\infty$ and $\sigma\geq 0$, then
 \begin{equation}\label{einfty2}
\|\ell^{\sigma}(t)\|_{\widetilde{L}_{q}(u,1)}\sim \ell^{\sigma+\frac1{q}}(u),\quad u\in(0,1/2).\end{equation}
\end{lem}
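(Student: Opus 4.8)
The plan is to reduce both equivalences to elementary one–variable estimates via the substitution $s=|\log t|$. Since $u<1$, on the entire interval of integration one has $t<1$, hence $|\log t|=\log(1/t)>0$; putting $s=\log(1/t)$ gives $\tfrac{dt}{t}=-\,ds$ and $\ell(t)=1+s$, and with $v:=|\log u|=\log(1/u)$ we have $\ell(u)=1+v$. This transforms, for $1\le q<\infty$,
\begin{equation*}
(\|\ell^\sigma\|_{\widetilde{L}_q(0,u)})^q=\int_v^\infty(1+s)^{\sigma q}\,ds\mand(\|\ell^\sigma\|_{\widetilde{L}_q(u,1)})^q=\int_0^v(1+s)^{\sigma q}\,ds,
\end{equation*}
while for $q=\infty$ the corresponding quantities become $\sup_{s>v}(1+s)^\sigma$ and $\sup_{0<s<v}(1+s)^\sigma$.

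For \eqref{einfty1} with $1\le q<\infty$: the integral $\int_v^\infty(1+s)^{\sigma q}\,ds$ converges precisely when $\sigma q<-1$, i.e. $\sigma+\tfrac1q<0$, and then equals $|\sigma q+1|^{-1}(1+v)^{\sigma q+1}$; taking $q$-th roots gives $\|\ell^\sigma\|_{\widetilde{L}_q(0,u)}\sim(1+v)^{\sigma+1/q}=\ell^{\sigma+1/q}(u)$, with constants depending only on $\sigma$ and $q$. For $q=\infty$ and $\sigma\le0$ the map $s\mapsto(1+s)^\sigma$ is non-increasing, so its supremum over $s>v$ equals $(1+v)^\sigma=\ell^\sigma(u)$, which is $\ell^{\sigma+1/q}(u)$ since $1/q=0$; the borderline case $\sigma=0$ is trivial because $\ell^0\equiv1$.

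Turning to \eqref{einfty2} with $1\le q<\infty$: the integral $\int_0^v(1+s)^{\sigma q}\,ds$ is always finite, and when $\sigma q+1>0$, i.e. $\sigma+\tfrac1q>0$, it equals $(\sigma q+1)^{-1}\big((1+v)^{\sigma q+1}-1\big)$. Here I would invoke the restriction $u\in(0,1/2)$, which forces $v=\log(1/u)>\log 2$, hence $\ell(u)=1+v>1+\log 2>1$; since $\sigma q+1>0$ this gives $(1+v)^{\sigma q+1}\ge(1+\log 2)^{\sigma q+1}=:c_0>1$, so that $(1+v)^{\sigma q+1}-1$ is comparable to $(1+v)^{\sigma q+1}$ (trivially from above, and from below since $(1+v)^{\sigma q+1}-1\ge(1-c_0^{-1})(1+v)^{\sigma q+1}$), with constants depending only on $\sigma$ and $q$. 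Taking $q$-th roots yields $\|\ell^\sigma\|_{\widetilde{L}_q(u,1)}\sim\ell^{\sigma+1/q}(u)$ uniformly on $(0,1/2)$. For $q=\infty$ and $\sigma\ge0$ the map $s\mapsto(1+s)^\sigma$ is non-decreasing, so its supremum over $0<s<v$ is $(1+v)^\sigma=\ell^\sigma(u)=\ell^{\sigma+1/q}(u)$, with $\sigma=0$ again immediate.

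The computation is routine; the only genuine point to watch is uniformity of the constants. In \eqref{einfty2} the comparison $(1+v)^{\sigma q+1}-1\sim(1+v)^{\sigma q+1}$ breaks down as $u\to1^-$, where $v\to0^+$ and the left side vanishes while $\ell^{\sigma+1/q}(u)\to1$; this is exactly why one restricts to $u\in(0,1/2)$. Similarly, in the $q=\infty$ cases the sign hypotheses on $\sigma$ are precisely what place the extremum of $(1+s)^\sigma$ at the endpoint $s=v$ rather than at the opposite end of the interval. With these observations the stated equivalences follow.
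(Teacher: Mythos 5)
Your proof is correct. Note that the paper does not prove this lemma at all; it is stated as a citation to \cite[Lemma~6.1]{EOP}, so there is no in-paper argument to compare against. Your computation via the substitution $s=|\log t|$ is the natural elementary route, and you handle the details carefully: the reduction of $\widetilde{L}_q$ to ordinary one-variable integrals/suprema, the convergence criterion $\sigma q+1<0$ matching $\sigma+\tfrac1q<0$, the role of the sign hypothesis on $\sigma$ in pinning the supremum at the endpoint when $q=\infty$, and especially the observation that the restriction $u\in(0,1/2)$ is exactly what makes $(1+v)^{\sigma q+1}-1$ uniformly comparable to $(1+v)^{\sigma q+1}$ (the constant $c_0=(1+\log 2)^{\sigma q+1}>1$ keeps the lower bound from degenerating as $u\to 1^-$). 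The argument is complete as written.
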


\vspace{2mm}
Notice also that if $\b(t)\sim\a(t)$ for all $t\in(0,1/2)$, then the monotonicity properties of the $K$-functional and the properties of the slowly varying functions imply that
$$\overline{X}_{\theta,\b,E}=\overline{X}_{\theta,\a,E}.$$
Thus, for any $0<\theta<1$ and any r.i. space $E$,
\begin{equation}\label{equiv}
\overline{X}_{\theta,\|\ell^{\sigma}(t)\|_{\widetilde{L}_{q}(u,1)},E}=\overline{X}_{\theta, \ell^{\sigma+\frac1{q}}(u),E}.
\end{equation}

\begin{cor}\label{713}
Let $E,\ E_0$ be r.i. spaces, $\b\in SV(0,1)$, $1< p_0<p_1<\infty$ and $\beta>0$. Consider the function $\rho(u)=u^{\frac{1}{p_0}-\frac{1}{p_1}}\b_0(u)\ell^{\frac{\beta}{p_1}}(u)$, $u \in (0,1)$.
\begin{itemize}

\vspace{1mm}
\item[a)] If $0<\theta< 1$, then
\begin{equation}\label{e1713}
\big(L_{p_0,\b_0,E_0},L^{p_1),\beta}\big)_{\theta,\b,E}=L_{p,\B_\theta,E},
\end{equation}
where $\frac1{p}=\frac{1-\theta}{p_0}+\frac{\theta}{p_1}$ and $\B_\theta(u)=\b_0^{1-\theta}(u)\ell^{\frac{-\beta\theta}{p_1}}(u)\b(\rho(u))$, $u \in(0,1)$.

\vspace{1mm}
\item[b)] If $\theta=0$, then
\begin{equation}\label{e1714}
\big(L_{p_0,\b_0,E_0},L^{p_1),\beta}\big)_{0,\b,E}=(L_1,L_\infty)^{\mathcal L}_{1-\frac{1}{p_0},\b\circ\rho,E,\b_0,E_0}.
\end{equation}

\vspace{1mm}
\item[c)] If $\theta=1$ and $\|\b\|_{\widetilde{E}(0,1)}<\infty$, then
\begin{align}
\big(L_{p_0,\b_0,E_0},&L^{p_1),\beta}\big)_{1,\b,E}&\label{e1715}\\
&=(L_1,L_\infty)^{\mathcal R}_{1-\frac{1}{p_1},\B_1,E,1,L_{p_1}}\cap(L_1,L_\infty)^{\mathcal R,\mathcal R}_{1-\frac{1}{p_1},\b\circ\rho,E,\ell^{-\frac{\beta}{p_1}}(u),L_\infty,1,L_{p_1}}\nonumber
\end{align}
where $\B_1(u)=\ell^{\frac{-\beta}{p_1}}(u)\b(\rho(u))$, $u \in(0,1)$.

\end{itemize}
\end{cor}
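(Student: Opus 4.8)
The plan is to realize $\big(L_{p_0,\b_0,E_0},L^{p_1),\beta}\big)$ as one of the couples covered by Theorem \ref{thm5.9}, applied to the base couple $\overline{X}=(L_1,L_\infty)$ on $(\Omega,\mu)$, and then to translate the abstract conclusion into concrete spaces. By \eqref{eU} one has $L_{p_0,\b_0,E_0}=(L_1,L_\infty)_{1-1/p_0,\b_0,E_0}=\overline{X}_{\theta_0,\b_0,E_0}$ with $\theta_0=1-1/p_0$, and by \eqref{eGL} one has $L^{p_1),\beta}=(L_1,L_\infty)^{\mathcal R}_{1-1/p_1,\ell^{-\beta/p_1}(u),L_\infty,1,L_{p_1}}=\overline{X}^{\mathcal R}_{\theta_1,\b_1,E_1,\a,F}$ with $\theta_1=1-1/p_1$, $\b_1=\ell^{-\beta/p_1}$, $E_1=L_\infty$, $\a\equiv1$, $F=L_{p_1}$. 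Since $1<p_0<p_1<\infty$ we have $0<\theta_0<\theta_1<1$; the couple is ordered (as in the discussion preceding \eqref{einfty1}), so we are in the ordered-couple setting of \S\ref{aordered}, in which the only remaining hypothesis of Theorem \ref{thm5.9} is $\|\b_1\|_{\widetilde{E}_1(0,1)}=\|\ell^{-\beta/p_1}\|_{L_\infty(0,1)}<\infty$, which holds since $\ell^{-\beta/p_1}\le1$ on $(0,1)$.

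Next I would identify the parameters appearing in Theorem \ref{thm5.9}. Since $\a\equiv1$ and, by \eqref{einfty1} with $\sigma=-\beta/p_1<0$ and $q=\infty$, $\|\b_1\|_{\widetilde{E}_1(0,u)}=\|\ell^{-\beta/p_1}(t)\|_{\widetilde{L}_\infty(0,u)}\sim\ell^{-\beta/p_1}(u)$ on $(0,1)$, one obtains
$$\rho(u)=u^{\theta_1-\theta_0}\frac{\b_0(u)}{\a(u)\|\b_1\|_{\widetilde{E}_1(0,u)}}\sim u^{1/p_0-1/p_1}\b_0(u)\ell^{\beta/p_1}(u),$$
which is the function $\rho$ in the statement. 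Replacing $\rho$ and $\|\b_1\|_{\widetilde{E}_1(0,u)}$ by these equivalent expressions does not affect any of the spaces in play, since equivalent slowly varying weights (and, by Remark \ref{rem23}, equivalent inner functions in $\b\circ\rho$) produce the same $\overline{X}_{\theta,\cdot,E}$, $\overline{X}^{\mathcal L}_{\theta,\cdot}$, $\overline{X}^{\mathcal R}_{\theta,\cdot}$ and $\overline{X}^{\mathcal R,\mathcal R}_{\theta,\cdot}$.

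It then remains to copy out each case of Theorem \ref{thm5.9}. In case (a), $0<\theta<1$, part (a) of that theorem gives $\overline{X}_{\tilde{\theta},\B_\theta,E}$ with $\tilde{\theta}=(1-\theta)(1-1/p_0)+\theta(1-1/p_1)=1-1/p$ (where $1/p=(1-\theta)/p_0+\theta/p_1$) and $\B_\theta(u)=\b_0^{1-\theta}(u)\big(\ell^{-\beta/p_1}(u)\big)^{\theta}\b(\rho(u))=\b_0^{1-\theta}(u)\ell^{-\beta\theta/p_1}(u)\b(\rho(u))$; a further application of \eqref{eU} identifies $\overline{X}_{1-1/p,\B_\theta,E}$ with $L_{p,\B_\theta,E}$, which is \eqref{e1713}. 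In case (b), $\theta=0$, part (b) (the condition on $(1,\infty)$ being vacuous for ordered couples) gives $\overline{X}^{\mathcal L}_{\theta_0,\b\circ\rho,E,\b_0,E_0}=(L_1,L_\infty)^{\mathcal L}_{1-1/p_0,\b\circ\rho,E,\b_0,E_0}$, that is, \eqref{e1714}. In case (c), $\theta=1$ with $\|\b\|_{\widetilde{E}(0,1)}<\infty$, part (c) gives $\overline{X}^{\mathcal R}_{\theta_1,\B_1,E,\a,F}\cap\overline{X}^{\mathcal R,\mathcal R}_{\theta_1,\b\circ\rho,E,\b_1,E_1,\a,F}$ with $\B_1(u)=\|\b_1\|_{\widetilde{E}_1(0,u)}\b(\rho(u))\sim\ell^{-\beta/p_1}(u)\b(\rho(u))$, and substituting $\theta_1=1-1/p_1$, $\b_1=\ell^{-\beta/p_1}$, $E_1=L_\infty$, $\a\equiv1$, $F=L_{p_1}$ yields exactly \eqref{e1715}.

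The argument is mechanical once the dictionary via \eqref{eU} and \eqref{eGL} is set up; the only two points requiring attention are the verification that $\big(L_{p_0,\b_0,E_0},L^{p_1),\beta}\big)$ is an ordered compatible couple, so that the ordered-couple version of Theorem \ref{thm5.9} with $\overline{X}=(L_1,L_\infty)$ legitimately applies, and the evaluation $\|\ell^{-\beta/p_1}\|_{\widetilde{L}_\infty(0,u)}\sim\ell^{-\beta/p_1}(u)$ from \eqref{einfty1}, which is what produces the explicit $\ell^{\beta/p_1}$ factor in $\rho$. There is no genuine obstacle beyond this bookkeeping.
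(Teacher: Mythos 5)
Your proposal is correct and follows essentially the same route as the paper's proof: the dictionary via \eqref{eU} and \eqref{eGL} to place the couple in the setting of Theorem \ref{thm5.9}, the verification of the ordering and of $\|\b_1\|_{\widetilde{E}_1(0,1)}<\infty$, the simplification $\|\ell^{-\beta/p_1}\|_{\widetilde{L}_\infty(0,u)}\sim\ell^{-\beta/p_1}(u)$ from \eqref{einfty1}, and the final translation of $\overline{X}_{\tilde\theta,B_\theta,E}$ back to $L_{p,B_\theta,E}$ using \eqref{eU}. The paper presents this more tersely (writing out only the case $0<\theta<1$ and leaving the simplification of the norm expressions implicit), but your write-up is the same argument with the bookkeeping made explicit.
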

\begin{proof}
Let $0<\theta<1$. Using \eqref{eU}, \eqref{eGL} and Theorem \ref{thm5.9} a) we obtain that
\begin{align*}
\big(L_{p_0,\b_0,E_0},L^{p_1),\beta}\big)_{\theta,\b,E}&=\Big((L_1,L_\infty)_{1-\frac1{p_0},\b_0,E_0},(L_1,L_\infty)^{\mathcal R}_{1-\frac1{p_1},\ell^{-\frac\beta{p_1}}(u),L_\infty,1,L_{p_1}}\Big)_{\theta,\b,E}\\
&=(L_1,L_\infty)_{\widetilde{\theta},B_\theta,E}
\end{align*}
where $\widetilde{\theta}=1-\Big(\frac{1-\theta}{p_0}+\frac{\theta}{p_1}\Big)$ and
$$B_\theta(u)=\b_0^{1-\theta}(u)\|\ell^{-\frac\beta{p_1}}(t)\|^{\theta}_{\widetilde{L}_\infty(0,u)}
\b\bigg(u^{\frac{1}{p_0}-\frac{1}{p_1}}\frac{\b_0(u)}{\|\ell^{-\beta/p_1}(t)\|_{\widetilde{L}_\infty(0,u)}}\bigg),\quad u\in(0,1).$$
We may apply \eqref{eU} to obtain \eqref{e1713}. The proofs of the cases $\theta=0, 1$ can be done similarly.
\end{proof}

In particular, the choice in  \eqref{e1714} of $q=p_0$, $E=L_1$, $E_0=L_{p_0}$ and  functions $\b_0\equiv1$, $\b(u)=\ell^{\frac{\beta}{p_0'}-1}(u)$, $u\in (0,1)$, gives the following result.
\begin{cor}
Let $1<p_0<p_1\leq\infty$
\begin{equation}\label{e1716}
(L_{p_0},L^{p_1),\beta}\big)_{0,\ell^{\frac{\alpha}{p_0'}-1}(u),L_1}=L^{(p_0,\alpha}.
\end{equation}
\end{cor}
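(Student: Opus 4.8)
The plan is to read off \eqref{e1716} as the special case of Corollary \ref{713} b) corresponding to the parameters $E=L_1$, $E_0=L_{p_0}$, $\b_0\equiv1$ and interpolation function $\b(u)=\ell^{\frac{\alpha}{p_0'}-1}(u)$, $u\in(0,1)$. So the proof has three bookkeeping steps: identify the first space of the couple, compute the function $\rho$, and recognize the resulting $\mathcal{L}$-space. First, though, one should dispose of the endpoint $p_1=\infty$ separately: there $\ell^{-\beta/p_1}\equiv1$ and \eqref{lpa1} gives $L^{p_1),\beta}=L_\infty$, so \eqref{e1716} reduces to $\big(L_{p_0},L_\infty\big)_{0,\ell^{\frac{\alpha}{p_0'}-1}(u),L_1}=L^{(p_0,\alpha}$, which is the first identity in \eqref{eSL}. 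Hence assume $1<p_1<\infty$, which is the range covered by Corollary \ref{713}.

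With the above choices the left-hand space in \eqref{e1714} is $\big(L_{p_0,1,L_{p_0}},L^{p_1),\beta}\big)_{0,\ell^{\frac{\alpha}{p_0'}-1}(u),L_1}$, and a one-line computation with the measure $\tfrac{dt}{t}$ gives $\|f\|_{L_{p_0,1,L_{p_0}}}=\|t^{1/p_0}f^*(t)\|_{\widetilde{L}_{p_0}}=\big(\int_0^1 f^*(t)^{p_0}\,dt\big)^{1/p_0}=\|f\|_{L_{p_0}}$, i.e. $L_{p_0,1,L_{p_0}}=L_{p_0}$; thus the left-hand side is exactly the one in \eqref{e1716}. (The hypotheses of Corollary \ref{713} b) hold: working with an ordered couple on $(0,1)$ the conditions on $(1,\infty)$ are void, and $\|\ell^{-\beta/p_1}\|_{\widetilde{L}_\infty(0,1)}\le1<\infty$.) For the right-hand side, with $\b_0\equiv1$ the function of Corollary \ref{713} is $\rho(u)=u^{\frac1{p_0}-\frac1{p_1}}\ell^{\frac{\beta}{p_1}}(u)$, and $\frac1{p_0}-\frac1{p_1}>0$. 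Since the logarithm of any slowly varying function is $o(|\log u|)$ as $u\to0^+$ (a consequence of Lemma \ref{lem0} (iii)), one has $\ell(\rho(u))\sim\ell(u)$ on $(0,1)$, hence $\b(\rho(u))=\ell^{\frac{\alpha}{p_0'}-1}(\rho(u))\sim\ell^{\frac{\alpha}{p_0'}-1}(u)$ — alternatively this follows from Remark \ref{rem23}. Since equivalent parameter functions give the same $\mathcal{L}$-space, the right-hand side of \eqref{e1714} becomes
$$(L_1,L_\infty)^{\mathcal{L}}_{1-\frac1{p_0},\,\b\circ\rho,\,L_1,\,1,\,L_{p_0}}=(L_1,L_\infty)^{\mathcal{L}}_{1-\frac1{p_0},\,\ell^{\frac{\alpha}{p_0'}-1}(u),\,L_1,\,1,\,L_{p_0}}=L^{(p_0,\alpha},$$
the last equality being \eqref{eSL}. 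Combining the two sides proves \eqref{e1716}.

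I do not expect a genuine obstacle here: the argument is a direct specialisation of Corollary \ref{713} b), and the only point needing a moment's care is the equivalence $\ell\circ\rho\sim\ell$ (equivalently, that composing a fixed power of $\ell$ with $\rho$ leaves the $\mathcal{L}$-space unchanged), which is a routine slowly varying estimate of the type already used repeatedly in the paper.
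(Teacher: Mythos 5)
Your proof is correct and follows the same route as the paper: specialise Corollary \ref{713} b) with $E=L_1$, $E_0=L_{p_0}$, $\b_0\equiv1$, $\b(u)=\ell^{\frac{\alpha}{p_0'}-1}(u)$, note $L_{p_0,1,L_{p_0}}=L_{p_0}$, observe that $\ell\circ\rho\sim\ell$ so that $\b\circ\rho\sim\b$, and identify the resulting $\mathcal{L}$-space with $L^{(p_0,\alpha}$ via \eqref{eSL}. The paper's own proof is just a one-line pointer to Corollary \ref{713} b), so you are supplying exactly the details the authors left implicit. One small thing you do beyond the paper: the corollary is stated for $1<p_0<p_1\leq\infty$, but Corollary \ref{713} only covers $p_1<\infty$; the paper's pointer therefore quietly omits the endpoint $p_1=\infty$. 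Your separate treatment of that case via $L^{\infty),\beta}=L_\infty$ and the first identity in \eqref{eSL} correctly closes that gap.
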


Now, we state the interpolation formulae for a couple formed by a grand and a small Lebesgue space. The result recovers Theorem 5.1 from \cite{FFGKR}, and completes it with the extreme cases $\theta=0,\,1$. Moreover, for $0<\theta<1$ this is a special case of \cite[Theorem 6.5]{AFH}.

\begin{thm}\label{thm58}
Let $1<p_0<p_1<\infty$, $1\leq r\leq \infty$ and $\alpha,\beta>0$. 

\vspace{1mm}
\begin{itemize}
\item[a)] If $0<\theta< 1$, then 
$$\big(L^{(p_0,\alpha},L^{p_1),\beta})_{\theta,r}= L^{p,r}(\log L)^{A},$$
where $\frac1p=\frac{1-\theta}{p_0}+\frac\theta{p_1}$ and $A=\frac{\alpha(1-\theta)}{p_0'}-\frac{\beta\theta}{p_1}$.

\vspace{1mm}
\item[b)] If $\theta=0$, then
\begin{align*}
\big(L^{(p_0,\alpha},&L^{p_1),\beta})_{0,r}\\
&=(L_1,L_\infty)^{\mathcal L}_{1-\frac{1}{p_0},\ell^{\frac{\alpha}{p'_0}}(u),L_r,1,L_{p_0}}\cap(L_{p_0},L^{p_1),\beta})^{\mathcal L}_{0,1,L_r,\ell^{\frac{\alpha}{p'_0}-1}(u),L_1}.\nonumber
\end{align*}

\vspace{1mm}
\item[c)] If $\theta=1$ and $\b\in SV(0,1)$ is such that $\|\b\|_{\widetilde{E}(0,1)}<\infty$, then
\begin{align*}
\big(L^{(p_0,\alpha},&L^{p_1),\beta})_{1,\b,E}\\&=
(L_1,L_\infty)^{\mathcal R}_{1-\frac{1}{p_1},\B_1,E,1,L_{p_1}}\cap(L_1,L_\infty)^{\mathcal R,\mathcal R}_{1-\frac{1}{p_1},\b\circ\rho,E,\ell^{-\frac{\beta}{p_1}}(u),L_\infty,1,L_{p_1}}
\end{align*}
where $\rho(u)=u^{\frac{1}{p_0}-\frac{1}{p_1}}\ell^{\frac{\alpha}{p'_0}+\frac{\beta}{p_1}}(u)$ and $\B_1(u)=\ell^{-\frac{\beta}{p_1}}(u)\b(\rho(u))$, $u\in(0,1)$.
 \end{itemize}
\end{thm}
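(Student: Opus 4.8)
The plan is to bring the couple $(L^{(p_0,\alpha},L^{p_1),\beta})$ into the scope of the reiteration theorems of \S\ref{sereiteration} by choosing a convenient base couple. The key observation is that $L^{(p_0,\alpha}$ is an intermediate space \emph{of class $0$} for the couple $(L_{p_0},L_\infty)$: the inclusion $L^{(p_0,\alpha}\hookrightarrow L_{p_0}$ gives $K(t,\cdot\,;L_{p_0},L_\infty)\lesssim\|\cdot\|_{L^{(p_0,\alpha}}$, while the small Lebesgue quasi-norm is controlled by the $L_\infty$-norm; hence $L^{(p_0,\alpha}$ behaves like $L_{p_0}$ for reiteration purposes. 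Using this and reiterating as in \eqref{eGL}, but through $(L_{p_0},L_\infty)$ and $L_{p_1}=(L_{p_0},L_\infty)_{1-p_0/p_1,1,L_{p_1}}$ instead of through $(L_1,L_\infty)$, I would represent $L^{p_1),\beta}$ as an $\mathcal R$-space over the couple $(L^{(p_0,\alpha},L_\infty)$, say $L^{p_1),\beta}=(L^{(p_0,\alpha},L_\infty)^{\mathcal R}_{\theta_1,\b_1,L_\infty,1,L_{p_1}}$ with $\theta_1=1-p_0/p_1\in(0,1)$ and $\b_1$ an explicit power of $\ell$ (the weight $\ell^{-\beta/p_1}$ composed with a power times a slowly varying function, which by Lemma \ref{lem0} and Remark \ref{rem23} is again equivalent to a power of $\ell$). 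The point is that now the couple is exactly of the form $(X_0,\overline X^{\mathcal R}_{\theta_1,\b_1,E_1,\a,F})$ for the \emph{base} couple $\overline X=(L^{(p_0,\alpha},L_\infty)$, with $\theta_1$ strictly inside $(0,1)$, so Theorem \ref{thm511} applies directly with no excluded-index difficulty. Alternatively one may use the couple $(L_{p_0},L^{p_1),\beta})$ together with $L^{(p_0,\alpha}=(L_{p_0},L^{p_1),\beta})_{0,\ell^{\alpha/p_0'-1},L_1}$, which is itself a consequence of Corollary \ref{713}b) and Remark \ref{rem23}.

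Applying Theorem \ref{thm511} then reduces everything to identifying the spaces $(L^{(p_0,\alpha},L_\infty)_{\tilde\theta,B_\theta,L_r}$ (for $0\le\theta<1$) and, for $\theta=1$, an intersection of an $\mathcal R$-space with an $\mathcal R,\mathcal R$-space over $(L^{(p_0,\alpha},L_\infty)$, where $\tilde\theta=\theta\theta_1$ and $B_\theta$ is the slowly varying function furnished by the theorem. For $0<\theta<1$ one has $\tilde\theta\in(0,1)$, and since $L^{(p_0,\alpha}$ is of class $0$ for $(L_{p_0},L_\infty)$, the simple real method $(L^{(p_0,\alpha},L_\infty)_{\tilde\theta,B_\theta,L_r}$ coincides with $(L_{p_0},L_\infty)_{\tilde\theta,B_\theta',L_r}$ (reiteration of a class-$0$ space), which by \eqref{eU} and a further reiteration equals the ultrasymmetric space $(L_1,L_\infty)_{\tilde\theta',\ell^{A},L_r}=L^{p,r}(\log L)^A$; the index $1/p=(1-\theta)/p_0+\theta/p_1$ and the exponent $A=\alpha(1-\theta)/p_0'-\beta\theta/p_1$ drop out of the routine slowly varying computation using Lemma \ref{lem1}(i) and the logarithmic identities \eqref{einfty1}--\eqref{einfty2}. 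This reproves the case $0<\theta<1$, which is also contained in \cite[Theorem 6.5]{AFH}.

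The extreme cases $\theta=0$ and $\theta=1$ require the corresponding limiting reiteration identities. For $\theta=1$, Theorem \ref{thm511}b) already gives the answer as an intersection over $(L^{(p_0,\alpha},L_\infty)$, which I would then transport to $(L_1,L_\infty)$ by using \eqref{eGL} once more (now inside the $\mathcal R,\mathcal R$ construction) together with the equivalences \eqref{equiv}, obtaining the stated space $(L_1,L_\infty)^{\mathcal R}_{1-1/p_1,\B_1,E,1,L_{p_1}}\cap(L_1,L_\infty)^{\mathcal R,\mathcal R}_{1-1/p_1,\b\circ\rho,E,\ell^{-\beta/p_1},L_\infty,1,L_{p_1}}$. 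For $\theta=0$, Theorem \ref{thm511}a) only delivers $(L^{(p_0,\alpha},L_\infty)_{0,1,L_r}$, the $\theta=0$ endpoint of a couple whose left end is \emph{itself} a limiting space; to unfold this into the intersection of $(L_1,L_\infty)^{\mathcal L}_{1-1/p_0,\ell^{\alpha/p_0'},L_r,1,L_{p_0}}$ with an $\mathcal L$-space over the couple $(L_{p_0},L^{p_1),\beta})$ I would use the identity $L^{(p_0,\alpha}=(L_{p_0},L^{p_1),\beta})_{0,\ell^{\alpha/p_0'-1},L_1}$ and the reiteration of such doubly-limiting constructions, which is exactly the extra input taken from \cite{FMS-2}; the first factor is the ``class-$0$ part reiterated down to $(L_1,L_\infty)$'' and the second is the genuinely doubly-limiting part that still sees $L^{p_1),\beta}$.

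The main obstacle I expect is this last ingredient: by construction the cases $\theta=0,1$ land precisely at an endpoint index of the auxiliary couple, where the theorems of \S\ref{sereiteration} do not apply, so one must invoke the companion limiting reiteration results of \cite{FMS-2} and then carry out the somewhat delicate bookkeeping that matches the resulting $\mathcal L,\mathcal L$ and $\mathcal R,\mathcal R$ spaces with the explicit ones in the statement, tracking how each slowly varying parameter is composed with the successive functions $\rho$ and simplified via Lemma \ref{lem1}(i), \eqref{einfty1}--\eqref{einfty2} and Remark \ref{rem23}.
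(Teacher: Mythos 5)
Your primary route rests on an assertion that is false: $L^{(p_0,\alpha}$ is \emph{not} of class $0$ for the couple $(L_{p_0},L_\infty)$. Class $0$ requires both $\mathcal{C}_K(0)$ and $\mathcal{C}_J(0)$; the second condition gives $\|f\|_{L^{(p_0,\alpha}}\lesssim J(t,f;L_{p_0},L_\infty)$ for all $t>0$, and since $J(t,f)\to\|f\|_{L_{p_0}}$ as $t\to0^+$ on $L_{p_0}\cap L_\infty=L_\infty$, this would force $L_{p_0}\hookrightarrow L^{(p_0,\alpha}$. But this embedding fails: since $\alpha>0$ the weight $\ell^{\alpha/p_0'-1}$ is not $\widetilde L_1$-integrable near $0$, so $L^{(p_0,\alpha}$ is a \emph{proper} subspace of $L_{p_0}$. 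The bound by the $L_\infty$-norm that you invoke is irrelevant to $\mathcal{C}_J(0)$. Consequently the representation $L^{p_1),\beta}=(L^{(p_0,\alpha},L_\infty)^{\mathcal R}_{\theta_1,\b_1,L_\infty,1,L_{p_1}}$ and the passage $(L^{(p_0,\alpha},L_\infty)_{\tilde\theta,B_\theta,L_r}=(L_{p_0},L_\infty)_{\tilde\theta,B_\theta',L_r}$ that you deduce from the class-$0$ claim have no justification: the $K$-functional of $(L^{(p_0,\alpha},L_\infty)$ genuinely differs from that of $(L_{p_0},L_\infty)$ by logarithmic corrections, which is precisely why the small Lebesgue space is a limiting construction and not a simple real interpolation space. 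This undermines your treatment of parts a) and c) as presented.

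The alternative you mention in passing is the correct one, and it is the paper's actual proof for \emph{all three} parts, not just $\theta=0$: rewrite $L^{(p_0,\alpha}=(L_{p_0},L^{p_1),\beta})_{0,\ell^{\alpha/p_0'-1}(u),L_1}$ (a consequence of \eqref{e1714} with $\b_0\equiv1$, $E_0=L_{p_0}$ and Remark \ref{rem23}), then apply the limiting reiteration theorems of \cite{FMS-2} (Theorem 5.12 for $0<\theta\le1$, Theorem 5.13 for $\theta=0$) together with \eqref{einfty2} to reduce to $(L_{p_0},L^{p_1),\beta})_{\theta,\hat\b,L_r}$ for an explicit $\hat\b$, and finally invoke Corollary \ref{713} a), b), c) (i.e.\ \eqref{e1713}, \eqref{e1714}, \eqref{e1715}) to reach the stated identities. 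Note in particular that for $\theta=1$ the paper does not reiterate over $(L^{(p_0,\alpha},L_\infty)$ at all; it reiterates over $(L_{p_0},L^{p_1),\beta})$ and reads off the intersection $(L_1,L_\infty)^{\mathcal R}_{1-1/p_1,\B_1,E,1,L_{p_1}}\cap(L_1,L_\infty)^{\mathcal R,\mathcal R}_{1-1/p_1,\b\circ\rho,E,\ell^{-\beta/p_1}(u),L_\infty,1,L_{p_1}}$ directly from \eqref{e1715}. You should discard the class-$0$ strategy entirely and carry out the \cite{FMS-2} route uniformly; the ``bookkeeping'' you anticipate in the final paragraph is then exactly the routine slowly-varying computation via Lemma \ref{lem1}(i), \eqref{einfty1}--\eqref{einfty2} and Remark \ref{rem23}.
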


\begin{proof}
Let $0<\theta<1$.  Applying equality \eqref{e1716}, Theorem 5.12 from \cite{FMS-2} and \eqref{einfty2} we obtain the identity
\begin{align*}
\big(L^{(p_0,\alpha},L^{p_1),\beta}\big)_{\theta,r}&=\big((L_{p_0},L^{p_1),\beta})_{0,\ell^{\frac{\alpha}{p_0'}-1}(u),L_1},L^{p_1),\beta}\big)_{\theta,r}\\
&=\big(L_{p_0},L^{p_1),\beta})_{\theta,\ell^{\frac{\alpha(1-\theta)}{p_0'}}(u),L_r}.
\end{align*}
Now use \eqref{e1713} to establish a).
The limiting case $\theta=1$ follows from \eqref{e1715}. 

Finally, assume that $\theta=0$. Then, \eqref{e1716} together with Theorem 5.13 from \cite{FMS-2} and \eqref{einfty2} establish that
\begin{align*}
\big(L^{(p_0,\alpha},L^{p_1),\beta})_{0,r}
&=\big((L_{p_0},L^{p_1),\beta})_{0,\ell^{\frac{\alpha}{p_0'}-1}(u),L_1},L^{p_1),\beta}\big)_{0,r}\\
&=\big(L_{p_0},L^{p_1),\beta})_{0,\ell^{\frac{\alpha}{p'_0}}(u),L_r}\cap\big(L_{p_0},L^{p_1),\beta})^{\mathcal L}_{0,1,L_r,\ell^{\frac{\alpha}{p'_0}-1}(u),L_1}.
\end{align*}
Now, applying \eqref{e1714} we have that
$$\big(L^{(p_0,\alpha},L^{p_1),\beta})_{0,r}=(L_1,L_\infty)^{\mathcal L}_{1-\frac{1}{p_0},\ell^{\frac{\alpha}{p'_0}}(u),L_r,1,L_{p_0}}\cap(L^{(p_0,\alpha},L^{p_1),\beta})^{\mathcal L}_{0,1,L_r,\ell^{\frac{\alpha}{p'_0}-1}(u),L_1}.$$
\end{proof}

\vspace{1mm}
Now, we identify the spaces $\big(L\log L,L^{(p_1,\beta}\big)_{\theta,\b,E}$ and $\big(L_1,L^{(p_1,\beta}\big)_{\theta,\b,E}$ where $1<p_1<\infty$, $\beta>0$, for all possible values of $\theta\in[0,1]$. Remember that
\begin{equation}\label{elog}
(L_1,L_\infty)_{0,1,L_1}=L\log L
\end{equation}
and $L^{(p_1,\beta}\hookrightarrow L\log L\hookrightarrow L_1$ which makes $(L\log L, L^{p_1),\beta})$ and $(L_1,L^{p_1),\beta})$ ordered couples. 

\begin{cor}\label{714}
Let $E$ be an r.i. space, $\b\in SV(0,1)$, $1<p_1<\infty$ and $\beta>0$. Consider the function $\rho(u)=u^{1-\frac{1}{p_1}}\ell^{1+\frac{\beta}{p_1}}(u)$, $u\in(0,1)$.
\begin{itemize}
\item[a)] If $0<\theta< 1$, then
\begin{equation}\label{e1719}
\big(L\log L,L^{p_1),\beta}\big)_{\theta,\b,E}=L_{p,B_\theta,E}
\end{equation}
where $\frac1{p}=1-\theta+\frac{\theta}{p_1}$ and 
$\B_{\theta}(u)=\ell^{1-\theta-\frac{\beta\theta}{p_1}}(u)\b(\rho(u))$, $u\in(0,1)$.

\vspace{1mm}
\item[b)] If $\theta=0$, then
$$\big(L\log L,L^{p_1),\beta}\big)_{0,\b,E}=(L_1,L_\infty)_{0,B_0,E}\cap(L_1,L_\infty)^{\mathcal L}_{0,\b\circ\rho,E,1,L_1}$$
where $B_0(u)=\ell(u)\b(\rho(u))$, $u\in(0,1)$.

\vspace{1mm}
\item[c)] If $\theta=1$ and $\|\b\|_{\widetilde{E}(0,1)}<\infty$, then
$$\big(L\log L,L^{p_1),\beta}\big)_{1,\b,E}=(L_1,L_\infty)^{\mathcal{R}}_{1-\frac{1}{p_1},B_1,E,1,L_{p_1}}\cap(L_1,L_\infty)^{\mathcal R,\mathcal{R}}_{1-\frac{1}{p_1},\b\circ\rho,E,\ell^{-\frac{\beta}{p_1}},1,L_{p_1}}$$
where $B_1(u)=\ell^{-\frac{\beta\theta}{p_1}}(u)\b(\rho(u))$, $u\in(0,1)$.
\end{itemize}
\end{cor}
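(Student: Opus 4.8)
The plan is to realise the couple $(L\log L,L^{p_1),\beta})$ as a pair of the type handled by Theorem \ref{thm5.10} and then to translate the abstract output back into concrete function spaces. First I would record, using \eqref{elog} and \eqref{eGL}, that
$$L\log L=(L_1,L_\infty)_{0,1,L_1}\mand L^{p_1),\beta}=(L_1,L_\infty)^{\mathcal R}_{1-\frac1{p_1},\ell^{-\beta/p_1}(u),L_\infty,1,L_{p_1}},$$
so that we are in the setting of Theorem \ref{thm5.10} for the ordered couple $\overline X=(L_1,L_\infty)$ (on a space of measure $1$, so all r.i.\ spaces and slowly varying functions live on $(0,1)$, cf.\ \S\ref{aordered}), with $\theta_0=0$, $\b_0\equiv1$, $E_0=L_1$, and $\theta_1=1-\frac1{p_1}$, $\b_1=\ell^{-\beta/p_1}(u)$, $E_1=L_\infty$, $\a\equiv1$, $F=L_{p_1}$. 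Since $1<p_1<\infty$ we have $0=\theta_0<\theta_1<1$, and $\|\b_1\|_{\widetilde L_\infty(0,1)}=\sup_{(0,1)}\ell^{-\beta/p_1}\le1<\infty$, so the hypotheses of Theorem \ref{thm5.10} are met (in the ordered setting the conditions on $(1,\infty)$ are dropped).

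Next I would compute the two slowly varying ingredients entering \eqref{rho3} and the weights. For the left endpoint $\|\b_0\|_{\widetilde{E}_0(u,1)}=\|1\|_{\widetilde L_1(u,1)}=\log(1/u)$, which is equivalent to $\ell(u)$ on $(0,1/2)$; by the remark preceding \eqref{equiv} replacing it by $\ell(u)$ does not change the resulting spaces. For the right endpoint \eqref{einfty1} gives $\|\b_1\|_{\widetilde{E}_1(0,u)}=\|\ell^{-\beta/p_1}\|_{\widetilde L_\infty(0,u)}\sim\ell^{-\beta/p_1}(u)$ on $(0,1)$. Substituting these into \eqref{rho3} yields exactly $\rho(u)=u^{1-1/p_1}\ell^{1+\beta/p_1}(u)$, and substituting into the formulas of Theorem \ref{thm5.10} gives $\tilde\theta=\theta(1-\tfrac1{p_1})$ together with $B_\theta(u)=\ell^{1-\theta-\beta\theta/p_1}(u)\,\b(\rho(u))$, $B_0(u)=\ell(u)\b(\rho(u))$, and $B_1(u)=\ell^{-\beta/p_1}(u)\b(\rho(u))$; moreover $\a\equiv1$, $\b_0\equiv1$, $E_0=L_1$ collapse the ${\mathcal L}$-parameters appearing in part b) and the $({\mathcal R},{\mathcal R})$-parameters in part c) to the ones written in the statement.

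It then remains to read off the three cases. For $0<\theta<1$, Theorem \ref{thm5.10} a) gives $(L\log L,L^{p_1),\beta})_{\theta,\b,E}=(L_1,L_\infty)_{\tilde\theta,B_\theta,E}$; since $0<\tilde\theta=\theta\theta_1<1$, \eqref{eU} applies with $1-\tfrac1p=\tilde\theta$, that is $\tfrac1p=1-\theta+\tfrac\theta{p_1}$, identifying this space with $L_{p,B_\theta,E}$, which is a). For $\theta=0$, Theorem \ref{thm5.10} b) gives directly $(L_1,L_\infty)_{0,B_0,E}\cap(L_1,L_\infty)^{\mathcal L}_{0,\b\circ\rho,E,1,L_1}$, which is b); and for $\theta=1$ with $\|\b\|_{\widetilde E(0,1)}<\infty$, Theorem \ref{thm5.10} c) gives $(L_1,L_\infty)^{\mathcal R}_{1-1/p_1,B_1,E,1,L_{p_1}}\cap(L_1,L_\infty)^{\mathcal R,\mathcal R}_{1-1/p_1,\b\circ\rho,E,\ell^{-\beta/p_1},L_\infty,1,L_{p_1}}$, which is c).

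The argument is essentially mechanical, so I do not expect a genuine obstacle. The only points requiring care are verifying that we are indeed in the regime $\theta_0=0<\theta_1<1$ so that Theorem \ref{thm5.10} (rather than a different limiting variant) applies, correctly specialising its parameters under the ordered-couple conventions of \S\ref{aordered}, and tracking the equivalences of the $\ell$-powers via \eqref{einfty1} and the remark around \eqref{equiv}; everything else is routine substitution.
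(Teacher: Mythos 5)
Your argument is correct and is essentially the paper's own proof: both rewrite $L\log L=(L_1,L_\infty)_{0,1,L_1}$ and $L^{p_1),\beta}=(L_1,L_\infty)^{\mathcal R}_{1-1/p_1,\ell^{-\beta/p_1},L_\infty,1,L_{p_1}}$ via \eqref{elog} and \eqref{eGL}, apply Theorem~\ref{thm5.10} with $\theta_0=0$, $\b_0\equiv1$, $E_0=L_1$, $\theta_1=1-\tfrac1{p_1}$, $\b_1=\ell^{-\beta/p_1}$, $E_1=L_\infty$, $\a\equiv1$, $F=L_{p_1}$, then simplify the resulting slowly varying weights with \eqref{einfty1}--\eqref{einfty2}, \eqref{equiv} and \eqref{eU}. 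Your version is, if anything, slightly more careful in tracking the ordered-couple conventions and in fact quietly corrects a typo in the statement of part~c) (supplying the missing $L_\infty$ in the $\mathcal R,\mathcal R$ space and reading the exponent $-\beta\theta/p_1$ as $-\beta/p_1$ since $\theta=1$).
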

\begin{proof}
We prove a). By equalities \eqref{eGL}, \eqref{elog} and Theorem \ref{thm5.10}, we have that
\begin{align*}
\big(L\log L,L^{p_1),\beta}\big)_{\theta,\b,E}&=\Big((L_1,L_\infty)_{0,1,L_1},(L_1,L_\infty)^{\mathcal R}_{1-\frac1{p_1},\ell^{-\frac\beta{p_1}}(u),L_\infty,1,L_{p_1}}\Big)_{\theta,\b,E}\\
&=(L_1,L_\infty)_{\tilde{\theta},B_\theta,E}
\end{align*}
where $\tilde{\theta}=\theta\big(1-\frac1{p_1}\big)$ and 
$$B_\theta(u)=\|1\|^{1-\theta}_{\widetilde{L}_1(u,1)}\|\ell^{-\frac{\beta}{p_1}}(t)\|^\theta_{\widetilde{L}_\infty(0,u)}\b\Big(u^{1-\frac{1}{p_1}}\frac{\|1\|_{\widetilde{L}_1(u,1)}}{\|\ell^{-\beta/p_1}(t)\|_{\widetilde{L}_\infty(0,u)}}\Big),\, u\in(0,1).$$
Besides, it follows from equivalences \eqref{einfty1} and \eqref{einfty2}  that
$$B_{\theta}(u) \sim\ell^{1-\theta-\frac{\beta\theta}{p_1}}(u)\b(u^{1-\frac{1}{p_1}}\ell^{1+\frac{\beta}{p_1}}(u)),\quad u\in(1,1/2).$$
Hence, \eqref{equiv} and \eqref{eU}  yield \eqref{e1719}. The remaining cases can be proved similarly.
\end{proof}

Moreover, Theorem \ref{thm511} enables us to  identify the space $\big(L_1,L^{(p_1,\beta}\big)_{\theta,\b,E}$, $0\leq \theta\leq 1$.

\begin{cor}
Let $E$ be an r.i. space, $\b\in SV(0,1)$, $1<p_1<\infty$ and $\beta>0$. Consider the function $\rho(u)=u^{1-\frac{1}{p_1}}\ell^{\frac{\beta}{p_1}}(u)$, $u\in(0,1)$.

\vspace{1mm}
\begin{itemize}
\item[a)] If $0\leq \theta< 1$, then
$$
\big(L_1,L^{p_1),\beta}\big)_{\theta,\b,E}=L_{p,B_\theta,E}
$$
where $\frac1{p}=1-\theta+\frac{\theta}{p_1}$ and 
$\B_{\theta}(u)=\ell^{-\frac{\beta\theta}{p_1}}(u)\b(\rho(u))$, $u\in(0,1)$.

\vspace{1mm}
\item[b)] If $\theta=1$ and $\|\b\|_{\widetilde{E}(0,1)}<\infty$, then
$$\big(L_1,L^{p_1),\beta}\big)_{1,\b,E}=(L_1,L_\infty)^{\mathcal{R}}_{1-\frac{1}{p_1},B_1,E,1,L_{p_1}}\cap(L_1,L_\infty)^{\mathcal R,\mathcal{R}}_{1-\frac{1}{p_1},\b\circ\rho,E,\ell^{-\frac{\beta}{p_1}}(u),1,L_{p_1}}$$
where $B_1(u)=\ell^{-\frac{\beta}{p_1}}(u)\b(\rho(u))$, $u\in(0,1)$.
\end{itemize}
\end{cor}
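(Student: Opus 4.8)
The plan is to derive this corollary from Theorem~\ref{thm511} by writing the couple $(L_1,L^{p_1),\beta})$ in the form $(X_0,\overline{X}^{\mathcal R}_{\theta_1,\b_1,E_1,\a,F})$. Recall from \eqref{eGL} that $L^{p_1),\beta}=(L_1,L_\infty)^{\mathcal R}_{1-\frac1{p_1},\ell^{-\frac\beta{p_1}}(u),L_\infty,1,L_{p_1}}$, while $L_1=X_0$ for the ordered couple $\overline{X}=(L_1,L_\infty)$. Thus we are exactly in the setting of Theorem~\ref{thm511} (in its ordered-couple version on $(0,1)$) with the choices $\theta_1=1-\frac1{p_1}\in(0,1)$, $\b_1(u)=\ell^{-\frac\beta{p_1}}(u)$, $E_1=L_\infty$, $\a\equiv1$ and $F=L_{p_1}$. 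The hypothesis $\|\b_1\|_{\widetilde{L}_\infty(0,1)}<\infty$ holds trivially since $\ell^{-\beta/p_1}(1)=1$, and all conditions involving the interval $(1,\infty)$ are vacuous in the ordered setting.

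Next I would compute the parameters produced by Theorem~\ref{thm511}. Since $\b_1=\ell^{-\beta/p_1}$ with exponent $-\beta/p_1\le0$, the case $q=\infty$ of \eqref{einfty1} gives $\|\b_1\|_{\widetilde{L}_\infty(0,u)}=\|\ell^{-\beta/p_1}(t)\|_{\widetilde{L}_\infty(0,u)}\sim\ell^{-\beta/p_1}(u)$ for $u\in(0,1)$. Consequently the weight $\rho$ of Theorem~\ref{thm511}, namely $\rho(u)=u^{\theta_1}/(\a(u)\|\b_1\|_{\widetilde{E}_1(0,u)})$, is equivalent to $u^{1-1/p_1}\ell^{\beta/p_1}(u)$, which is exactly the function $\rho$ in the statement; by Remark~\ref{rem23} this equivalence is preserved under composition with $\b\in SV$, so $\b\circ\rho$ is unambiguous. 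Moreover $\tilde\theta=\theta\theta_1=\theta\big(1-\tfrac1{p_1}\big)=1-\tfrac1p$ with $\tfrac1p=1-\theta+\tfrac\theta{p_1}$, and $B_\theta(u)=(\a(u)\|\b_1\|_{\widetilde{E}_1(0,u)})^\theta\b(\rho(u))\sim\ell^{-\beta\theta/p_1}(u)\b(\rho(u))=\B_\theta(u)$.

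For part a), Theorem~\ref{thm511}~a), which covers $0<\theta<1$ as well as $\theta=0$ (the condition on $(1,\infty)$ being void here), yields $(L_1,L^{p_1),\beta})_{\theta,\b,E}=\overline{X}_{\tilde\theta,B_\theta,E}=(L_1,L_\infty)_{1-\frac1p,B_\theta,E}$, and \eqref{eU} identifies the right-hand side with $L_{p,B_\theta,E}=L_{p,\B_\theta,E}$; the replacement of $B_\theta$ by the equivalent $\B_\theta$ is justified by the observation preceding \eqref{equiv} (equivalence of $SV$-weights near $0$). For part b), Theorem~\ref{thm511}~b) gives $(L_1,L^{p_1),\beta})_{1,\b,E}=\overline{X}^{\mathcal R}_{\theta_1,B_1,E,1,L_{p_1}}\cap\overline{X}^{\mathcal R,\mathcal R}_{\theta_1,\b\circ\rho,E,\b_1,E_1,1,L_{p_1}}$ with $B_1(u)=\|\b_1\|_{\widetilde{L}_\infty(0,u)}\b(\rho(u))\sim\ell^{-\beta/p_1}(u)\b(\rho(u))$; substituting $\theta_1=1-\frac1{p_1}$, $\b_1=\ell^{-\beta/p_1}$, $E_1=L_\infty$, $\overline{X}=(L_1,L_\infty)$, and using once more the equivalence of $SV$-weights, one obtains the displayed formula.

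The argument is routine once Theorem~\ref{thm511} is in hand; there is no genuine obstacle, the only points requiring a little care being the verification that the (ordered-couple) hypotheses of Theorem~\ref{thm511} are satisfied and the bookkeeping that turns the abstract weights $\rho$ and $B_\theta$ into their explicit logarithmic forms via \eqref{einfty1} and Remark~\ref{rem23}. One should also keep in mind that for $\theta=0$ the identity in a) is to be read as $L_{1,\B_0,E}=(L_1,L_\infty)_{0,\B_0,E}$, which is the meaning of $L_{1,\cdot,\cdot}$ here.
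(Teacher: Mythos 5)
Your proof is correct and follows exactly the route the paper intends: the paper itself introduces this corollary with the remark that Theorem~\ref{thm511} ``enables us to identify'' $(L_1,L^{p_1),\beta})_{\theta,\b,E}$, and your argument simply carries out the substitution $\overline{X}=(L_1,L_\infty)$, $\theta_1=1-\frac1{p_1}$, $\b_1=\ell^{-\beta/p_1}$, $E_1=L_\infty$, $\a\equiv 1$, $F=L_{p_1}$, together with the evaluation $\|\ell^{-\beta/p_1}\|_{\widetilde L_\infty(0,u)}\sim\ell^{-\beta/p_1}(u)$ from \eqref{einfty1}. The bookkeeping of $\rho$, $\tilde\theta$ and $B_\theta$, the use of the ordered-couple convention for $\theta=0$, and the careful reading of $L_{1,\cdot,\cdot}$ as the endpoint interpolation space $(L_1,L_\infty)_{0,\cdot,\cdot}$ are all handled correctly (note that the displayed $\mathcal R,\mathcal R$-index in the paper's statement appears to have dropped the intermediate space $E_1=L_\infty$; your version, which retains it, is the one consistent with Definition~\ref{defLRR} and Theorem~\ref{thm511}).
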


Our results also allow to interpolate couples formed by small Lebesgue spaces and ultrasymmetric spaces embedded onto them. Observe that  Proposition 5.6 of \cite{CF} and Lemma \ref{lemainclusion} establish that if $1<p_0<p_1<\infty$, $\alpha>0$ and $\beta>1$ then $$L_{p_1,\b_1,E_1}\hookrightarrow L^{p_0}(\log L)^{\frac{\beta\alpha}{p_0'-1}}\hookrightarrow L^{(p_0,\alpha}.$$ The proofs of the following results can be carried out using similar arguments to those of the previous corollaries, so we omit the details.

\begin{cor}
Let $E,\ E_1$ be r.i. spaces, $\b\in SV(0,1)$, $1<p_0<p_1<\infty$ and $\alpha>0$. Consider the function $\rho(u)=
u^{\frac{1}{p_0}-\frac1{p_1}}\ell^{\frac{\alpha}{p'_0}}(u)\b^{-1}_1(u)$, $u>0$. The following statements hold:
\begin{itemize}
\vspace{1mm}
\item[a)] If $0<\theta< 1$, then
$$\big(L^{(p_0,\alpha},L_{p_1,\b_1,E_1}\big)_{\theta,\b,E}=L_{p,\B_\theta,E},$$
where $\frac1{p}=\frac{1-\theta}{p_0}+\frac{\theta}{p_1}$ and $\B_\theta(u)=\ell^{\frac{\alpha(1-\theta)}{p'_0}}(u)\b^{\theta}_1(u)\b(\rho(u))$, $u\in(0,1)$.
\vspace{1mm}
\item[b)] If $\theta=0$, then
$$
\big(L^{(p_0,\alpha},L_{p_1,b_1,E_1}\big)_{0,\b,E}=(L_1,L_\infty)^{\mathcal L}_{1-\frac{1}{p_0},\B_0,E,1,L_{p_0}}\cap(L_1,L_\infty)^{\mathcal L,\mathcal L}_{1-\frac{1}{p_0},\b\circ\rho,E,\ell^{\frac{\alpha}{p'_0}-1}(u),L_1,1,L_{p_0}}
$$
where  $B_0(u)=\ell^{\frac{\alpha}{p'_0}}(u)\b(\rho(u))$, $u\in(0,1)$.
\vspace{1mm}
\item[c)] If $\theta=1$ and $\|\b\|_{\widetilde{E}(0,1)}<\infty$, then
$$\big(L^{(p_0,\alpha},L_{p_1,b_1,E_1}\big)_{1,\b,E}=(L_1,L_\infty)^{\mathcal R}_{1-\frac{1}{p_1},\b\circ\rho,E,\b_1,E_1}.$$
\end{itemize}
\end{cor}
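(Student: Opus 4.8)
The plan is to derive all three identities from the first theorem of Subsection~\ref{subL}, applied (in its ordered version) to the couple $(L_1,L_\infty)$ on $(\Omega,\mu)$. Since $L_{p_1,\b_1,E_1}\hookrightarrow L^{(p_0,\alpha}$, the couple $(L^{(p_0,\alpha},L_{p_1,\b_1,E_1})$ is ordered, so we work with r.i.\ spaces and slowly varying functions on $(0,1)$ and every hypothesis referring to $(1,\infty)$ is vacuous. By \eqref{eSL} and \eqref{eU},
$$L^{(p_0,\alpha}=(L_1,L_\infty)^{\mathcal L}_{1-\frac1{p_0},\ell^{\frac\alpha{p'_0}-1}(u),L_1,1,L_{p_0}}\mand L_{p_1,\b_1,E_1}=(L_1,L_\infty)_{1-\frac1{p_1},\b_1,E_1},$$
so that $(L^{(p_0,\alpha},L_{p_1,\b_1,E_1})_{\theta,\b,E}$ is exactly the reiteration space $(\overline{X}^{\mathcal L}_{\theta_0,\b_0,E_0,\a,F},\overline{X}_{\theta_1,\b_1,E_1})_{\theta,\b,E}$ for $\overline{X}=(L_1,L_\infty)$, $\theta_0=1-\frac1{p_0}$, $\b_0=\ell^{\frac\alpha{p'_0}-1}$, $E_0=L_1$, $\a\equiv1$, $F=L_{p_0}$ and $\theta_1=1-\frac1{p_1}$. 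Because $p_0<p_1$ one has $0<\theta_0<\theta_1<1$, which is precisely the range covered by that theorem.

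The next step is to identify the weights it produces. Its inner parameter is $u^{\theta_1-\theta_0}\a(u)\|\b_0\|_{\widetilde{E}_0(u,1)}/\b_1(u)$; since $\bigl(\tfrac\alpha{p'_0}-1\bigr)+1=\tfrac\alpha{p'_0}>0$, the equivalence \eqref{einfty2} gives $\|\ell^{\frac\alpha{p'_0}-1}\|_{\widetilde{L}_1(u,1)}\sim\ell^{\frac\alpha{p'_0}}(u)$ on $(0,1/2)$, hence this parameter coincides up to equivalence with $\rho(u)=u^{\frac1{p_0}-\frac1{p_1}}\ell^{\frac\alpha{p'_0}}(u)\b_1^{-1}(u)$, and by Remark~\ref{rem23} we may replace it by $\rho$ inside the compositions $\b(\,\cdot\,)$. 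With this, part~(a) of the theorem gives $(L_1,L_\infty)_{\tilde\theta,B_\theta,E}$ with $\tilde\theta=(1-\theta)\theta_0+\theta\theta_1=1-\tfrac1p$ and $B_\theta(u)=\bigl(\a(u)\|\b_0\|_{\widetilde{L}_1(u,1)}\bigr)^{1-\theta}\b_1^\theta(u)\b(\rho(u))$, equivalent on $(0,1/2)$ to $\B_\theta(u)=\ell^{\frac{\alpha(1-\theta)}{p'_0}}(u)\b_1^\theta(u)\b(\rho(u))$; part~(b) gives $\overline{X}^{\mathcal L}_{\theta_0,B_0,E,1,L_{p_0}}\cap\overline{X}^{\mathcal L,\mathcal L}_{\theta_0,\b\circ\rho,E,\ell^{\frac\alpha{p'_0}-1}(u),L_1,1,L_{p_0}}$ with $B_0(u)=\|\b_0\|_{\widetilde{L}_1(u,1)}\b(\rho(u))\sim\ell^{\frac\alpha{p'_0}}(u)\b(\rho(u))$ on $(0,1/2)$; and part~(c), under the standing assumption $\|\b\|_{\widetilde{E}(0,1)}<\infty$, gives $\overline{X}^{\mathcal R}_{\theta_1,\b\circ\rho,E,\b_1,E_1}$.

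Finally I would rewrite these in the notation of the statement. In case~(a), \eqref{eU} together with the observation around \eqref{equiv}---that two slowly varying weights equivalent on $(0,1/2)$ define the same space $\overline{X}_{\theta,\cdot,E}$---gives $(L_1,L_\infty)_{1-\frac1p,\B_\theta,E}=L_{p,\B_\theta,E}$ (along the way one checks $\B_\theta\in SV(0,1)$ and $p\in(p_0,p_1)$ using Lemma~\ref{lem0}). The same principle, in its $\mathcal L$- and $\mathcal{L,L}$-versions (valid for the same reason, by the monotonicity of $K$ and Lemma~\ref{lem0}(iii)), lets one replace $\|\ell^{\frac\alpha{p'_0}-1}\|_{\widetilde{L}_1(u,1)}$ by $\ell^{\frac\alpha{p'_0}}(u)$ throughout case~(b), producing the asserted form with $B_0(u)=\ell^{\frac\alpha{p'_0}}(u)\b(\rho(u))$; case~(c) is already in the desired shape.

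The argument uses no analytic ingredient beyond \eqref{eSL}, \eqref{eU}, the equivalence \eqref{einfty2}, Remark~\ref{rem23} and the first theorem of Subsection~\ref{subL}. I expect the only delicate point---and the main obstacle---to be the bookkeeping: keeping track of which equivalences hold merely on $(0,1/2)$ rather than on all of $(0,1)$, and invoking the ``equivalent weights give equal spaces'' principle in its three incarnations ($\overline{X}_{\theta,\cdot,E}$, $\mathcal L$, and $\mathcal{L,L}$) to absorb those discrepancies.
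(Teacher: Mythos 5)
Your proof is correct and follows exactly the route that the paper implicitly endorses; the paper itself omits the proof with the remark ``The proofs of the following results can be carried out using similar arguments to those of the previous corollaries,'' and your argument is precisely that: express the two spaces via \eqref{eSL} and \eqref{eU}, feed them (with $\theta_0=1-\frac1{p_0}$, $\theta_1=1-\frac1{p_1}$, $\b_0=\ell^{\alpha/p'_0-1}$, $E_0=L_1$, $a\equiv1$, $F=L_{p_0}$) into the first theorem of \S\ref{subL} in its ordered-couple form, simplify $\|\b_0\|_{\widetilde L_1(u,1)}\sim\ell^{\alpha/p'_0}(u)$ by \eqref{einfty2}, and absorb the $(0,1/2)$-versus-$(0,1)$ discrepancy using \eqref{equiv} and Remark~\ref{rem23}. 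The one place where you go slightly beyond what the paper proves explicitly is in invoking the ``equivalent weights on $(0,1/2)$ give the same space'' principle for the $\mathcal L$- and $\mathcal{L,L}$-constructions in part~(b), but the argument is the same as the one the paper sketches for \eqref{equiv} (slowly varying functions are bounded above and below on $[1/2,1]$, and estimates such as \eqref{e1}--\eqref{e2} control the contribution of $(1/2,1)$ by that of $(0,1/2)$), so this is not a genuine gap.
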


In particular, 
$$\big(L^{(p_0,\alpha},L_{p_1}\big)_{1,\ell^{\frac{-\beta}{p_1}}(u),L_\infty}=L^{p_1),\beta}.$$

We are  also able to interpolate the small Lebesque space $L^{(p_0,\alpha}$ with the Lorentz-Zygmund  spaces $L_{\infty,q}(\log L)^\beta$ and $L^\beta_{\exp}$.  These spaces consist of all measurable functions  $f$ on $(0,1)$  for which the respective norms
\begin{align*}
& \|f\|_{\infty,q,\beta} = \bigg   (  \int_{0}^{1}   \big(\ell^{\beta}(t)  f^{*}(t)     \big)^{q}   \frac{dt}{t}   \Big)^{1/q}, \  \quad \text{if } \beta+\frac1q < 0,\quad (1\leq q<\infty)\\
& \|f\|_{L^\beta_{\exp}}  = \sup_{0 < t < 1}   \ell^{\beta}(t) f^{*}(t), \qquad\qquad \qquad \text{        if } \beta \leq 0,\quad (q=\infty)
\end{align*}
are finite (see \cite{BR}). For simplicity, we jointly denote them by $L_{\infty,q,\beta}$, when $1\leq q\leq \infty$.
They are ultrasymmetric spaces (see \cite[Example 4.4] {FMS-5}) and they can be identified as interpolation spaces between $L_1$ and $L_\infty$ in the following way
\begin{equation}\label{eexp}
(L_1,L_\infty)_{1,\ell^{\beta}(u),L_q}=L_{\infty,q,\beta}. 
\end{equation}
Now, we are in a position to apply  Theorem \ref{thm49}.

\begin{cor}\label{712}
Let $E$ be an r.i. space, $\b\in SV(0,1)$, $1<p_0<\infty$ and $1\leq q_1\leq \infty$. Assume that $\alpha>0$ and $\beta+\frac{1}{q_1}<0$, or $\beta\leq 0$ if $q_1=\infty$, and consider the function
$\rho(u)=u^{\frac{1}{p_0}}\ell^{\frac{\alpha}{p_0'}-(\beta+\frac{1}{q_1})}(u)$, $u\in(0,1)$. The following statements hold:

\vspace{1mm}
\begin{itemize}
\item[a)] If
$0<\theta< 1$, then
$$
\big(L^{(p_0,\alpha},L_{\infty,q_1,\beta}\big)_{\theta,\b,E}=L_{p,\B_\theta,E}$$
where $\frac1{p}=\frac{1-\theta}{p_0}$ and
$\B_{\theta}(u)=\ell^{(1-\theta)\frac{\alpha}{p'_0}+\theta(\beta+\frac{1}{q_1})}(u)\b(\rho(u))$, $u\in(0,1)$.

\vspace{1mm}
\item[b)] If $\theta=0$, then 
$$\big(L^{(p_0,\alpha},L_{\infty,q_1,\beta}\big)_{0,\b,E}=(L_1,L_\infty)^{\mathcal L}_{1-\frac{1}{p_0},B_0,E,1,L_{p_0}}\cap(L_1,L_\infty)^{\mathcal L,\mathcal L}_{1-\frac{1}{p_0},\b\circ\rho,E,\ell^{\frac{\alpha}{p'_0}-1}(u),L_1,1,L_{p_0}}$$
where $B_0(u)=\ell^{\frac{\alpha}{p'_0}}(u)\b(\rho(u))$, $u\in(0,1)$.

\vspace{1mm}\item[c)] If  $\theta=1$ and 
$\|\b\|_{\widetilde{E}(0,1)}<\infty$, then 	
$$\big(L^{(p_0,\alpha},L_{\infty,q_1,\beta}\big)_{1,\b,E}=(L_1,L_\infty)_{1,\B_1,E}\cap(L_1,L_\infty)^{\mathcal R}_{1,\b\circ\rho,E,\ell^{\beta}(u),L_{q_1}}$$
where  $\B_1(u)=\ell^{\beta+\frac{1}{q_1}}(u)\b(\rho(u))$, $u\in(0,1)$.
\end{itemize}
\end{cor}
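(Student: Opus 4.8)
The plan is to derive the corollary directly from Theorem~\ref{thm49} once both endpoint spaces are written as interpolation spaces of the ordered couple $(L_1,L_\infty)$. First I would record that $(L^{(p_0,\alpha},L_{\infty,q_1,\beta})$ is an ordered couple: the embedding $L_{\infty,q_1,\beta}\hookrightarrow L^{(p_0,\alpha}$ follows, as in the preceding corollaries, from the decay estimate $f^*(t)\lesssim\ell^{-\beta-1/q_1}(t)$ on $L_{\infty,q_1,\beta}$ (which uses precisely $\beta+\tfrac1{q_1}<0$, resp. $\beta\le0$ if $q_1=\infty$) together with Lemma~\ref{lemainclusion}. Then, by \eqref{eSL} and \eqref{eexp},
$$L^{(p_0,\alpha}=(L_1,L_\infty)^{\mathcal L}_{1-\frac1{p_0},\ell^{\frac{\alpha}{p_0'}-1}(u),L_1,1,L_{p_0}}\mand L_{\infty,q_1,\beta}=(L_1,L_\infty)_{1,\ell^{\beta}(u),L_{q_1}},$$
so the couple we interpolate is exactly of the form $(\overline{X}^{\mathcal L}_{\theta_0,\b_0,E_0,\a,F},\overline{X}_{1,\b_1,E_1})$ treated in Theorem~\ref{thm49}, with $\overline{X}=(L_1,L_\infty)$, $\theta_0=1-\tfrac1{p_0}$, $\b_0=\ell^{\frac{\alpha}{p_0'}-1}$, $E_0=L_1$, $\a\equiv1$, $F=L_{p_0}$, $\b_1=\ell^{\beta}$, $E_1=L_{q_1}$. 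Since the couple is ordered, the parameters live on $(0,1)$ and all conditions on $(1,\infty)$ are vacuous; the only remaining hypothesis of Theorem~\ref{thm49}, namely $\|\b_1\|_{\widetilde{L}_{q_1}(0,1)}<\infty$, is by \eqref{einfty1} equivalent to the assumption $\beta+\tfrac1{q_1}<0$ (resp. $\beta\le0$ when $q_1=\infty$) made in the statement.

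Next I would evaluate the weight $\rho$ and the functions $B_\theta,B_0,B_1$ supplied by Theorem~\ref{thm49}. Since $\a\equiv1$ one has $u^{1-\theta_0}=u^{1/p_0}$; \eqref{einfty2} gives $\|\ell^{\frac{\alpha}{p_0'}-1}\|_{\widetilde{L}_1(u,1)}\sim\ell^{\frac{\alpha}{p_0'}}(u)$ (here $\sigma+\tfrac1q=\tfrac{\alpha}{p_0'}>0$) and \eqref{einfty1} gives $\|\ell^{\beta}\|_{\widetilde{L}_{q_1}(0,u)}\sim\ell^{\beta+\frac1{q_1}}(u)$, whence
$$\rho(u)=u^{1-\theta_0}\frac{\a(u)\|\b_0\|_{\widetilde{L}_1(u,1)}}{\|\b_1\|_{\widetilde{L}_{q_1}(0,u)}}\sim u^{\frac1{p_0}}\ell^{\frac{\alpha}{p_0'}-(\beta+\frac1{q_1})}(u),\qquad u\in(0,\tfrac12),$$
which is the $\rho$ of the statement. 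Feeding the same two equivalences into the formulas of Theorem~\ref{thm49} converts $B_\theta(u)=(\a(u)\|\b_0\|_{\widetilde{L}_1(u,1)})^{1-\theta}(\|\b_1\|_{\widetilde{L}_{q_1}(0,u)})^{\theta}\b(\rho(u))$ into $\ell^{(1-\theta)\frac{\alpha}{p_0'}+\theta(\beta+\frac1{q_1})}(u)\b(\rho(u))$, $B_0(u)=\|\b_0\|_{\widetilde{L}_1(u,1)}\b(\rho(u))$ into $\ell^{\frac{\alpha}{p_0'}}(u)\b(\rho(u))$, and $B_1(u)=\|\b_1\|_{\widetilde{L}_{q_1}(0,u)}\b(\rho(u))$ into $\ell^{\beta+\frac1{q_1}}(u)\b(\rho(u))$, all up to equivalence on $(0,\tfrac12)$. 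Moreover $\tilde\theta=(1-\theta)\theta_0+\theta=1-\tfrac{1-\theta}{p_0}$, so setting $\tfrac1p:=1-\tilde\theta=\tfrac{1-\theta}{p_0}\in(0,1)$ yields $1<p<\infty$, the exponent appearing in the statement.

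Finally, in case (a) I would use \eqref{equiv} to replace $B_\theta$ by the equivalent weight on $(0,\tfrac12)$ and then \eqref{eU} to rewrite $(L_1,L_\infty)_{\tilde\theta,B_\theta,E}=L_{p,B_\theta,E}$. In case (b) I would substitute $\theta_0=1-\tfrac1{p_0}$, $\a\equiv1$, $F=L_{p_0}$, $\b_0=\ell^{\frac{\alpha}{p_0'}-1}$, $E_0=L_1$ and the simplified $B_0$ into the $\mathcal L$- and $\mathcal{L,L}$-spaces produced by Theorem~\ref{thm49}(b); and in case (c), where the extra hypothesis $\|\b\|_{\widetilde{E}(0,1)}<\infty$ is exactly the one assumed, I would substitute $\theta_1=1$, $\b_1=\ell^{\beta}$, $E_1=L_{q_1}$ and the simplified $B_1$ into the real and $\mathcal R$-spaces produced by Theorem~\ref{thm49}(c). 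None of this is difficult in itself; the proof is essentially bookkeeping, and the one point that needs care is the consistent application of \eqref{einfty1}--\eqref{einfty2}, which hold only on $(0,1)$, resp. $(0,\tfrac12)$: one must track the endpoint $u\to0$ versus $u\to1$ and use (via \eqref{equiv} and the corresponding invariance of the $\mathcal L$, $\mathcal{L,L}$, $\mathcal R$, $\mathcal{R,R}$ constructions) the fact that altering a weight near $u=1$ does not change the spaces. Checking that the single threshold $\beta+\tfrac1{q_1}<0$ is simultaneously what makes $L_{\infty,q_1,\beta}$ a genuine intermediate space, what makes Theorem~\ref{thm49} applicable, and what makes the exponents in $\rho$ and in $B_\theta,B_0,B_1$ come out as stated, is the only step worth spelling out in detail.
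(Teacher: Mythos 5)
Your proposal is correct and follows exactly the route the paper intends: the paper introduces Corollary~\ref{712} with the sentence ``Now, we are in a position to apply Theorem \ref{thm49}'' and gives no further details, precisely because the work consists of the parameter identifications and simplifications you spell out. Specifically, you use \eqref{eSL} and \eqref{eexp} to write $L^{(p_0,\alpha}$ as the $\mathcal L$-space $(L_1,L_\infty)^{\mathcal L}_{1-\frac1{p_0},\ell^{\frac{\alpha}{p_0'}-1},L_1,1,L_{p_0}}$ and $L_{\infty,q_1,\beta}$ as $(L_1,L_\infty)_{1,\ell^\beta,L_{q_1}}$, check that the ordered couple hypothesis and $\|\ell^\beta\|_{\widetilde L_{q_1}(0,1)}<\infty$ reduce to $\beta+\tfrac1{q_1}<0$ (resp.\ $\beta\le0$ for $q_1=\infty$), simplify $\rho$, $B_\theta$, $B_0$, $B_1$ via \eqref{einfty1}--\eqref{einfty2}, and finally translate $(L_1,L_\infty)_{\tilde\theta,B_\theta,E}$ back to $L_{p,B_\theta,E}$ via \eqref{eU} and \eqref{equiv}. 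All computations check out — in particular $\tilde\theta=1-\tfrac{1-\theta}{p_0}$, so $\tfrac1p=\tfrac{1-\theta}{p_0}$, and the exponent in $B_\theta$ is $(1-\theta)\tfrac{\alpha}{p_0'}+\theta(\beta+\tfrac1{q_1})$ — so this is essentially the same proof as the paper's.
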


Finally, we can apply Theorem \ref{thm6.6} to deduce:

\begin{cor}
Let $E$ be an r.i. space, $\b\in SV(0,1)$, $1<p_0<\infty$ and $\alpha>0$. Consider the function
$\rho(u)=u^{\frac{1}{p_0}}\ell^{\frac{\alpha}{p'_0}}(u)$, $u\in(0,1)$.

\vspace{1mm}
\begin{itemize}
\item[a)] If
$0<\theta< 1$, or $\theta=1$ and $\|\b\|_{\widetilde{E}(0,1)}<\infty$, then
$$
\big(L^{(p_0,\alpha},L_{\infty}\big)_{\theta,\b,E}=L_{p,\B_\theta,E}$$
where $\frac1{p}=\frac{1-\theta}{p_0}$ and
$\B_{\theta}(u)=\ell^{\frac{\alpha(1-\theta)}{p'_0}}(u)\b(\rho(u))$, $u\in(0,1)$.

\vspace{1mm}
\item[b)] If $\theta=0$, then
$$\big(L^{(p_0,\alpha},L_{\infty}\big)_{0,\b,E}=(L_1,L_\infty)^{\mathcal L}_{1-\frac{1}{p_0},B_0,E,1,L_{p_0}}\cap(L_1,L_\infty)^{\mathcal L,\mathcal L}_{1-\frac{1}{p_0},\b\circ\rho,E,\ell^{\frac{\alpha}{p'_0}-1}(u),L_1,1,L_{p_0}}$$
where
$\B_0(u)=\ell^{\frac{\alpha}{p'_0}}(u)\b(\rho(u))$, $u\in(0,1)$.
\end{itemize}
\end{cor}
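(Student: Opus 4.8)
The plan is to recognize $(L^{(p_0,\alpha},L_\infty)$ as a particular instance of the couple $(\overline{X}^{\mathcal L}_{\theta_0,\b_0,E_0,\a,F},X_1)$ treated in Theorem \ref{thm6.6}, apply that theorem, and translate the outcome back using the dictionary $(L_1,L_\infty)_{1-\frac1p,\b,E}=L_{p,\b,E}$ of \eqref{eU}. Concretely, I would invoke the identification \eqref{eSL},
$$L^{(p_0,\alpha}=(L_1,L_\infty)^{\mathcal L}_{1-\frac1{p_0},\,\ell^{\frac{\alpha}{p_0'}-1}(u),\,L_1,\,1,\,L_{p_0}},$$
so that with $\overline{X}=(L_1,L_\infty)$, $X_1=L_\infty$, $\theta_0=1-\frac1{p_0}\in(0,1)$, $\b_0=\ell^{\frac{\alpha}{p_0'}-1}$, $E_0=L_1$, $\a\equiv1$ and $F=L_{p_0}$, the couple in question is exactly $(\overline{X}^{\mathcal L}_{\theta_0,\b_0,E_0,\a,F},X_1)$. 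Since we work on $(0,1)$ with an ordered couple, every hypothesis of Theorem \ref{thm6.6} that refers to $(1,\infty)$ is automatically fulfilled (see \S\ref{aordered}); in particular $\|\b_0\|_{\widetilde{E}_0(1,\infty)}<\infty$ is vacuous, and in part b) the condition $\|\b\|_{\widetilde{E}(1,\infty)}<\infty$ is dropped.

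Next I would compute the weight $\rho$ of Theorem \ref{thm6.6}: with $\a\equiv1$ it is $\rho_0(u)=u^{1-\theta_0}\|\b_0\|_{\widetilde{E}_0(u,\infty)}=u^{1/p_0}\,\|\ell^{\frac{\alpha}{p_0'}-1}(t)\|_{\widetilde{L}_1(u,1)}$. Applying \eqref{einfty2} with $\sigma=\frac{\alpha}{p_0'}-1$ and $q=1$ (note $\sigma+\frac1q=\frac{\alpha}{p_0'}>0$) gives $\|\ell^{\frac{\alpha}{p_0'}-1}(t)\|_{\widetilde{L}_1(u,1)}\sim\ell^{\frac{\alpha}{p_0'}}(u)$ for $u\in(0,1/2)$, whence $\rho_0(u)\sim u^{1/p_0}\ell^{\frac{\alpha}{p_0'}}(u)=\rho(u)$; since on $(1/2,1)$ both sides are comparable to positive constants, this equivalence holds on all of $(0,1)$, and hence $\b\circ\rho_0\sim\b\circ\rho$ by Remark \ref{rem23}. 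For the same reason $\|\b_0\|_{\widetilde{E}_0(u,\infty)}\sim\ell^{\frac{\alpha}{p_0'}}(u)$ on $(0,1)$.

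With these preliminaries, part a) follows from Theorem \ref{thm6.6} a): it gives $(L^{(p_0,\alpha},L_\infty)_{\theta,\b,E}=(L_1,L_\infty)_{\tilde\theta,B_\theta,E}$ with $\tilde\theta=(1-\theta)\theta_0+\theta=1-\frac{1-\theta}{p_0}$ and $B_\theta(u)=\big(\|\b_0\|_{\widetilde{E}_0(u,\infty)}\big)^{1-\theta}\b(\rho_0(u))\sim\ell^{\frac{\alpha(1-\theta)}{p_0'}}(u)\b(\rho(u))=\B_\theta(u)$ on $(0,1/2)$; replacing $B_\theta$ by $\B_\theta$ (cf. \eqref{equiv}) and then using \eqref{eU} when $\theta<1$, and its $p=\infty$ counterpart from \eqref{eexp} when $\theta=1$ (legitimate precisely because $\|\b\|_{\widetilde{E}(0,1)}<\infty$), identifies the space with $L_{p,\B_\theta,E}$, $\frac1p=\frac{1-\theta}{p_0}$. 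For part b), Theorem \ref{thm6.6} b) yields $(L^{(p_0,\alpha},L_\infty)_{0,\b,E}=(L_1,L_\infty)^{\mathcal L}_{\theta_0,B_0,E,1,L_{p_0}}\cap(L_1,L_\infty)^{\mathcal L,\mathcal L}_{\theta_0,\b\circ\rho_0,E,\ell^{\frac{\alpha}{p_0'}-1}(u),L_1,1,L_{p_0}}$ with $B_0(u)=\|\b_0\|_{\widetilde{E}_0(u,\infty)}\b(\rho_0(u))\sim\B_0(u)$; replacing $B_0$ by the equivalent weight $\B_0$ (which alters the $\mathcal L$-quasinorm only by a bounded factor, by the lattice property of $\widetilde{E}$) and $\b\circ\rho_0$ by $\b\circ\rho$ gives the asserted identity. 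The argument contains no genuine obstacle; the only points requiring care are the bookkeeping of the several parameters through these substitutions, the verification that the equivalences obtained on $(0,1/2)$ persist on the whole interval $(0,1)$, and the handling of the $p=\infty$ endpoint of \eqref{eU} in part a).
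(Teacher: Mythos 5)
Your proposal is correct and follows exactly the route the paper implicitly intends: identify $L^{(p_0,\alpha}$ as an $\mathcal L$-space via \eqref{eSL}, apply Theorem \ref{thm6.6} with $\theta_0=1-\tfrac1{p_0}$, $\b_0=\ell^{\alpha/p_0'-1}$, $E_0=L_1$, $\a\equiv 1$, $F=L_{p_0}$, and then simplify the resulting parameters using \eqref{einfty2}, \eqref{equiv} and \eqref{eU}. Your bookkeeping of $\rho_0\sim\rho$, $\tilde\theta=1-\tfrac{1-\theta}{p_0}$, and $B_\theta\sim\B_\theta$ is all correct, and your care about the $p=\infty$ endpoint of \eqref{eU} under the hypothesis $\|\b\|_{\widetilde E(0,1)}<\infty$ is the right thing to flag.
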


\subsection{Generalized Gamma spaces.}\hspace{1mm}

\vspace{2mm}
Our results can also be applied to the Generalized Gamma spaces with double weight defined in \cite{FFGKR}. 

\begin{defn}
Let $1 \leq p < \infty$, $1 \leq q \leq \infty$ and $w_{1}$, $w_{2}$ two weights on $(0,1)$ satisfying the following conditions:

\vspace{2mm}
\begin{enumerate}
\item[(c1)] There exist $K_{21}>0$ such that $w_2(2t)\leq K_{12}w_2(t)$,  for all $t\in(0,1/2)$. The space $L^p(0,1;w_2)$ is continuously embedded in $L^1(0,1)$.
\item[(c2)] The function $\int_0^t w_2(s)ds$ belongs to $L^{\frac{q}{p}}(0,1;w_1).$
\end{enumerate}
\vspace{2mm}The \textit{Generalized Gamma} space with double weights $G\Gamma(p,q,w_0,w_1)$ is the set of all measurable functions $f$ on $(0,1)$ such that 
$$\|f\|_{G\Gamma}=\bigg(\int_0^1w_1(t)\Big(\int_0^tw_2(s)(f^*(s))^p\ ds\Big)^{\frac{q}{p}}dt\bigg)^{\frac{1}{q}}<\infty$$
\end{defn}

These spaces are a generalization of the $G\Gamma(p,q,w_0):= G\Gamma(p,q,w_0,1)$, introduced in  \cite{FR}, while the  spaces $G\Gamma(p,\infty,w_0,w_1)$ appeared in \cite{GAG}

If we assume that $uw_1(u)$ and $w_2$ are slowly varying functions, we can identify the Generalized Gamma space as an ${\mathcal L}$-space in the following way
$$G\Gamma(p,q,w_1,w_2)=(L_1,L_\infty)^{\mathcal L}_{1-\frac1{p},(uw_1(u))^{\frac{1}{q}},L_q,(w_2(u))^{\frac{1}{p}},L_p} .$$
Thus, we can  apply the results from \S 5.2 to interpolate these spaces with ultrasymmetric spaces.

\begin{cor}
Let $E$, $E_1$ be r.i. spaces and $\b, \b_1, uw_1(u), w_2\in SV(0,1)$. Let $1<p_0<p_1<\infty$, $1\leq q\leq \infty$ and consider the function 
$$\rho(u)=u^{\frac1{p_0}-\frac1{p_1}}\frac{w_2^{\frac1{p_0}}(u)\|(tw_1(t))^{\frac1{q_0}}\|_{\widetilde{L}_{q_0}(u,1)}}{\b_1(u)},\quad u\in(0,1).$$

\begin{itemize}
\item[a)] If $0<\theta< 1$, then
$$\Big(G\Gamma(p_0,q_0,w_1,w_2),L_{p_1,\b_1,E_1}\Big)_{\theta,\b,E}=L_{p,\B_\theta,E}$$
where  $\frac1{p}=\frac{1-\theta}{p_0}+\frac{\theta}{p_1}$ and $$\B_\theta(u)=\Big(w_2(u))^{\frac{1}{p_0}}\|(tw_1(t))^\frac{1}{q_0}\|_{\widetilde{L}_{q_0}(u,1)}\Big)^{1-\theta}\b_1^{\theta}(u)\b(\rho(u)),\   u\in(0,1).$$

\vspace{1mm}
\item[b)] If $\theta=0$ then
\begin{align*}
\Big(G\Gamma(p_0,&q_0,w_1,w_2),L_{p_1,\b_1,E_1}\Big)_{0,\b,E}\\&=(L_1,L_\infty)^{\mathcal L}_{1-\frac{1}{p_0},B_0,E,(w_2(u))^{\frac1{p_0}},L_{p_0}}\cap (L_1,L_\infty)^{\mathcal L,\mathcal L}_{1-\frac{1}{p_0},\b\circ\rho,E,(uw_1(u))^{\frac1{q_0}},L_{q_0},(w_2(u))^{\frac1{p_0}},L_{p_0}}
\end{align*}
where  $\B_0(u)=\Big((w_2(u))^{\frac{1}{p_0}}\|(tw_1(t))^\frac{1}{q_0}\|_{\widetilde{L}_{q_0}(u,1)}\Big)^{1-\theta}\b_1^{\theta}(u)\b(\rho(u))$, $u\in(0,1)$.

\vspace{1mm}
\item[c)] If $\theta=1$ and  $\|\b\|_{\widetilde{E}(0,1)}<\infty$, then
$$\Big(G\Gamma(p_0,q_0,w_1,w_2),L_{p_1,\b_1,E_1}\Big)_{1,\b,E}=(L_1,L_\infty)^{\mathcal R}_{1-\frac{1}{p_1},\b\circ\rho(u),E,\b_1,E_1}.$$
\end{itemize}
\end{cor}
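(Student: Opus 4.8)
The plan is to reduce the statement to the first theorem of \S\ref{subL} (the one with $0<\theta_0<\theta_1<1$), applied to the couple $\overline{X}=(L_1,L_\infty)$ with slowly varying parameters on $(0,1)$ and r.i.\ spaces on $(0,1)$, so that (as explained in \S\ref{aordered}) every condition concerning the interval $(1,\infty)$ is omitted. First I would rewrite both endpoints in that language. The identity displayed just before the statement gives
\[
G\Gamma(p_0,q_0,w_1,w_2)=(L_1,L_\infty)^{\mathcal L}_{1-\frac1{p_0},\,(uw_1(u))^{1/q_0},\,L_{q_0},\,(w_2(u))^{1/p_0},\,L_{p_0}},
\]
while \eqref{eU} gives $L_{p_1,\b_1,E_1}=(L_1,L_\infty)_{1-\frac1{p_1},\b_1,E_1}$. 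Setting $\theta_0=1-\tfrac1{p_0}$, $\theta_1=1-\tfrac1{p_1}$, $\a(u)=(w_2(u))^{1/p_0}$, $F=L_{p_0}$, $\b_0(u)=(uw_1(u))^{1/q_0}$ and $E_0=L_{q_0}$, the space in question becomes $(\overline{X}^{\mathcal L}_{\theta_0,\b_0,E_0,\a,F},\overline{X}_{\theta_1,\b_1,E_1})_{\theta,\b,E}$, and since $1<p_0<p_1<\infty$ we indeed have $0<\theta_0<\theta_1<1$; note also that the couple $(G\Gamma(p_0,q_0,w_1,w_2),L_{p_1,\b_1,E_1})$ is ordered, via an embedding $L_{p_1,\b_1,E_1}\hookrightarrow G\Gamma(p_0,q_0,w_1,w_2)$ of the kind established before the preceding corollaries.

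Before applying the theorem I would check its admissibility hypotheses. Working over the ordered couple $(L_1,L_\infty)$ with all weights in $SV(0,1)$, the only side condition of the theorem that survives, namely $\|\b_0\|_{\widetilde{E}_0(1,\infty)}<\infty$, is vacuous; conditions (c1) and (c2) guarantee that $G\Gamma(p_0,q_0,w_1,w_2)$ is a well-defined intermediate space for $(L_1,L_\infty)$, and in part c) the extra hypothesis $\|\b\|_{\widetilde{E}(0,1)}<\infty$ supplies the remaining finiteness. One also checks that the weights the theorem produces stay slowly varying: by Lemma \ref{lem0}(i) the functions $(uw_1(u))^{1/q_0}$ and $(w_2(u))^{1/p_0}$ belong to $SV(0,1)$, by Lemma \ref{lem1}(ii) so does $u\mapsto\|(tw_1(t))^{1/q_0}\|_{\widetilde{L}_{q_0}(u,1)}$ (which is what $\|\b_0\|_{\widetilde{E}_0(u,\infty)}$ means here), and the function $\rho$ produced by the theorem, $\rho(u)=u^{\theta_1-\theta_0}\frac{\a(u)\|\b_0\|_{\widetilde{E}_0(u,1)}}{\b_1(u)}$ with $\theta_1-\theta_0=\tfrac1{p_0}-\tfrac1{p_1}>0$, is exactly the $\rho$ of the statement and has the form $u^{1/p_0-1/p_1}$ times an element of $SV(0,1)$, whence $\b\circ\rho\in SV(0,1)$ by Lemma \ref{lem0}(ii) and Remark \ref{rem23}.

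Finally I would read off each case. For $0<\theta<1$, part a) of the theorem yields $(L_1,L_\infty)_{\tilde{\theta},B_\theta,E}$ with $\tilde{\theta}=(1-\theta)\theta_0+\theta\theta_1=1-\big(\tfrac{1-\theta}{p_0}+\tfrac{\theta}{p_1}\big)=1-\tfrac1p$ and $B_\theta(u)=\big(\a(u)\|\b_0\|_{\widetilde{E}_0(u,1)}\big)^{1-\theta}(\b_1(u))^{\theta}\b(\rho(u))$; one further application of \eqref{eU} identifies this with $L_{p,B_\theta,E}$, which is a). For $\theta=0$, part b) returns $\overline{X}^{\mathcal L}_{\theta_0,B_0,E,\a,F}\cap\overline{X}^{\mathcal L,\mathcal L}_{\theta_0,\b\circ\rho,E,\b_0,E_0,\a,F}$, and for $\theta=1$, part c) returns $\overline{X}^{\mathcal R}_{\theta_1,\b\circ\rho,E,\b_1,E_1}$; substituting the identifications $\theta_0=1-\tfrac1{p_0}$, $\theta_1=1-\tfrac1{p_1}$, $\a=(w_2(u))^{1/p_0}$, $F=L_{p_0}$, $\b_0=(uw_1(u))^{1/q_0}$, $E_0=L_{q_0}$ yields b) and c). The argument is essentially bookkeeping; the points that require attention are the verification that the composite weights (especially $\b\circ\rho$ and $\|(tw_1(t))^{1/q_0}\|_{\widetilde{L}_{q_0}(u,1)}$) remain slowly varying, the remark that over the ordered couple $(L_1,L_\infty)$ all conditions at infinity disappear, and the arithmetic that $\tilde{\theta}=1-\tfrac1p$.
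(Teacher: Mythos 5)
Your proof is correct and follows the approach the paper clearly intends, even though the paper states this corollary without proof: you correctly reduce to the first (unnumbered) theorem of \S\ref{subL} via the identification $G\Gamma(p_0,q_0,w_1,w_2)=(L_1,L_\infty)^{\mathcal L}_{1-\frac1{p_0},(uw_1(u))^{1/q_0},L_{q_0},(w_2(u))^{1/p_0},L_{p_0}}$ and \eqref{eU}, with $\theta_0=1-\frac1{p_0}<\theta_1=1-\frac1{p_1}$, and you properly note that the $(0,1)$ ordered-couple framework of \S\ref{aordered} makes the hypothesis at infinity vacuous. Your bookkeeping of $\tilde\theta$, $\rho$, and $B_\theta$ is accurate; incidentally, the direct substitution of $\theta_0$ into part b) of that theorem gives $B_0(u)=\|(tw_1(t))^{1/q_0}\|_{\widetilde L_{q_0}(u,1)}\,\b(\rho(u))$, which you implicitly obtain and which differs from the $\B_0$ written in the corollary (the paper's displayed $\B_0$ appears to be a copy of the $\B_\theta$ formula, i.e., a typo carrying the extra factor $(w_2(u))^{1/p_0}$), so your computation is if anything more careful than the stated result.
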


\subsection{$A$ and $B$-type spaces.}\hspace{1mm}

\vspace{2mm}
Finally we consider the $A$ and $B$-type spaces studied by Pustylnik in \cite{pu2}.

\begin{defn}Given $1 <p<\infty$, $\alpha<1$ and $E$ an r.i. space. The space $A_{p,\alpha,E}$ is  the set of all measurable functions $f$ on $(0,1)$ such that
$$ \|f\|_{A_{p,\alpha,E}}=\Big\|\ell^{\alpha-1}(t)\int_t^1s^{\frac{1}{p}}f^{**}(s)\,\frac{ds}s\Big\|_
{\widetilde{E}}<\infty$$
assumed that the function $(1+u)^{\alpha-1}$ belongs to $E$ (i.e. $\|\ell^{\alpha-1}(t)\|_{\widetilde{E}(0,1)}<\infty$.)
The space $B_{p,\b,E}$ is the set of all measurable functions $f$ on $(0,1)$ such that 
$$
\|f\|_{B_{p,\alpha,E}}=\Big\|\sup_{0<s<t}s^{\frac{1}{p}}\ell^{\alpha-1}(s) f^{**}(s)\Big\|_{\widetilde{E}}<\infty.$$
\end{defn}

The space of $B$-type when $\alpha=0$  first appeared in \cite{CwP}. General versions of these spaces were studied in \cite{PS1}. The main advantage of the $A$ and $B$-type spaces is their optimality in the weak interpolation \cite{pu2,PS2}.

The $A$ and $B$-type spaces can be seen as $\mathcal{R}$ and $\mathcal{L}$ spaces, respectively. Indeed,
$$A_{p,\alpha,E}=(L_1,L_\infty)^{\mathcal{R}}_{1-\frac1p,\ell^{\alpha-1}(t),E,1,L_1}\quad B_{p,\alpha,E}=(L_1,L_\infty)^{\mathcal{L}}_{1-\frac1p,1,E,\ell^{\alpha-1}(t),L_\infty}.$$
Then, we can apply the results from \S \ref{sereiteration} to obtain the following interpolation formulae.

\begin{cor}
Let $E,\ E_0,\ E_1$ be r.i. spaces, $\b,\b_0\in SV(0,1)$, $1< p_0<p_1< \infty$, $\beta<1$ and assume that $(1+u)^{\beta-1}$ belongs to $E_1$. Consider the function 
$\rho(u)=u^{\frac{1}{p_0}-\frac{1}{p_1}}\b_0(u)\|\ell^{\beta-1}(t)\|^{-1}_{\widetilde{E}_1(0,u)}$, $u \in (0,1)$.
\begin{itemize}

\vspace{1mm}
\item[a)] If $0<\theta< 1$, then
\begin{equation*}
\big(L_{p_0,\b_0,E_0},A_{p_1,\beta,E_1}\big)_{\theta,\b,E}=L_{p,\B_\theta,E},
\end{equation*}
where $\frac1{p}=\frac{1-\theta}{p_0}+\frac{\theta}{p_1}$ and $\B_\theta(u)=\b_0^{1-\theta}(u)\|\ell^{\beta-1}(t)\|^{\theta}_{\widetilde{E}_1(0,u)}\b(\rho(u))$, $u \in(0,1)$.

\vspace{1mm}
\item[b)] If $\theta=0$, then
$$
\big(L_{p_0,\b_0,E_0},A_{p_1,\beta,E_1}\big)_{0,\b,E}=(L_1,L_\infty)^{\mathcal L}_{1-\frac{1}{p_0},\b\circ\rho,E,\b_0,E_0}.$$

\vspace{1mm}
\item[c)] If $\theta=1$ and  $\|\b\|_{\widetilde{E}(0,1)}<\infty$, then
\begin{align*}
\big(L_{p_0,\b_0,E_0},&A_{p_1,\beta,E_1}\big)_{1,\b,E}\\
&=(L_1,L_\infty)^{\mathcal R}_{1-\frac{1}{p_1},\B_1,E,1,L_{1}}\cap(L_1,L_\infty)^{\mathcal R,\mathcal R}_{1-\frac{1}{p_1},\b\circ\rho,E,\ell^{\beta-1}(u),E_1,1,L_1}\nonumber
\end{align*}
where $\B_1(u)=\|\ell^{\beta-1}(t)\|_{\widetilde{E}_1(0,u)}\b(\rho(u))$, $u \in(0,1)$.
\end{itemize}
\end{cor}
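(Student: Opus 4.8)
The plan is to identify both building-block spaces as members of the scales studied in \S\ref{sereiteration} and then quote Theorem \ref{thm5.9}. First I would record that, by \eqref{eU}, $L_{p_0,\b_0,E_0}=(L_1,L_\infty)_{1-\frac1{p_0},\b_0,E_0}$. Next, using Peetre's formula $K(s,f;L_1,L_\infty)=sf^{**}(s)$ together with the description of the $\mathcal R$-construction on $(0,1)$ recalled in \S\ref{aordered}, the very definition of $A_{p_1,\beta,E_1}$ unwinds to $A_{p_1,\beta,E_1}=(L_1,L_\infty)^{\mathcal R}_{1-\frac1{p_1},\ell^{\beta-1}(t),E_1,1,L_1}$; here the standing integrability assumption on $(1+u)^{\beta-1}$ is precisely $\|\ell^{\beta-1}\|_{\widetilde{E}_1(0,1)}<\infty$. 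Since we work on $(0,1)$, the couple $(L_1,L_\infty)$ is ordered, so the ordered-couple form of Theorem \ref{thm5.9} is available, with every hypothesis concerning $(1,\infty)$ suppressed.

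Then I would apply Theorem \ref{thm5.9} with $\theta_0=1-\frac1{p_0}$, $\theta_1=1-\frac1{p_1}$, $\b_1=\ell^{\beta-1}$, $\a\equiv1$ and $F=L_1$. Since $1<p_0<p_1<\infty$ we have $0<\theta_0<\theta_1<1$, and $\|\b_1\|_{\widetilde{E}_1(0,1)}<\infty$ by hypothesis, so all the assumptions of the theorem hold. The function $\rho$ produced by the theorem is $\rho(u)=u^{\theta_1-\theta_0}\b_0(u)\big(\a(u)\|\b_1\|_{\widetilde{E}_1(0,u)}\big)^{-1}=u^{\frac1{p_0}-\frac1{p_1}}\b_0(u)\|\ell^{\beta-1}\|^{-1}_{\widetilde{E}_1(0,u)}$, which is the $\rho$ in the statement, and it is slowly varying on $(0,1)$ by Lemmas \ref{lem0} and \ref{lem1}.

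For part a), Theorem \ref{thm5.9} a) gives $(L_{p_0,\b_0,E_0},A_{p_1,\beta,E_1})_{\theta,\b,E}=(L_1,L_\infty)_{\tilde\theta,\B_\theta,E}$ with $\tilde\theta=(1-\theta)\theta_0+\theta\theta_1=1-\big(\frac{1-\theta}{p_0}+\frac\theta{p_1}\big)=1-\frac1p$ and $\B_\theta(u)=\b_0^{1-\theta}(u)\big(\|\ell^{\beta-1}\|_{\widetilde{E}_1(0,u)}\big)^{\theta}\b(\rho(u))$, exactly as in the statement; a second use of \eqref{eU} rewrites the right-hand side as $L_{p,\B_\theta,E}$. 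For part b) ($\theta=0$), Theorem \ref{thm5.9} b) gives the answer at once, since on $(0,1)$ no condition on $\b$ near $\infty$ is needed. For part c) ($\theta=1$, $\|\b\|_{\widetilde{E}(0,1)}<\infty$), Theorem \ref{thm5.9} c) yields $\overline{X}^{\mathcal R}_{\theta_1,\B_1,E,\a,F}\cap\overline{X}^{\mathcal R,\mathcal R}_{\theta_1,\b\circ\rho,E,\b_1,E_1,\a,F}$ with $\B_1(u)=\|\b_1\|_{\widetilde{E}_1(0,u)}\b(\rho(u))$; substituting $\overline{X}=(L_1,L_\infty)$, $\theta_1=1-\frac1{p_1}$, $\b_1=\ell^{\beta-1}$, $\a\equiv1$, $F=L_1$ reproduces exactly the two spaces in the statement.

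I do not expect a genuine obstacle here: the proof is a translation of Theorem \ref{thm5.9} into the concrete ordered couple $(L_1,L_\infty)$ on $(0,1)$. The only points that need care are (i) verifying $0<\theta_0<\theta_1<1$ and $\|\ell^{\beta-1}\|_{\widetilde{E}_1(0,1)}<\infty$, both immediate; (ii) matching the slowly varying weights $\B_\theta$ and $\B_1$ coming out of the theorem with those in the statement; and (iii) in part a), not forgetting the closing application of \eqref{eU}.
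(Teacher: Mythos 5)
Your proposal is correct and follows exactly the route the paper intends: identify $L_{p_0,\b_0,E_0}$ and $A_{p_1,\beta,E_1}$ as an $\overline{X}_{\theta_0,\b_0,E_0}$-space and an $\overline{X}^{\mathcal R}_{\theta_1,\b_1,E_1,\a,F}$-space over the ordered couple $(L_1,L_\infty)$ on $(0,1)$, then read off Theorem \ref{thm5.9} a), b), c) with $\theta_i=1-\frac1{p_i}$, $\b_1=\ell^{\beta-1}$, $\a\equiv1$, $F=L_1$, and finally translate back via \eqref{eU} in part a). The parameter and weight bookkeeping in your write-up matches the statement in all three parts.
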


\begin{cor}
Let $E,\, E_0,\, E_1$ be r.i. spaces, $\b,\ \b_1\in SV(0,1)$, $1<p_0<p_1<\infty$ and $\alpha<1$. Consider the function $\rho(u)=u^{\frac{1}{p_0}-\frac{1}{p_1}}\ell^{\alpha-1}(u)\varphi_{E_0}(\ell(u))\b^{-1}_1(u)$, $u \in (0,1)$.

\vspace{1mm}
\begin{itemize}
\item[a)] If $0<\theta< 1$, then
$$\big(B_{p_0,\alpha,E_0},L_{p_1,\b_1,E_1}\big)_{\theta,\b,E}=L_{p,\B_\theta,E},$$
where $\frac1{p}=\frac{1-\theta}{p_0}+\frac{\theta}{p_1}$ and $\B_\theta(u)=\big(\ell^{\alpha-1}(u)\varphi_{E_0}(\ell(u))\big)^{1-\theta}\b^\theta_1(u)\b(\rho(u))$, $u \in(0,1)$.

\vspace{1mm}
\item[b)] If $\theta=0$, then
\begin{align*}
\big(B_{p_0,\alpha,E_0},&L_{p_1,\b_1,E_1}\big)_{0,\b,E}\\
&=(L_1,L_\infty)^{\mathcal L}_{1-\frac{1}{p_0},\B_0,E,\ell^{\alpha-1}(u),L_\infty}\cap(L_1,L_\infty)^{\mathcal L,\mathcal L}_{1-\frac{1}{p_0},\b\circ\rho,E,1,E_0,\ell^{\alpha-1}(u),L_\infty}
\end{align*}
where $B_0(u)=\varphi^{1-\theta}_{E_0}(\ell(u))\b(\rho(u))$, $u\in(0,1)$.

\vspace{1mm}
\item[c)] If $\theta=1$ and  $\|\b\|_{\widetilde{E}(0,1)}<\infty$, then
$$
\big(B_{p_0,\alpha,E_0},L_{p_1,\b_1,E_1}\big)_{1,\b,E}=(L_1,L_\infty)^{\mathcal R}_{1-\frac{1}{p_1},\b\circ\rho,E,\b_1,E_1}.$$
\end{itemize}
\end{cor}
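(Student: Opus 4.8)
The plan is to obtain this as a direct corollary of the first reiteration theorem of \S\ref{subL} (the one identifying $(\overline{X}^{\mathcal L}_{\theta_0,\b_0,E_0,\a,F},\overline{X}_{\theta_1,\b_1,E_1})_{\theta,\b,E}$ when $0<\theta_0<\theta_1<1$), applied to the ordered couple $\overline{X}=(L_1,L_\infty)$ over our probability space, so that all parameters may be taken on $(0,1)$ and every hypothesis about the interval $(1,\infty)$ is dropped (cf.\ \S\ref{aordered}). First I would record the two representations already available here: by hypothesis $B_{p_0,\alpha,E_0}=(L_1,L_\infty)^{\mathcal L}_{1-\frac1{p_0},1,E_0,\ell^{\alpha-1}(t),L_\infty}$, and by \eqref{eU} $L_{p_1,\b_1,E_1}=(L_1,L_\infty)_{1-\frac1{p_1},\b_1,E_1}$. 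Thus both spaces are intermediate for $(L_1,L_\infty)$, the couple on the left is compatible, and in the notation of that theorem we take $\theta_0=1-\frac1{p_0}$, $\theta_1=1-\frac1{p_1}$ (which satisfy $0<\theta_0<\theta_1<1$ because $1<p_0<p_1<\infty$), $\b_0\equiv1$, $\a=\ell^{\alpha-1}$, $F=L_\infty$, and keep $E_0,E_1,\b_1,\b,E$ as given.

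Next I would evaluate the shift function. The theorem's $\rho$ equals $u^{\theta_1-\theta_0}\,\a(u)\,\|\b_0\|_{\widetilde{E}_0(u,1)}/\b_1(u)$, where $\theta_1-\theta_0=\frac1{p_0}-\frac1{p_1}$ and $\a(u)=\ell^{\alpha-1}(u)$. The only non-bookkeeping point is the identification of $\|\b_0\|_{\widetilde{E}_0(u,1)}=\|\chi_{(u,1)}\|_{\widetilde{E}_0}$ with the fundamental function: since the homogeneous measure of $(u,1)$ is $\nu\big((u,1)\big)=\log\tfrac1u$, and since under the change of variable $t\mapsto\log t$ the space $\widetilde{E}_0$ corresponds to $E_0$ over a non-atomic space of infinite measure, one has $\|\chi_{(u,1)}\|_{\widetilde{E}_0}=\varphi_{E_0}\big(\log\tfrac1u\big)$; because $\varphi_{E_0}$ is quasi-concave and $\log\tfrac1u\sim\ell(u)$ on $(0,1/2)$, this is $\sim\varphi_{E_0}(\ell(u))$ there, which is all that matters for the spaces in play (as in the argument around \eqref{equiv}). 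Hence $\rho(u)\sim u^{\frac1{p_0}-\frac1{p_1}}\ell^{\alpha-1}(u)\,\varphi_{E_0}(\ell(u))\,\b_1^{-1}(u)$, the function in the statement, and since this is slowly varying, $\b\circ\rho$ is unaffected up to equivalence (Remark~\ref{rem23}).

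Finally I would substitute into the three parts of the theorem and translate back. In part~a), $\widetilde\theta=(1-\theta)\theta_0+\theta\theta_1=1-\frac1p$ with $\frac1p=\frac{1-\theta}{p_0}+\frac\theta{p_1}$, and the weight $\big(\a(u)\|\b_0\|_{\widetilde{E}_0(u,1)}\big)^{1-\theta}\b_1^{\theta}(u)\b(\rho(u))\sim\big(\ell^{\alpha-1}(u)\varphi_{E_0}(\ell(u))\big)^{1-\theta}\b_1^{\theta}(u)\b(\rho(u))=\B_\theta(u)$, so $(L_1,L_\infty)_{1-\frac1p,\B_\theta,E}=L_{p,\B_\theta,E}$ by \eqref{eU} (using that equivalent slowly varying weights define the same space). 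In part~b) ($\theta=0$) the theorem yields $(L_1,L_\infty)^{\mathcal L}_{1-\frac1{p_0},B_0,E,\ell^{\alpha-1}(u),L_\infty}\cap(L_1,L_\infty)^{\mathcal L,\mathcal L}_{1-\frac1{p_0},\b\circ\rho,E,1,E_0,\ell^{\alpha-1}(u),L_\infty}$ with $B_0(u)=\|\b_0\|_{\widetilde{E}_0(u,1)}\b(\rho(u))\sim\varphi_{E_0}(\ell(u))\b(\rho(u))$, exactly the stated $\B_0$. In part~c) ($\theta=1$, under $\|\b\|_{\widetilde{E}(0,1)}<\infty$, which is precisely the condition inherited from part~c) of the theorem) it yields $(L_1,L_\infty)^{\mathcal R}_{1-\frac1{p_1},\b\circ\rho,E,\b_1,E_1}$, as claimed.

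The one genuinely non-formal ingredient, and the step I would be most careful about, is the equivalence $\|\chi_{(u,1)}\|_{\widetilde{E}_0}\sim\varphi_{E_0}(\ell(u))$ on $(0,1/2)$ and the attendant verification that the weights produced by the reiteration theorem agree, up to equivalence, with those printed in the statement; everything else is a routine substitution of parameters into an already-established result.
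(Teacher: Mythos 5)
Your proof is correct and is precisely the argument the paper intends: the paper explicitly states that it omits the proof of this corollary because it follows by substitution from the reiteration theorems in \S\ref{subL} (the unnumbered theorem for $(\overline{X}^{\mathcal L}_{\theta_0,\b_0,E_0,\a,F},\overline{X}_{\theta_1,\b_1,E_1})_{\theta,\b,E}$ with $0<\theta_0<\theta_1<1$), and your instantiation with $\overline{X}=(L_1,L_\infty)$, $\theta_i=1-\frac1{p_i}$, $\b_0\equiv1$, $\a=\ell^{\alpha-1}$, $F=L_\infty$ is exactly the intended one. You also correctly identified the only substantive point, namely $\|1\|_{\widetilde{E}_0(u,1)}=\varphi_{E_0}(\nu(u,1))=\varphi_{E_0}(\log\tfrac1u)\sim\varphi_{E_0}(\ell(u))$ on $(0,1/2)$, which is legitimate because $\widetilde{E}_0$ is r.i. with respect to $\nu$ so the norm of a characteristic function depends only on its $\nu$-measure, and quasi-concavity of $\varphi_{E_0}$ handles the passage from $\log\tfrac1u$ to $\ell(u)$; the resulting equivalence of slowly varying weights does not change the interpolation space, as you noted via \eqref{equiv}. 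The exponent $1-\theta$ on $\varphi_{E_0}$ in the paper's formula for $\B_0$ in part~b) is, as you implicitly observe, a harmless redundancy since $\theta=0$ there.
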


In particular, the $B$-type spaces (or $A$-type) can be seen as limiting interpolation spaces between the ultrasymmetric spaces and $A$-type spaces (or $B$-type, respectively).
\begin{cor}
Let $E_0,\ E_1$ be r.i. spaces, $1<p_0<p_1<\infty$ and $\alpha,\beta<1$.  Then
$$(L_{p_0,\ell^{\alpha-1}(t),L_\infty},A_{p_1,\beta,E_1})_{0,1,E_0}=B_{p_0,\alpha,E_0}$$
and 
$$(B_{p_0,\alpha,E_0},L_{p_1,1,L_1})_{1,\ell^{\beta-1}(t),E_1}=A_{p_1,\beta,E_1}.$$
\end{cor}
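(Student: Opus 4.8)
The plan is to read off both identities from the two preceding corollaries by specializing their parameters, the only genuine work being a short slowly-varying computation in the second case. For the first identity I would apply the corollary on $(L_{p_0,\b_0,E_0},A_{p_1,\beta,E_1})_{\theta,\b,E}$ in its case $\theta=0$, choosing there $\b_0(t):=\ell^{\alpha-1}(t)$, the r.i.\ space in the role of its ``$E_0$'' to be $L_\infty$, $\b\equiv1$, and its outer r.i.\ space to be $E_0$. The hypotheses are met: $\ell^{\alpha-1},1\in SV(0,1)$, $1<p_0<p_1<\infty$, $\beta<1$, and $\|\ell^{\beta-1}\|_{\widetilde{E}_1(0,1)}<\infty$, the last being exactly the condition making $A_{p_1,\beta,E_1}$ a well-defined space. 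Since $\b\equiv1$ gives $\b\circ\rho\equiv1$, the corollary yields
$$\big(L_{p_0,\ell^{\alpha-1}(t),L_\infty},A_{p_1,\beta,E_1}\big)_{0,1,E_0}=(L_1,L_\infty)^{\mathcal L}_{1-\frac1{p_0},1,E_0,\ell^{\alpha-1}(t),L_\infty}=B_{p_0,\alpha,E_0},$$
the last equality being the definition of the $B$-type space, which proves the first identity.

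For the second identity I would use the corollary on $(B_{p_0,\alpha,E_0},L_{p_1,\b_1,E_1})_{\theta,\b,E}$ in its case $\theta=1$, now taking $\b_1\equiv1$, the r.i.\ space in the role of its inner ``$E_1$'' to be $L_1$, $\b(t):=\ell^{\beta-1}(t)$, and its outer r.i.\ space to be $E_1$. The condition required there, $\|\b\|_{\widetilde{E}(0,1)}=\|\ell^{\beta-1}\|_{\widetilde{E}_1(0,1)}<\infty$, holds because $A_{p_1,\beta,E_1}$ is assumed well defined. The corollary then gives
$$\big(B_{p_0,\alpha,E_0},L_{p_1,1,L_1}\big)_{1,\ell^{\beta-1}(t),E_1}=(L_1,L_\infty)^{\mathcal R}_{1-\frac1{p_1},\,\ell^{\beta-1}\!\circ\rho,\,E_1,\,1,\,L_1},$$
where $\rho(u)=u^{\frac1{p_0}-\frac1{p_1}}\ell^{\alpha-1}(u)\varphi_{E_0}(\ell(u))$, $u\in(0,1)$, so it only remains to replace the weight $\ell^{\beta-1}\circ\rho$ by $\ell^{\beta-1}(t)$.

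This last step is the sole point requiring an argument. By its construction $\rho$ has the form $\rho(u)=u^{\delta}c(u)$ with $\delta=\tfrac1{p_0}-\tfrac1{p_1}>0$ and $c(u)=\ell^{\alpha-1}(u)\varphi_{E_0}(\ell(u))\in SV(0,1)$ (the factor $\varphi_{E_0}(\ell(u))=\|1\|_{\widetilde{E}_0(u,1)}$ is slowly varying by Lemma~\ref{lem1}~(ii), and products of slowly varying functions are slowly varying by Lemma~\ref{lem0}~(i)). Because $c\in SV(0,1)$ and $\delta>0$, the growth restrictions of Definition~\ref{def1} force $u^{3\delta/2}\lesssim\rho(u)\lesssim u^{\delta/2}$ on $(0,1)$, hence $\ell(\rho(u))\sim\ell(u)$ there, and therefore $\ell^{\beta-1}\circ\rho=(\ell\circ\rho)^{\beta-1}\sim\ell^{\beta-1}$ on $(0,1)$. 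Two equivalent slowly varying weights produce the same $\mathcal R$-space (the defining norms are directly comparable; cf.\ the remark preceding Corollary~\ref{713}), so the right-hand side above equals $(L_1,L_\infty)^{\mathcal R}_{1-\frac1{p_1},\ell^{\beta-1}(t),E_1,1,L_1}=A_{p_1,\beta,E_1}$, which is the second identity.

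I expect the only real obstacle to be bookkeeping: matching the parameter conventions of the two quoted corollaries — which r.i.\ space is inner and which is outer, and the ordering $\theta_0<\theta_1$ built into them — together with the elementary estimate $\ell^{\beta-1}\circ\rho\sim\ell^{\beta-1}$, which needs a touch of care since the factor $\varphi_{E_0}(\ell(u))$ in $\rho$ need not be bounded on $(0,1)$.
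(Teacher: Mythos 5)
Your proof is correct. The paper leaves this corollary unproved (it is presented as a direct consequence of the two preceding corollaries on pairs $(L_{p_0,\b_0,E_0},A_{p_1,\beta,E_1})$ and $(B_{p_0,\alpha,E_0},L_{p_1,\b_1,E_1})$), and your argument — specializing the first at $\theta=0$ with $\b\equiv1$ and the second at $\theta=1$ with $\b_1\equiv1$, $\b=\ell^{\beta-1}$, then replacing $\ell^{\beta-1}\circ\rho$ by $\ell^{\beta-1}$ via the slowly-varying bound $u^{3\delta/2}\lesssim\rho(u)\lesssim u^{\delta/2}$ on $(0,1)$, which gives $\ell\circ\rho\sim\ell$ — is exactly the intended route, and all the hypothesis checks you perform (in particular $\|\ell^{\beta-1}\|_{\widetilde{E}_1(0,1)}<\infty$ and $\|1\|_{\widetilde{E}_0(u,1)}\in SV(0,1)$) are the right ones.
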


Finally, ultrasymmetric spaces are interpolation spaces between $A$ and $B$-type spaces. The proof of the following corollary is similar to the proof of Theorem  \ref{thm58}.

\begin{cor}
Let $E,\, E_0,\, E_1$ be r.i. spaces,  $1< p_0<p_1< \infty$, $\alpha,\beta<1$ and assume that $(1+u)^{\beta-1}$ belongs to $E_1$. Consider the function $$\rho(u)=u^{\frac{1}{p_0}-\frac{1}{p_1}}\frac{\ell^{\alpha-1}(u)\varphi_{E_0}(\ell(u))}{\|\ell^{\beta-1}(t)\|_{\widetilde{E}_1(0,u)}},\qquad u\in(0,1).$$

\vspace{2mm}
a)  If $0<\theta< 1$, then
\begin{equation*}
\big(B_{p_0,\alpha,E_0},A_{p_1,\beta,E_1}\big)_{\theta,\b,E}=L_{p,\B_\theta,E},
\end{equation*}
where $\frac1{p}=\frac{1-\theta}{p_0}+\frac{\theta}{p_1}$ and $\B_\theta(u)=\big(\ell^{(\alpha-1)}(u)\varphi_{E_0}(\ell(u))\big)^{1-\theta}\|\ell^{\beta-1}(t)\|^{\theta}_{\widetilde{E}_1(0,u)}\b(\rho(u))$, $u\in(0,1)$.

\vspace{2mm}
b) If $\theta=0$, then
\begin{align*}
\big(&B_{p_0,\alpha,E_0},A_{p_1,\beta,E_1}\big)_{0,\b,E}\\ &=(L_1,L_\infty)^{\mathcal L}_{1-\frac{1}{p_0},\varphi_{E_0}(\ell(u))\b(\rho(u)),E,\ell^{\alpha-1}(u),L_\infty}\cap(L_{p_0,\ell^{\alpha-1}(u),L_\infty},A_{p_1,\beta,E_1})^{\mathcal L}_{0,\b(t\varphi_0(\ell(t)),E,1,E_0}.
\end{align*}

\vspace{2mm}
c) If $\theta=1$ and $b\in SV(0,1)$ is such that $\|\b\|_{\widetilde{E}(0,1)}<\infty$, then
\begin{align*}
\big(B_{p_0,\alpha,E_0},&A_{p_1,\beta,E_1}\big)_{1,\b,E}\\
&=(L_1,L_\infty)^{\mathcal R}_{1-\frac{1}{p_1},\B_1,E,1,L_1}\cap(L_1,L_\infty)^{\mathcal R,\mathcal R}_{1-\frac{1}{p_1},\b\circ\rho,E,\ell^{\beta-1}(u),E_1,1,L_1}\nonumber
\end{align*}
where $\B_1(u)=\|\ell^{\beta-1}(t)\|_{\widetilde{E}_1(0,u)}\b(\rho(u))$, $u \in(0,1)$.
\end{cor}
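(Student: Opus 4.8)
The plan is to follow, step by step, the proof of Theorem~\ref{thm58}. The pivotal ingredient, playing here the r\^ole that \eqref{e1716} played there, is the identity
$$B_{p_0,\alpha,E_0}=\big(L_{p_0,\ell^{\alpha-1}(t),L_\infty},A_{p_1,\beta,E_1}\big)_{0,1,E_0},$$
established in the corollary immediately preceding this one, together with its companion $A_{p_1,\beta,E_1}=\big(B_{p_0,\alpha,E_0},L_{p_1,1,L_1}\big)_{1,\ell^{\beta-1}(t),E_1}$. Writing both endpoints of the couple $\big(L_{p_0,\ell^{\alpha-1}(t),L_\infty},A_{p_1,\beta,E_1}\big)$ as interpolation spaces of $(L_1,L_\infty)$ through \eqref{eU} and the identification of $A$-type spaces as $\mathcal R$-spaces, this couple is, up to those identifications, of the form $\big(\overline{X}_{\theta_0,\b_0,E_0'},\overline{X}^{\mathcal R}_{\theta_1,\b_1,E_1,\a,F}\big)$ with $0<\theta_0=1-\tfrac1{p_0}<\theta_1=1-\tfrac1{p_1}<1$; hence the (already established) corollary that computes $\big(L_{p_0,\b_0,E_0},A_{p_1,\beta,E_1}\big)_{\theta,\b,E}$, used with $\b_0=\ell^{\alpha-1}$ and with the r.i.\ space $L_\infty$ in the position of $E_0$, records all of its interpolation spaces for every $\theta\in[0,1]$; likewise the corollary computing $\big(B_{p_0,\alpha,E_0},L_{p_1,\b_1,E_1}\big)_{\theta,\b,E}$ (now with $\b_1\equiv1$, $E_1=L_1$) handles the couple $\big(B_{p_0,\alpha,E_0},L_{p_1,1,L_1}\big)$.

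For $0<\theta<1$ and for $\theta=0$, I would substitute the first identity and reiterate: Theorem~5.12 (resp.\ Theorem~5.13) of \cite{FMS-2}, applied to the couple $\big((Z_0,Z_1)_{0,1,E_0},Z_1\big)$ with $Z_0=L_{p_0,\ell^{\alpha-1}(t),L_\infty}$ and $Z_1=A_{p_1,\beta,E_1}$, expresses the reiterated space as $\big(L_{p_0,\ell^{\alpha-1}(t),L_\infty},A_{p_1,\beta,E_1}\big)_{\theta,\widetilde\b,E}$ when $0<\theta<1$, and as the intersection of such a space with a residual $\mathcal L$-space over the same couple when $\theta=0$. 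Feeding in parts a) and b) of the preceding corollary, and reducing the weights by means of Lemma~\ref{lem1} together with the equivalence $\|1\|_{\widetilde{E}_0(u,1)}\sim\varphi_{E_0}(\ell(u))$ valid on $(0,1/2)$ — the origin of the factor $\varphi_{E_0}(\ell(u))$ in $\rho$ and of $\varphi^{1-\theta}_{E_0}(\ell(u))$ in $\B_\theta$ and $\B_0$ — then produces a) and b). For $\theta=1$, which (as in Theorem~\ref{thm58}) is the most delicate case, I would instead exploit the companion identity, writing
$$\big(B_{p_0,\alpha,E_0},A_{p_1,\beta,E_1}\big)_{1,\b,E}=\Big(B_{p_0,\alpha,E_0},\big(B_{p_0,\alpha,E_0},L_{p_1,1,L_1}\big)_{1,\ell^{\beta-1}(t),E_1}\Big)_{1,\b,E},$$
and reiterating with the first endpoint fixed (again by \cite{FMS-2}, after a symmetry reduction via Lemma~\ref{symLR}): this presents the space as the intersection of $\big(B_{p_0,\alpha,E_0},L_{p_1,1,L_1}\big)_{1,\widetilde\b,E}$ — which part c) of the corollary on $\big(B_{p_0,\alpha,E_0},L_{p_1,\b_1,E_1}\big)_{\theta,\b,E}$ with $\b_1\equiv1$, $E_1=L_1$ identifies as $(L_1,L_\infty)^{\mathcal R}_{1-1/p_1,\B_1,E,1,L_1}$ — with an $\mathcal R,\mathcal R$-remainder, and one checks that this remainder is exactly $(L_1,L_\infty)^{\mathcal R,\mathcal R}_{1-1/p_1,\b\circ\rho,E,\ell^{\beta-1}(u),E_1,1,L_1}$. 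The standing assumptions — $(1+u)^{\beta-1}\in E_1$, and $\|\b\|_{\widetilde{E}(0,1)}<\infty$ in case c) — ensure that every space occurring is nontrivial and that the Holmstedt formulae of \S\ref{sereiteration} and of \cite{FMS-2} apply.

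The main obstacle is, exactly as in Theorem~\ref{thm58}, computational rather than conceptual: one must transport the slowly varying weights correctly through two successive compositions with Holmstedt scales (which, by Lemma~\ref{Le51}, perturb them only up to equivalence), keep track of the fundamental function $\varphi_{E_0}$ entering through $\|1\|_{\widetilde{E}_0(u,1)}$, and verify that the residual $\mathcal L,\mathcal L$- (for $\theta=0$) and $\mathcal R,\mathcal R$- (for $\theta=1$) terms produced by the non-limiting reiteration of \cite{FMS-2} collapse to precisely the $\mathcal L,\mathcal L$- and $\mathcal R,\mathcal R$-spaces listed in the statement. Systematic use of Lemma~\ref{lem1} and of the logarithmic equivalences \eqref{einfty1}-\eqref{einfty2} reduces all of this to routine bookkeeping.
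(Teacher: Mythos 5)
Your blueprint is essentially the one the paper itself has in mind — the paper's entire ``proof'' is the sentence that the argument ``is similar to the proof of Theorem \ref{thm58}'' — and for $0<\theta<1$ and $\theta=0$ you reproduce it faithfully: insert the identity $B_{p_0,\alpha,E_0}=\big(L_{p_0,\ell^{\alpha-1}(t),L_\infty},A_{p_1,\beta,E_1}\big)_{0,1,E_0}$, reiterate with Theorems~5.12/5.13 of \cite{FMS-2}, and then feed in parts a) and b) of the immediately preceding corollary for $\big(L_{p_0,\b_0,E_0},A_{p_1,\beta,E_1}\big)_{\theta,\b,E}$ with $\b_0=\ell^{\alpha-1}$ and $L_\infty$ in the $E_0$-slot, recovering the factor $\varphi_{E_0}(\ell(u))$ from $\|1\|_{\widetilde{E}_0(u,1)}$ exactly as you describe. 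So parts a) and b) are done the paper's way.

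For $\theta=1$ you depart from the paper's route. Theorem \ref{thm58} treats $\theta=1$ by the \emph{same} left-endpoint reduction as for the other values of $\theta$ and then invokes the $\theta=1$ clause of the auxiliary corollary (there \eqref{e1715}, here part c) of the $\big(L_{p_0,\b_0,E_0},A_{p_1,\beta,E_1}\big)$ corollary), which already delivers the answer as an intersection of an $\mathcal R$-space and an $\mathcal R,\mathcal R$-space over $(L_1,L_\infty)$ with the correct $\rho$; following the paper's pattern this lands directly on the displayed formula. You instead insert the companion identity $A_{p_1,\beta,E_1}=\big(B_{p_0,\alpha,E_0},L_{p_1,1,L_1}\big)_{1,\ell^{\beta-1}(t),E_1}$ and reiterate with the left endpoint frozen, after a symmetry reduction by Lemma~\ref{symLR}. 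This is conceptually fine and uses the same toolbox, but it has a practical drawback you only gesture at with ``one checks'': the $\mathcal R,\mathcal R$-remainder that this reiteration produces lives, a priori, over the intermediate couple $\big(B_{p_0,\alpha,E_0},L_{p_1,1,L_1}\big)$, not over $(L_1,L_\infty)$, so you still need to insert a Holmstedt formula for $K(\cdot,f;B_{p_0,\alpha,E_0},L_{p_1,1,L_1})$ and untangle the nested norms before it becomes $(L_1,L_\infty)^{\mathcal R,\mathcal R}_{1-1/p_1,\b\circ\rho,E,\ell^{\beta-1}(u),E_1,1,L_1}$. The paper's uniform left-endpoint reduction avoids this extra layer because the auxiliary corollary is already stated over $(L_1,L_\infty)$; your route is more symmetric but strictly longer for case c). Both approaches are valid, and the choice does not affect a) or b).
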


\
 
{\small \hspace{-5mm}{\textbf{Acknowledgments.}}
The authors have been partially supported by grant MTM2017-84058-P (AEI/FEDER, UE).
The second author also thanks the Isaac Newton Institute for Mathematical Sciences, Cambridge, for support and hospitality during the programme \emph{Approximation, Sampling and Compression in Data Science} where the work on this paper was undertaken; this work was supported by EPSRC grant no. EP/R014604/1. Finally, the second author thanks Óscar Domínguez for useful conversations at the early stages of this work, and for pointing out the reference \cite{FFGKR}. }

\end{document}